\newcounter{thmintrocnt}
\numberwithin{equation}{section}
\theoremstyle{plain}
\newtheorem{theorem}[equation]{Theorem}
\crefname{theorem}{Theorem}{Theorems}
\newtheorem*{maintheorem}{Main Theorem}
\newtheorem{lemma}[equation]{Lemma}
\crefname{lemma}{Lemma}{Lemmas}
\newtheorem{proposition}[equation]{Proposition}
\crefname{proposition}{Proposition}{Propositions}
\crefname{claim}{Claim}{Claims}
\crefname{corollary}{Corollary}{Corollaries}
\theoremstyle{definition}
\crefname{definition}{Definition}{Definitions}
\theoremstyle{remark}
\crefname{convention}{Convention}{Conventions}
\newtheorem{remark}[equation]{Remark}
\crefname{remark}{Remark}{Remarks}
\crefname{notation}{Notation}{Notations}
\crefname{example}{Example}{Examples}
\crefname{question}{Question}{Questions}
\theoremstyle{plain}
\newcommand*{\MRref}[2]{\href{https://urldefense.com/v3/__http://www.ams.org/mathscinet-getitem?mr=*1*7D*7BMR__;IyUl!!PDiH4ENfjr2_Jw!C47ExQkO68EbYXdtff8bBZJdjVCYroxIJFsr-LDsshlAaC0WsKnfaYMjlF5Ltq-IKJekDjH_IQSpLcNcs5owNtiF_mJ1LpmqRDOjPlE$ [ams[.]org] #1}}
\newcommand*{\arxiv}[1]{\href{https://urldefense.com/v3/__http://www.arxiv.org/abs/*1*7D*7BarXiv__;IyUl!!PDiH4ENfjr2_Jw!C47ExQkO68EbYXdtff8bBZJdjVCYroxIJFsr-LDsshlAaC0WsKnfaYMjlF5Ltq-IKJekDjH_IQSpLcNcs5owNtiF_mJ1LpmqRL2_0so$ [arxiv[.]org]: #1}}
\newcommand*{\CC}{\mathbb{C}}             
             \newcommand*{\FF}{\mathbb{F}}
             \newcommand*{\NN}{\mathbb{N}}
             \newcommand*{\ZZ}{\mathbb{Z}}
            \newcommand*{\CB}{\mathcal{B}}
\newcommand*{\CCC}{\mathcal{C}}           
\newcommand*{\CG}{\mathcal{G}}
\newcommand*{\CU}{\mathcal{U}}
\newcommand*{\fa}{\mathfrak{a}}            \newcommand*{\fb}{\mathfrak{b}}
            \newcommand*{\ff}{\mathfrak{f}}
\newcommand*{\fg}{\mathfrak{g}}
\newcommand*{\cstar}{\texorpdfstring{\(C^*\)\nobreakdash-\hspace{0pt}}{*-}}
\newcommand*{\Star}{\texorpdfstring{\(^*\)\nobreakdash-\hspace{0pt}}{*-}}
\newcommand*{\supp}[1]{\text{supp}\left(#1\right)}
\newcommand*{\red}{\rm red}
\newcommand*{\full}{\rm max}
\newcommand*{\ess}{\rm ess}
\newcommand*{\alg}{\rm alg}
\newcommand*{\triv}{\rm triv}
\newcommand*{\pitriv}{\pi_{\triv}}
\newcommand*{\grpd}{\CG}
\newcommand*{\rest}{r}
\newcommand*{\rednorm}[1]{\norm{#1}_{\red}}
\newcommand*{\supnorm}[1]{\norm{#1}_{\infty}}
\newcommand*{\cont}[1]{C(#1)}
\newcommand*{\contc}[1]{C_c(#1)}
\newcommand*{\contz}[1]{C_0(#1)}
\newcommand*{\redalg}[1]{C^*_{\red}(#1)}              
\newcommand*{\maxalg}[1]{C^*_{\full}(#1)}             
\newcommand*{\essalg}[1]{C^*_{\ess}(#1)}              
\newcommand*{\algalg}[1]{\CCC_c(#1)}
\newcommand*{\steinalg}[1]{\CC(#1)}
\newcommand*{\stein}[1]{\CC(#1)}
\newcommand*{\singideal}{J}
\newcommand*{\asingideal}{\singideal_{\alg}}
\newcommand*{\borelb}[1]{B_b(#1)}
\newcommand{\ol}{\overline}
\newcommand*{\spn}{\operatorname{span}}
\newcommand{\normal}{\vartriangleleft}
\newcommand{\id}{\operatorname{id}}
\newcommand{\editB}[1]{{\color{black}#1}}
\begin{document}

\title[Algebraic singular functions are not always dense in the $C^*$-singular ideal]{Algebraic singular functions are not always dense in the ideal of $C^*$-singular functions}

\author[Diego Mart\'{i}nez]{Diego Mart\'{i}nez}
\address{Department of Mathematics, KU Leuven, Celestijnenlaan 200B, 3001 Leuven, Belgium.}
\email{diego.martinez@kuleuven.be}

\author[N\'ora Szak\'acs]{N\'ora Szak\'acs}
\address{Department of Mathematics, University of Manchester, Manchester M13 9PL, United Kingdom.}
\email{nora.szakacs@manchester.ac.uk}

\begin{abstract}
  We give the first examples of \'etale (non-Hausdorff) groupoids $\grpd$ whose \cstar{}algebras contain singular elements that cannot be approximated by singular elements in \(\algalg \grpd\).
  We provide two examples: one is a bundle of groups, and the other a minimal and effective groupoid constructed from a self-similar action on an infinite alphabet. Moreover, we also prove that the Baum--Connes assembly map for the first example is not surjective, not even on the level of its essential \cstar{}algebra.
\end{abstract}

\subjclass[2020]{46L55, 46L06, 20M18}

\keywords{Groupoid \cstar{}algebra; Essential \cstar{}algebra; Singular ideal}

\thanks{The first-named author was funded by project G085020N funded by the Research Foundation Flanders (FWO). Both authors would like to thank the Isaac Newton Institute for Mathematical Sciences, Cambridge, for support and hospitality during the programme Topological groupoids and their C*-algebras, where work on this paper was undertaken. This work was supported by EPSRC grant EP/V521929/1.}

\maketitle


\section{Introduction} \label[section]{sec:intro}

\cstar{}algebras associated to topological (particularly \'etale) groupoids play an important role in operator algebras. On the one hand, they constitute a broad class that encompasses a wide array of important examples \cites{Renault2008CartanSI,ExelBigPaper,li-classifiable-cartans-2020, neshveyevetal2023} and, on the other, they remain tractable through common techniques \cites{Renault80, SimsNotes2020, ExelPitts}.

Although \cstar{}algebras of non-Hausdorff étale groupoids were introduced by Connes in the early 80s, up to the late 2010s, most research focused on the \emph{Hausdorff} setting \cite{SimsNotes2020}, which is in many ways easier to understand. Nevertheless, non-Hausdorff groupoids arise naturally in important examples, notably in the context of dynamical systems, (\emph{e.g.}\ \cites{NekCrelle, ExelPardoSelfSim, Whittaker:ssgrpd}), which has motivated substantial progress in their study over the past decade \cites{CEPSS-2019,bkmk-2024-twited-grpds, buss-martinez-2025, Timmermann:2011:fell-compactification, BussMartinez:Approx-Prop, KwasniewskiMeyer-essential-cross-2021, Exel:nonHDgrpds}.

The main difficulty in the non-Hausdorff setting is that the groupoid \cstar{}algebra contains non-continuous functions. 
This is in particular apparent in the characterization of simplicity of these \cstar{}algebras. In the Hausdorff case, simplicity is ensured by dynamical properties of the groupoid called \emph{minimality} and \emph{effectiveness} \cite{brownetal-simplicity}, which are typically easy to verify on concrete examples. In the non-Hausdorff setting, one needs to impose the additional condition that the ($C^*$)-\emph{singular ideal}\footnote{\, The \cstar{} prefix is added here to distinguish it from its purely algebraic counterpart.} $\singideal$ is zero \cite{CEPSS-2019}, which is not implied by minimality and effectiveness \cite{BenNora21}. This ideal $\singideal$ consists of those functions that vanish on a dense subset (called \emph{singular functions}), and it is often difficult to concretely describe, or even to check if $J=0$.

The quotient of the groupoid \cstar{}algebra by the singular ideal $J$ is called the \emph{essential \cstar{}algebra}, introduced in \cites{ExelPitts,KwasniewskiMeyer-essential-cross-2021}. 
It is now widely accepted to be the groupoid \cstar{}algebra with the ``correct'' ideal structure \cites{CEPSS-2019, KennedyKimLiRaumUrsu2021}, and is the focus of a rapidly expanding body of research. Understanding the singular ideal thus becomes relevant and important both in the characterization of simplicity of groupoid \cstar{}algebras as well as in the study of essential \cstar{}algebras.

Much of the research on étale groupoid \cstar{}algebras goes hand in hand with the study of their discrete analogues introduced by Steinberg in \cite{BenSteinAlgPaper}. If $\grpd$ is an \emph{ample} groupoid, then its Steinberg algebra $\steinalg\grpd$ sits densely in the groupoid \cstar{}algebra $\redalg\grpd$ and often mirrors its algebraic properties. In particular, if $\grpd$ is Hausdorff and amenable, then the simplicity of $\steinalg\grpd$ and that of $\redalg\grpd$ coincide. In the non-Hausdorff case, \editB{the new obstruction to the simplicity of  $\steinalg\grpd$ is the so-called \emph{(algebraic) singular ideal}, defined as $J \cap \steinalg\grpd$. This naturally brings about the question of what the relationship between $J \cap \steinalg\grpd$ and $J$ is (beyond the obvious inclusion $\overline{J \cap \steinalg\grpd} \subseteq J$), which was first raised in \cite{CEPSS-2019}:}


\begin{enumerate}
  \item[Q1.] Does $J \cap \steinalg\grpd=0$ imply $J=0$ (at least in the minimal and effective case)?
  \item[Q2.] Is $J \cap \steinalg\grpd$ dense in the \cstar{}singular ideal $J$ (at least in the minimal and effective case)?
\end{enumerate}
These questions are especially interesting as they determine to what extent one can hope to understand  $J$ by studying $J \cap \steinalg\grpd$. The latter is often easier, not just \emph{a priori} but also in practice: for instance, when it comes to characterizing the simplicity of concrete non-Hausdorff groupoid \cstar{}algebras, the most successful approach has been to characterize when $J \cap \steinalg\grpd=0$, and showing it implied $J=0$ in that concrete setting \cites{BenNewSelfsim, Josiah}.

Until this point, all progress on the above questions was pointing to a positive answer. The first results focused on groupoids arising from self-similar group actions as defined in \cite{Nekrashevych2009}: \cite{Yosh} implied Q1 holds in the case of so-called \emph{multispinal groups}, which \cite{Nadiaetal} reproved for the subclass of $\ZZ_2$-multispinal groups using different techniques. In \cite{BenNewSelfsim}, this is extended to all \emph{contracting} self-similar groups.

The first significant step towards the general case was \cite{hausdorff-covers-2025}*{Thm. C} which shows that Q1 holds in a class of groupoids whose `non-Hausdorffness' is finitary in the sense that every net has (at most) a finite number of limits. This is extended by \cite{Hume2025}, which reduces Q1 to the isotropy group $C^*$-algebras and formulates a sufficient condition on these. 
\editB{Since the publication of our preprint, a new paper \cite{GH2025} has been published, which shows the stronger property Q2 for a large class of groupoids, including those with finitary `non-Hausdorffness', as well as second-countable amenable bundles of groups.}

In this paper, we present the first negative result in this direction of research by answering Q2 negatively. 

\begin{maintheorem} 
  There exist \'etale (in fact, ample) groupoids \(\grpd\) such that \(J \cap \algalg{\grpd}\) is not dense in \(\singideal\), and thus in particular $J \cap \steinalg\grpd$ is not dense in $J$. 
\end{maintheorem}

We construct two (classes of) examples showcasing the pathological behaviour above: the first one, presented in \cref{sec:bundle-groups}, is a (non-amenable) bundle of groups, and the second in \cref{sec:minimal-effective} is a minimal and effective groupoid associated to a self-similar group acting on an infinite alphabet. These constructions can be read independently from each other, but both follow the same basic strategy, which we describe in \cref{sec:strategy}. 

We remark that these examples do have algebraic singular functions in the Steinberg algebra, \emph{i.e.}\ \(\stein{\grpd} \cap \singideal \neq 0\), thus Q1 remains open. They are also non-amenable, in particular their reduced \cstar{}algebras are not nuclear (see \cite{buss-martinez-2025}*{Theorem 5.11}).\footnote{\, In fact, \(\essalg{\grpd}\) is not nuclear either for either of these examples, but this is beyond the scope of the current paper.} It would be interesting to know what happens in the nuclear case, in particular whether the result of \cite{GH2025}*{Cor. D} proving Q2 for amenable bundles of groups extends to amenable groupoids.

As a short digression from the main goal of the paper, we also show that the examples in \cref{sec:bundle-groups} are counterexamples to the Baum--Connes Conjecture (see \cref{thm:baum-connes}). In fact, the ``essential assembly map'' \(\mu_{\grpd}^{\ess} \colon K^{\rm top}_*\left(\grpd\right) \to K_*\left(\essalg{\grpd}\right)\) is not surjective. This improves on previous results of Higson, Lafforgue and Skandalis~\cite{higson-lafforgue-skandalis}*{Section 3}. 

\smallskip

\textbf{Acknowledgements:} We are grateful to Julian Gonzales for pointing us to \cite{Thom}. Moreover, the second-named author would like to thank Benjamin Steinberg and Chris Bruce for helpful conversations on topics related to the text.

\section{Preliminaries} \label[section]{sec:prelim}

\subsection{Ample groupoids} \label{subsec:grpds}
We refer the reader to \cite{SimsNotes2020} for a comprehensive survey on Hausdorff groupoids, and to \cites{ExelBigPaper,buss-martinez-2025} (and references therein) for similar introductions to non-Hausdorff ones.
As usual in the field, we view a groupoid $\grpd$ as a set of morphisms by identifying the objects with the respective identity morphisms, which we call \emph{units} and denote by $\grpd^{(0)}$. Then the \emph{source} and \emph{range} maps are respectively defined to be \(s \colon \gamma \mapsto \gamma^{-1} \gamma\) and \(r \colon \gamma \mapsto \gamma \gamma^{-1}\).
A \emph{topological} groupoid is endowed with a topology such that the multiplication, inverse, source and range maps are all continuous. In this paper we will always assume that $\grpd^{(0)}$ is Hausdorff and locally compact, however we do \emph{not} require \(\grpd\) to be globally a Hausdorff space.
A subset \(B \subseteq \grpd\) is an \editB{open} \emph{bisection} if both the source and range maps restrict to homeomorphisms from \(B\) onto their \editB{open} images in \(\grpd^{(0)}\).
The groupoid \(\grpd\) is \emph{\'etale} if its topology has a basis of open bisections, and is \emph{ample} if these bisections can also be assumed to be compact.

Throughout the paper, given \(A, B \subseteq \grpd\) we let
\begin{equation} \label[equation]{eq:convention about AB}
    A B \coloneqq \left\{ab : a \in A, b \in B \text{ and } (a, b) \in \grpd^{(2)}\right\}.
\end{equation}
Likewise, if \(A = \{a\}\) is a singleton then we define \(a B \coloneqq \{a\} B\) and, similarly, \(B a \coloneqq B \{a\}\).
In particular, \(\grpd w\) denotes all the arrows in \(\grpd\) starting at \(w \in \grpd^{(0)}\), and \(w\grpd w\) is the \emph{isotropy group} at \(w\) (necessarily discrete when $\grpd$ is étale). The set of isotropy groups forms a subgroupoid in $\grpd$ denoted by $\operatorname{Iso} \grpd$.
The subgroupoid $\grpd w \grpd$ is called the \emph{orbit} of $w$.
An étale groupoid is \emph{minimal} if all orbits are dense in the unit space, and \emph{effective} if $\operatorname{int}(\operatorname{Iso} \grpd)=\grpd^{(0)}$.

\subsection{Groupoids associated to self-similar groups} \label{subsec:selfsim_prelim}

The example in \cref{sec:minimal-effective} is a groupoid associated to a so-called self-similar group acting on a countably infinite alphabet. We provide a brief introduction to such groupoids here, see for example \cite{BenNora21}*{Section 6.1} for a more detailed one -- we also follow the notation of that paper.

Let $X$ be an infinite set (called the alphabet) and let $X^*$ and $X^\omega$ denote the finite and (right) infinite words in $X$ respectively. 
A \emph{self-similar group} on $X$ is a group $G$ together with a faithful, length-preserving action on $X^*$ such that for all $g \in G$ and $x \in X$, there exists some $h \in G$ with $g(x\gamma)=g(x)h(\gamma)$ for all $\gamma \in X^*$. In this case we denote $h$ by $g|_x$, a notation we extend to any finite word $\alpha=x_1\ldots x_n$ by $g|_\alpha:=g|_{x_1}|_{x_2}\ldots |_{x_n}$, so that we have $g(\alpha\gamma)=g(\alpha)g|_\alpha(\gamma)$ for all $\gamma \in X^*$. The action of $G$ on $X^*$ naturally extends to $X^\omega$ 
and more generally satisfies $g(\alpha w)=g(\alpha)g|_\alpha(w)$ for all $\alpha \in X^*$ and $w \in X^* \cup X^\omega$.

Recall that a semigroup \(S\) is called \emph{inverse} if for all \(s \in S\) there is a unique element \(s^* \in S\) such that \(ss^*s = s\) and \(s^*ss^* = s^*\).
We refer the reader to \cite{Lawson} for a comprehensive introduction to inverse semigroups, or to \cite{LawsonPrimer} for a shorter one. An inverse \emph{monoid} is an inverse semigroup with an identity element $1 \in S$.

There is an inverse monoid $S$ associated to the self-similar group $G$ acting on $X^*$ (see \cite{BenNora21}*{Section 6.1}). This \(S\) consists of elements of the form
\[
S \coloneqq \left\{\alpha g \beta^\ast \colon \alpha, \beta \in X^\ast, g \in G\right\} \cup \{0\},
\]
where $(x_1\ldots x_n)^*=x_n^* \ldots x_1^*$ should be thought of as a formal inverse of $x_1 \ldots x_n \in X^*$.\footnote{\, Let us hold a moment of silence for the unfortunate convention of $*$ being used both to denote the free monoid as well as adjoint/inverses, preventing us from introducing a notation for formal inverses of words in $X^{*}$.}
We view $G$ and $X^*$ as subsets of $S$ via the embeddings $G \ni g \mapsto \epsilon g \epsilon^*$ and $X \ni x \mapsto x1_G\epsilon^*$ where $\epsilon$ denotes the empty word. In particular $\epsilon1_G\epsilon^*=\epsilon=\epsilon^*=1_G$ is the identity element of $S$, denoted by $1$.
The multiplication on $S$ extends the multiplication on $G$ and juxtaposition on $X^*$, and
for any $x,y \in X$ and $g \in G$, we define
$$x^*y \coloneqq \delta_{x,y},\ gx \coloneqq g(x)g|_x\ \hbox{ and } x^*g \coloneqq g|_{g^{-1}(x)} (g^{-1}(x))^*.$$
It follows that for any $\alpha \in X^*$, we have the equality $g\alpha=g(\alpha)g|_\alpha$ in $S$, which we will frequently use. The inverse of $\alpha g \beta^*$ is $\beta g^{-1} \alpha^*$.

The action of $G$ on $X^* \cup X^\omega$ naturally extends to an action of $S$ by the partial homeomorphisms
\begin{align*}
  \alpha g \beta^* \colon D(\beta) & \to D(\alpha), \\
  \beta w &\mapsto \alpha g(w),
\end{align*}
where $D(\gamma) \coloneqq \gamma X^* \cup \gamma X^\omega$.
This is in fact none other than the spectral action of $S$ on the character space of its idempotent semilattice.
\editB{Because the action of \(G\) on \(X^*\) is assumed faithful, t}he groupoid $\grpd$ associated to the self-similar group $G$ is the groupoid of germs of this action (and thus the universal groupoid of $S$ -- see \cite{PatersonGroupoids}), which we also describe explicitly:
$$\grpd=\{(\alpha g \beta^*,w)\colon w \in D(\beta)\}/\sim$$
where \((t_1, w) \sim (t_2, w)\) if there is some $\gamma \in X^*$ such that \(w \in D(\gamma)\) and \(t_1 \gamma = t_2 \gamma\).
The class of $(t,w)$ is called a \emph{germ} and is denoted by $[t,w]$. The unit space of \(\grpd\) is $\{[1,w] \colon w \in X^* \cup X^\omega\}$, which is naturally identified with $X^* \cup X^\omega$. The source and range maps are then given as $s([t,w])=w$, $r([t,w])=t(w)$, and the groupoid operations are defined as \([t_2,t_1(w)] [t_1, w] = [t_2t_1,w]\) and \([t_1,w]^{-1} = [t_1^*, t_1(w)]\) for all \([t_2,t_1(w)], [t_1,w] \in \grpd\). 

A basis of compact open  sets in $\grpd^{(0)}$ consists of sets of the form $D(\alpha) \cap D(\alpha\beta_1)^c \cap \dots \cap D(\alpha\beta_n)^c$  where $\alpha, \beta_1, \ldots, \beta_n \in X^*$. Setting $(\alpha g \beta^*, U)=\{[\alpha g \beta^*, w]\colon w \in U\}$, a basis of compact open bisections of $\grpd$ is 
$$\{(\alpha g \beta^*, U) \colon \alpha g \beta^* \in S,\ U \subseteq D(\beta) \hbox{ is a basic compact open set in } X\}.$$
In particular $\grpd$ is an ample groupoid.
Given $s=\alpha g \beta^*$, we sometimes denote $D(\beta)$ by $D(s^*s)$ (which coincides with the usual meaning of $D(s^*s)$ in universal groupoids), in particular $(s, D(s^*s))=(\alpha g \beta^*, D(\beta))$ is the canonical compact open bisection associated to $s$.

\subsection{Algebras associated to groupoids} \label{subsec:algs}
Let \(\grpd\) be a fixed ample groupoid with locally compact Hausdorff unit space \(\grpd^{(0)}\).
Recall that given a function $\ff \colon \grpd \to \CC$, 
\[
  \supp{\ff} \coloneqq \left\{\gamma \in \grpd : \ff\left(\gamma\right) \neq 0\right\}
\]
denotes its (\editB{strict\footnote{\, Some sources -- keeping the convention from continuous functions -- refer to this as the open support.}}) \emph{support}.
Let $B_c(\grpd)$ denote the set of bounded Borel measurable functions $\grpd \to \CC$ whose support is contained in some compact set.\footnote{\, This is \emph{different} from saying that the closed support is compact, as \(\grpd\) may fail to have an abundance of closed compact sets.}
The \emph{Steinberg algebra of \(\grpd\)}, denoted by \(\stein \grpd\), is
\[
  \stein \grpd \coloneqq \spn \left\{\chi_U : U \subseteq \grpd \text{ compact open bisection}\right\} \subseteq B_c(\grpd).
\]
The \emph{algebra associated to \(\grpd\)}, denoted by \(\algalg \grpd\), is
\[
  \algalg \grpd \coloneqq \spn \left\{\ff \in \contc{U} : U \subseteq G \text{ open bisection}\right\} \subseteq B_c(\grpd),
\]
where \(\contc{U}\) denotes the set of functions \(U \to \CC\) that are continuous and compactly supported.
One should note that in both formulas above we tacitly extend functions \(\ff \colon U \to \CC\) supported on an open bisection \(U \subseteq \grpd\) to all of \(\grpd\) by extending by \(0\) outside of \(U\).
Observe that \(\stein \grpd \subseteq \algalg \grpd\) by definition.
Both these sets form complex algebras with respect to pointwise addition and a \emph{convolution product} defined by
$$(f \ast g)(\gamma) \coloneqq \sum_{\alpha\beta=\gamma} f(\alpha)g(\beta).$$ 
In particular, observe that $\chi_U \ast \chi_V=\chi_{UV}$ for any compact open bisections $U, V$, and $f \ast \chi_U=f|_{\grpd U}$ for any compact open set $U \subseteq \grpd^{(0)}$.

The following results provide a useful description of the algebras \(\algalg \grpd\) and \(\stein \grpd\).
\begin{proposition}[see \cite{BenSteinAlgPaper}*{Theorem 5.3}] \label[proposition]{prop:stein description}
  If \(S\) is an inverse semigroup with zero and \(\grpd\) its universal groupoid, then \(\stein\grpd = \spn \{\chi_{(s,D(s^*s))} \colon s \in S\setminus \{0\}\}\).
\end{proposition}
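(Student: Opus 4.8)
The plan is to verify the two inclusions separately. The inclusion $\spn\{\chi_{(s,D(s^*s))} : s \in S\setminus\{0\}\} \subseteq \stein\grpd$ is immediate: each $(s,D(s^*s))$ is by construction a compact open bisection of $\grpd$, so $\chi_{(s,D(s^*s))} \in \stein\grpd$ by definition.

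For the reverse inclusion it suffices to show that $\chi_K \in \spn\{\chi_{(t,D(t^*t))}\}$ for every compact open bisection $K \subseteq \grpd$. Recall that the sets $(s,V)$ with $s \in S$ and $V \subseteq D(s^*s)$ compact open form a basis of compact open bisections of the universal groupoid, so we may cover $K$ by finitely many such $(s_1,V_1),\dots,(s_n,V_n) \subseteq K$. The one point where the possible non-Hausdorffness of $\grpd$ must be handled is the following: $K$ is itself Hausdorff, being homeomorphic via the source map to a subset of $\grpd^{(0)}$; hence each compact $(s_i,V_i)$ is closed in $K$, and for every $\emptyset \neq I \subseteq \{1,\dots,n\}$ the intersection $\bigcap_{i\in I}(s_i,V_i)$ is closed in the compact set $(s_{i_0},V_{i_0})$ (for any fixed $i_0 \in I$), hence compact, and open, so it is a compact open bisection; moreover, since the source map is injective on $(s_{i_0},V_{i_0})$, this intersection equals $(s_{i_0},W_I)$, where $W_I \subseteq V_{i_0} \subseteq D(s_{i_0}^*s_{i_0})$ is the (compact open) image of $\bigcap_{i\in I}(s_i,V_i)$ under the source map. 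By inclusion--exclusion, $\chi_K$ is an integer combination of the functions $\chi_{(s_{i_0},W_I)}$, so we are reduced to treating $\chi_{(s,W)}$ for $s \in S$ and $W \subseteq D(s^*s)$ compact open.

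To do so, I would use that the compact open subsets of the unit space $\grpd^{(0)}$ form a Boolean ring generated, as a ring of sets, by the sets $D(e)$, $e \in E(S)$, which are closed under intersection since $D(e)\cap D(f)=D(ef)$; thus $\chi_W = \sum_k c_k\chi_{D(e_k)}$ for some $c_k \in \ZZ$ and $e_k \in E(S)$, which we may take to satisfy $e_k \leq s^*s$ as $W \subseteq D(s^*s)$. Using $f\ast\chi_U = f|_{\grpd U}$ for compact open $U \subseteq \grpd^{(0)}$, we get $\chi_{(s,W)} = \chi_{(s,D(s^*s))}\ast\chi_W = \sum_k c_k\,\chi_{(s,D(s^*s))}\ast\chi_{D(e_k)}$. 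Finally, a direct germ computation gives $\chi_{(s,D(s^*s))}\ast\chi_{D(e)} = \chi_{(se,\,D((se)^*(se)))}$, interpreted as $0$ when $se=0$: indeed $(s,D(s^*s))\cdot D(e) = (s,D(s^*se))$, and for $w \in D(s^*se) \subseteq D(e)$ one has $[s,w]=[se,w]$ because $se=(se)e$ and $w \in D(e)$, while $(se)^*(se)=s^*se$. Hence $\chi_{(s,W)}$, and therefore $\chi_K$, is an integer combination of generators $\chi_{(t,D(t^*t))}$, which finishes the proof.

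The main obstacle here is organizational rather than substantive: in the absence of a global Hausdorff property one must be careful that finite intersections of compact sets stay compact --- which is why the disjointification is performed inside the single (Hausdorff) bisection $K$ --- and one must combine the Boolean-algebra reduction of a compact open subset of $\grpd^{(0)}$ to the basic sets $D(e)$ with the inverse-semigroup bookkeeping that keeps products of the canonical bisections $(s,D(s^*s))$ within the claimed spanning set.
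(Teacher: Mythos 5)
Your argument is correct. Note that the paper gives no proof of this statement at all --- it is quoted directly from Steinberg's work (Theorem~5.3 of the cited reference) --- so there is nothing internal to compare against; what you have written is essentially the standard argument, and it is the same route Steinberg takes. The two points that actually require care are exactly the ones you isolate: (i) finite intersections of compact opens need not be compact in a non-Hausdorff groupoid, so the inclusion--exclusion must be performed inside a single bisection \(K\), which is Hausdorff because the source map restricts to a homeomorphism onto a subset of \(\grpd^{(0)}\); and (ii) the reduction from an arbitrary compact open \(W \subseteq D(s^*s)\) to the sets \(D(e)\), using that the compact opens of the unit space of the universal groupoid form the generalized Boolean algebra generated by the \(D(e)\) (with \(D(e)\cap D(f)=D(ef)\)), followed by the identity \(\chi_{(s,D(s^*s))}*\chi_{D(e)}=\chi_{(se,D((se)^*(se)))}\), which you verify correctly via \(se=(se)e\) and \((se)^*(se)=s^*se\). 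I see no gap; the only cosmetic remark is that the bound \(e_k\leq s^*s\) is not actually needed for the final step, since \(se_k\) is a legitimate element of \(S\) (possibly \(0\), in which case the corresponding term vanishes) in any case.
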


\begin{proposition}[see \cite{ExelBigPaper}*{Proposition 3.10}] \label[proposition]{prop:alg alg description}
  If \(\grpd\) is an ample groupoid and \(\CU\) is a cover of \(\grpd\) consisting of compact open bisections, then \(\algalg\grpd = \spn \{\contc{U} \colon U \in \CU\}\).
\end{proposition}

One can see \editB{elements of} both \(\stein \grpd\) and \(\algalg \grpd\) as operators via the canonical ``left regular representations'' associated to a unit \(w \in \grpd^{(0)}\), \emph{i.e.}\ the representation defined by
\begin{align*}
  \lambda_w \colon \algalg{\grpd} & \to \CB\left(\ell^2\left(\grpd w\right)\right), \\
  \lambda_w\left(\ff\right) (\delta_\gamma) & \coloneqq \sum_{\alpha \in \grpd \gamma \gamma^{-1}} \ff\left(\alpha\right) \delta_{\alpha\gamma}
\end{align*}
for all \(\delta_\gamma \in \ell^2(\grpd w)\). (Recall that, under our convention, \(\grpd w\) denotes all the elements \(\gamma \in \grpd\) whose source is \(w\).)
It is routine to show that \(\lambda_w\) is a representation for all \(w \in \grpd^{(0)}\). In fact, if \(\grpd\) is a group then \(\lambda_w\) is the usual left regular representation. Moreover, 
\[
  \rednorm{\ff} \coloneqq \sup_{w \in \grpd^{(0)}} \norm{\lambda_w\left(\ff\right)}
\]
defines a \cstar{}norm on \(\algalg \grpd\) (and on \(\stein \grpd\) as well, naturally).
As it turns out, the completions of \(\algalg \grpd\) and \(\stein \grpd\) with respect to \(\rednorm{\cdot}\) are the same, \emph{i.e.}\ the \emph{reduced \cstar{}algebra of \(\grpd\)}, denoted by \(\redalg \grpd\).
In order to lighten the notation, we shall henceforth denote \(\rednorm{\cdot}\) simply by \(\norm{\cdot}\).
\begin{proposition}\label[proposition]{prop:j-map}
  Let \(\grpd\) be an \'etale groupoid with locally compact Hausdorff unit space. The inclusion \(j_{\alg} \colon \algalg \grpd \hookrightarrow \borelb{\grpd}\) extends uniquely to a contractive, injective map
  \[
    j \colon \redalg{\grpd} \hookrightarrow \borelb{\grpd},
  \]
  where \(\borelb{\grpd}\) is the \cstar{}algebra of Borel bounded functions \(\grpd \to \CC\) equipped with supremum norm, pointwise addition and product.
  Furthermore, the map \(j\) satisfies that \(j(\fa\fb) = j(\fa)*j(\fb)\) and \(j(\fa^*) = j(\fa)^*\) for all \(\fa, \fb \in \redalg{\grpd}\).
\end{proposition}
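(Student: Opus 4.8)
The plan is to base everything on one computation with matrix coefficients of the regular representations. For $\fa \in \algalg\grpd$ and $\eta, \eta' \in \grpd w$ (with $w \in \grpd^{(0)}$), unravelling the definition of $\lambda_w$ gives $\langle \lambda_w(\fa)\delta_\eta, \delta_{\eta'}\rangle = \fa(\eta'\eta^{-1})$, the right-hand side being the value of $\fa$ at the unique arrow $\alpha \in \grpd\, r(\eta)$ with $\alpha\eta = \eta'$. Specialising to $\eta = w$, $\eta' = \gamma$ gives $\fa(\gamma) = \langle \lambda_w(\fa)\delta_w, \delta_\gamma\rangle$, so $\abs{\fa(\gamma)} \le \norm{\lambda_w(\fa)} \le \norm{\fa}$ and hence $\supnorm{\fa} \le \norm{\fa}$. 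Thus $j_{\alg}$ is contractive from $\algalg\grpd$ with the reduced norm into $\borelb\grpd$ with the supremum norm; since $\algalg\grpd$ is dense in $\redalg\grpd$ and $\borelb\grpd$ is complete, $j_{\alg}$ extends uniquely to a contractive linear map $j \colon \redalg\grpd \to \borelb\grpd$ (whose image consists of bounded Borel functions, being uniform limits of such).

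Next I would push this identity to all of $\redalg\grpd$: for $\fa_n \in \algalg\grpd$ with $\fa_n \to \fa$ one has $\norm{\lambda_w(\fa_n) - \lambda_w(\fa)} \le \norm{\fa_n - \fa} \to 0$ and $\supnorm{j(\fa_n) - j(\fa)} \to 0$, whence
\[
    j(\fa)(\eta'\eta^{-1}) = \langle \lambda_w(\fa)\delta_\eta, \delta_{\eta'}\rangle \qquad \text{for all } \eta, \eta' \in \grpd w ;
\]
equivalently $\lambda_w(\fa)\delta_w$ is the $\ell^2(\grpd w)$-vector $\big(j(\fa)(\gamma)\big)_{\gamma \in \grpd w}$, so in particular $\sum_{\gamma \in \grpd w}\abs{j(\fa)(\gamma)}^2 < \infty$. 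Injectivity is then immediate: if $j(\fa) = 0$, every matrix coefficient of every $\lambda_w(\fa)$ vanishes, so $\lambda_w(\fa) = 0$ for all $w$, so $\norm{\fa} = \sup_w \norm{\lambda_w(\fa)} = 0$.

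The remaining properties I would extract purely inside the Hilbert spaces $\ell^2(\grpd w)$, with no further limiting arguments. Using that each $\lambda_w$ is a $\ast$-representation together with the displayed identity, one gets $j(\fa^*)(\gamma) = \overline{\langle \lambda_w(\fa)\delta_\gamma, \delta_w\rangle} = \overline{j(\fa)(\gamma^{-1})}$ (with $w = s(\gamma)$), i.e.\ $j(\fa^*) = j(\fa)^*$ for the involution $f^*(\gamma) \coloneqq \overline{f(\gamma^{-1})}$. For multiplicativity, write $j(\fa\fb)(\gamma) = \langle \lambda_w(\fb)\delta_w, \lambda_w(\fa^*)\delta_\gamma\rangle$ and expand both vectors in the orthonormal basis $\{\delta_\rho : \rho \in \grpd w\}$, reading off each coordinate from the displayed identity; the resulting series converges absolutely by Cauchy--Schwarz and the square-summability above (so $j(\fa)\ast j(\fb)$ is well-defined) and sums to $\sum_{\alpha\beta = \gamma} j(\fa)(\alpha)\, j(\fb)(\beta) = (j(\fa)\ast j(\fb))(\gamma)$. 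These three identities exhibit $j$ as an injective $\ast$-homomorphism of $\redalg\grpd$ onto $j(\redalg\grpd)$ with the convolution product and the above involution; carrying over the norm of $\redalg\grpd$ makes that a \cstar{}algebra, and $\ast$-homomorphisms between \cstar{}algebras are automatically completely positive. (If one wants a completely positive map directly into $\borelb\grpd$, the pertinent one is the restriction $\redalg\grpd \to \borelb{\grpd^{(0)}}$, $\fa \mapsto j(\fa)|_{\grpd^{(0)}}$, which is manifestly positive since $\fa \ge 0$ forces $j(\fa)(w) = \langle \lambda_w(\fa)\delta_w, \delta_w\rangle \ge 0$, the matrix amplifications being handled identically.)

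I expect the one point needing discipline to be precisely this routing of the algebraic identities through the matrix-coefficient formula for $j$ on $\redalg\grpd$, which avoids having to commute the convolution sum with the approximation $\fa_n \to \fa$ --- genuinely awkward, since that sum is finite on $\algalg\grpd$ but infinite for general elements, whereas the Hilbert-space computation above sidesteps it entirely. One should also keep in mind that ``completely positive'' and the adjoint identity here refer to the convolution $\ast$-structure carried by $j(\redalg\grpd)$, matching the neighbouring relations $j(\fa\fb) = j(\fa)\ast j(\fb)$ and $j(\fa^*) = j(\fa)^*$; with respect to the pointwise \cstar{}structure of $\borelb\grpd$ the map $j$ is merely completely contractive.
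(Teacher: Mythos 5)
Your proof is correct, and it is more self-contained than the paper's: the paper simply cites \cite{buss-martinez-2025}*{Lemma 3.27} (ultimately Renault's argument) for the existence, contractivity, injectivity and complete positivity of \(j\), and then proves the ``furthermore'' identities by approximation, replacing \(\fa, \fb\) by \(\varepsilon\)-close \(\fa', \fb' \in \algalg{\grpd}\) and writing \(j(\fa\fb) \approx j(\fa'\fb') = j_{\alg}(\fa')*j_{\alg}(\fb') \approx j(\fa)*j(\fb)\). Your matrix-coefficient computation \(j(\fa\fb)(\gamma) = \langle\lambda_w(\fb)\delta_w, \lambda_w(\fa^*)\delta_\gamma\rangle\) replaces that approximation step, and it buys something the paper leaves implicit: the final \(\approx\) in the paper's chain presupposes both that the infinite convolution sum \((j(\fa)*j(\fb))(\gamma)\) converges for general reduced elements and that convolution is jointly continuous from the reduced norm to the supremum norm, and both facts come down to exactly the Cauchy--Schwarz/\(\ell^2\)-summability bound \(\sum_{\gamma\in\grpd w}\abs{j(\fa)(\gamma)}^2 = \norm{\lambda_w(\fa)\delta_w}^2 \le \norm{\fa}^2\) that you make explicit. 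Your contractivity, injectivity and extension arguments via \(j(\fa)(\eta'\eta^{-1}) = \langle\lambda_w(\fa)\delta_\eta,\delta_{\eta'}\rangle\) coincide with the standard proof the paper outsources. Finally, your closing caveat about ``completely positive'' is well taken rather than a defect: with the pointwise \cstar{}structure on \(\borelb{\grpd}\) named in the statement, \(j\) is not even positive (for \(\grpd = \ZZ\) the positive element \(2 - u - u^*\), with \(u\) the generating unitary, is sent to a function taking the value \(-1\)), so the assertion must be read, as you do, with respect to the convolution \Star{}structure carried by the image (equivalently, positive elements are sent to positive-definite functions), or through the genuinely positive restriction \(\fa \mapsto j(\fa)|_{\grpd^{(0)}}\); this is an imprecision inherited from the statement, not a gap in your argument.
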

\begin{proof}
    The first statement is given in \cite{buss-martinez-2025}*{Lemma 3.27} (whose proof is identical to that of \cite{Renault80}*{II.4.2}). The ``furthermore'' statement follows since it is satisfied by \(j_{\alg}\) itself \editB{and some routine $\varepsilon/2$-arguments.}
\end{proof}

For all intents and purposes, by \cref{prop:j-map} we may consider elements \(\ff \in \redalg{\grpd}\) as functions \(\ff \colon \grpd \to \CC\). 
We shall henceforth tacitly do this.
The following, which will be useful for us later on, is a natural consequence of this point of view.
\begin{proposition} \label[proposition]{cauchy-plus-convergent}
  Suppose that \(\{\ff_n\}_{n \in \NN} \subseteq \algalg \grpd\) and \(\ff \in \borelb{\grpd}\) are such that
  \begin{enumerate}
    \item \label{cauchy-plus-convergent:1} \(\supnorm{\ff_n - \ff} \to 0\) when \(n \to \infty\), where \(\supnorm{\cdot}\) denotes the supremum norm; and
    \item \label{cauchy-plus-convergent:2} \(\{\ff_n\}_{n \in \NN}\) is Cauchy in the reduced norm \(\norm{\cdot}\).
  \end{enumerate}
  Then \(\ff \in \redalg{\grpd}\) and, moreover, \(\ff_n \to \ff\) in \(\norm{\cdot}\).
\end{proposition}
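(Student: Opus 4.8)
The plan is to play the reduced norm and the supremum norm against each other via the map $j$ of \cref{prop:j-map}, using completeness of $\redalg{\grpd}$ to produce a candidate limit and then uniqueness of limits to identify it with $\ff$.

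First I would invoke completeness of the \cstar{}algebra $\redalg{\grpd}$: since $\{\ff_n\}_{n \in \NN}$ is Cauchy in $\norm{\cdot}$ by hypothesis~\ref{cauchy-plus-convergent:2}, it converges in $\norm{\cdot}$ to some element $\fg \in \redalg{\grpd}$. Next I would apply the map $j \colon \redalg{\grpd} \hookrightarrow \borelb{\grpd}$ of \cref{prop:j-map}, which is contractive, so that
\[
	\supnorm{j(\ff_n) - j(\fg)} \leq \norm{\ff_n - \fg} \xrightarrow[n \to \infty]{} 0 .
\]
Now the key bookkeeping point: $j$ extends the inclusion $j_{\alg} \colon \algalg{\grpd} \hookrightarrow \borelb{\grpd}$, and each $\ff_n$ lies in $\algalg{\grpd}$, so $j(\ff_n) = j_{\alg}(\ff_n) = \ff_n$ as elements of $\borelb{\grpd}$. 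Hence $\ff_n \to j(\fg)$ in $\supnorm{\cdot}$.

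Finally, hypothesis~\ref{cauchy-plus-convergent:1} says $\ff_n \to \ff$ in $\supnorm{\cdot}$ as well; since $\borelb{\grpd}$ is a normed space, limits are unique, so $j(\fg) = \ff$. Under the (injective) identification of $\redalg{\grpd}$ with its image $j(\redalg{\grpd}) \subseteq \borelb{\grpd}$ — which is exactly the convention adopted just after \cref{prop:j-map} — this says $\ff = \fg \in \redalg{\grpd}$, and moreover $\ff_n \to \fg = \ff$ in $\norm{\cdot}$, as desired.

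I do not expect any genuine obstacle here; the statement is essentially a soft consequence of completeness of $\redalg{\grpd}$ together with the contractivity and injectivity of $j$. The only thing to be careful about is the identification of functions in $\algalg{\grpd}$ with their images under $j$ (i.e.\ that $j$ genuinely restricts to the literal inclusion on $\algalg{\grpd}$), so that the two notions of convergence can be compared inside the single space $\borelb{\grpd}$.
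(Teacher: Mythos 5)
Your argument is correct and is essentially the paper's own proof: both use completeness of \(\redalg{\grpd}\) to produce a limit \(\fg\), then the contractivity of \(j\) (equivalently \(\supnorm{\cdot} \leq \norm{\cdot}\)) together with uniqueness of limits in \(\borelb{\grpd}\) to identify \(\fg\) with \(\ff\). Your extra care about \(j\) restricting to the literal inclusion on \(\algalg{\grpd}\) is exactly the identification the paper makes tacitly after \cref{prop:j-map}.
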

\begin{proof}
  From (\ref{cauchy-plus-convergent:2}) we have that \(\ff_n \to \fg\) for some element \(\fg \in \redalg{\grpd}\). By \cref{prop:j-map} it follows that \(\supnorm{\cdot} \leq \norm{\cdot}\) and, thus,
  \[
    \supnorm{\fg - \ff} \leq \supnorm{\fg - \ff_n} + \supnorm{\ff_n - \ff} \leq \norm{\fg - \ff_n} + \supnorm{\ff_n - \ff} \to 0.
  \]
  As \(\supnorm{\cdot}\) separates the points of \(\borelb{\grpd}\), and hence of \(\redalg{\grpd}\) as well, it follows that \(\ff = \fg \in \redalg{\grpd}\).
\end{proof}

We end the subsection discussing singular functions. We call a function \(\ff \in \redalg \grpd\) \emph{singular} if $\supp \ff$ has empty interior.
It was shown in \cite{CEPSS-2019} that the set \(\singideal \subseteq \redalg{\grpd}\) of singular functions forms an ideal of \(\redalg \grpd\), and the quotient \(\essalg \grpd \coloneqq \redalg \grpd /\singideal\) is usually called the \emph{essential \cstar{}algebra of \(\grpd\)} (see \emph{e.g.}\ \cites{buss-martinez-2025,hausdorff-covers-2025,bruce-li:2024:alg-actions,ExelPitts,KwasniewskiMeyer-essential-cross-2021}).
We will call the elements \(\ff \in \asingideal \coloneqq \algalg \grpd \cap \singideal\) \emph{algebraic} singular functions.


\subsection{Spectral radii in non-amenable groups} \label{subsec:radii}
We introduce a technical statement about group \cstar{}algebras which will be crucial for us later.
Let \(H\) be a finitely generated non-amenable group. For a finite symmetric subset \(K \subseteq H\), we denote the \emph{spectral radius of the random walk associated with $K$} by
\[
  \rho\left(K\right) \coloneqq \norm{\frac{1}{K}\sum_{h \in K} h},
\]
where the above norm is the reduced norm, \emph{i.e.}\ the operator norm in \(\CB(\ell^2(H))\).
It is shown in \cite{Thom}*{Theorem 1} that for any finitely generated non-amenable group $H$ and any $\varepsilon > 0$, there exist a symmetric subset $K_\varepsilon \subseteq H$ such that $\rho(K_\varepsilon) < \varepsilon.$
In particular, this implies the following.
\begin{proposition} \label[proposition]{prop:nonamenable-groups-can-scatter}
If $H$ is any countably infinite non-amenable group, then there exists a sequence $\{H_n\}_{n \in \NN}$ of finite subsets of $H$ such that $\frac{1}{\abs{H_n}}\sum_{h \in H_n} h$ converges to $0$ in $\redalg{H}$.
\end{proposition}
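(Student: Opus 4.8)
The plan is to reduce the statement about a countably infinite non-amenable group $H$ to Thom's theorem, which is stated for finitely generated non-amenable groups. First I would invoke the standard fact that every non-amenable group contains a finitely generated non-amenable subgroup: indeed, amenability is preserved under directed unions, so if every finitely generated subgroup of $H$ were amenable, then $H$ itself would be amenable, a contradiction. Fix such a finitely generated non-amenable subgroup $H_0 \leq H$. For each $n \in \NN$, apply \cite{Thom}*{Theorem 1} with $\varepsilon = 1/n$ to obtain a finite symmetric subset $K_n \subseteq H_0$ with $\rho(K_n) < 1/n$, i.e.\ $\norm{\frac{1}{\abs{K_n}}\sum_{h \in K_n} h} < 1/n$ in $\CB(\ell^2(H_0)) = \redalg{H_0}$.

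The remaining point is that the reduced norm does not shrink when we pass from the subgroup $H_0$ to the ambient group $H$: the inclusion $H_0 \hookrightarrow H$ induces an isometric embedding $\redalg{H_0} \hookrightarrow \redalg{H}$. This is classical — for instance, $\ell^2(H)$ decomposes as an orthogonal direct sum of copies of $\ell^2(H_0)$ indexed by the right cosets $H_0 \backslash H$, and the left regular representation of $H$ restricted to elements of $H_0$ respects this decomposition, acting as the left regular representation of $H_0$ on each summand; hence the operator norm is unchanged. Consequently $\norm{\frac{1}{\abs{K_n}}\sum_{h \in K_n} h}_{\redalg{H}} = \rho(K_n) < 1/n$, so setting $H_n := K_n$ gives a sequence of finite subsets of $H$ with $\frac{1}{\abs{H_n}}\sum_{h \in H_n} h \to 0$ in $\redalg{H}$, as desired.

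I do not anticipate a genuine obstacle here: the statement is essentially a packaging of Thom's theorem together with two standard permanence facts (finitely generated non-amenable subgroups exist; reduced group $C^*$-algebras of subgroups embed isometrically). If one wanted to avoid even citing the second fact, one could instead note that Thom's construction can be carried out inside $H$ directly once a finitely generated non-amenable subgroup is in hand, since $\rho$ computed in $\ell^2(H)$ of a subset of a subgroup agrees with $\rho$ computed in $\ell^2(H_0)$ by the coset argument above. The only mild subtlety worth stating explicitly is symmetry of the sets $K_n$ — Thom's theorem delivers symmetric sets, and we do not actually need symmetry for the conclusion as phrased (we only need the averages to converge to $0$ in norm), so it can simply be carried along or discarded.
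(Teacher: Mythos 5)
Your proposal is correct and matches the paper's intent: the paper derives this proposition directly from Thom's theorem with no written proof ("In particular, this implies the following"), and the reduction you supply --- passing to a finitely generated non-amenable subgroup $H_0 \leq H$ and using the isometric embedding $\redalg{H_0} \hookrightarrow \redalg{H}$ via the right-coset decomposition of $\ell^2(H)$ --- is exactly the standard argument being left implicit. Both permanence facts you invoke are correct, and your remark that symmetry of the sets $K_n$ is not needed for the conclusion is also accurate.
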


\section{The strategy} \label{sec:strategy}
In both \cref{sec:bundle-groups,sec:minimal-effective} we follow the same basic strategy to show that the set of algebraic singular functions is not dense in the \cstar{}ideal of singular functions. We outline the main steps below. Throughout the section, $\grpd$ denotes an ample groupoid, which plays the role of the respective counterexamples constructed in \cref{sec:bundle-groups,sec:minimal-effective}.

\textbf{Step 1:} \textbf{scattering.}
Let $B \subseteq \grpd^{(0)}$ be a set. Suppose that there exists a sequence $\{\CB_n\}_{n \in \NN}$ of finite sets of compact open bisections in $\grpd$ such that for any $n \in \NN$ and any pair of distinct bisections $B_1, B_2 \in \CB_n$ we have $B_1 \cap B_2 =B$. We may then consider 
\[
  \fb_n \coloneqq \frac{1}{\abs{\CB_n}}\sum_{U \in \CB_n}\chi_{U} \in \steinalg\grpd \subseteq \algalg{\grpd}.
\]
Now let $\gamma \in \grpd$. If $\gamma \in B$, then $\gamma \in \bigcap \CB_n$ and so $\fb_n(\gamma) = 1$. If $\gamma \notin B$, then there is at most one set $B_i \in \CB_n$ with $\gamma \in B_i$ and so $\abs{\fb_n(\gamma)} \leq 1/|\CB_n|$. Therefore, if $\abs{\CB_n} \to \infty$ as $n \to \infty$, then $\fb_n \rightarrow \chi_B$ in supremum norm.
In particular, if $\{\fb_n\}_{n \in \NN}$ is, furthermore, Cauchy in $\redalg\grpd$, then $\fb_n \rightarrow \chi_B$ in operator norm, and hence $\chi_B \in \redalg\grpd$ (see \cref{cauchy-plus-convergent}).
Observe that while $B$ is open (as it is an intersection of finitely many open sets), it does not have to be compact (as in non-Hausdorff spaces the intersection of compact spaces need not be compact). Thus $\chi_B$ is typically not an element of $\steinalg\grpd$ or even $\algalg\grpd$.
We refer to this technique as \emph{scattering}, and in both examples in \cref{sec:bundle-groups,sec:minimal-effective} it enables us to construct such elements $\fb = \chi_B$ in $\redalg{\grpd}$ where $B \subseteq \grpd^{(0)}$ is a non-compact open set.

\textbf{Step 2:} \textbf{finding a ``$B$-singular'' function}. Given an open subset $U \subseteq \grpd^{(0)}$, we say that $\fa \in \steinalg\grpd$ is \emph{$U$-singular} if $\fa * \chi_U$ is singular. Observe that if $U = B$ is as in Step 1, then in particular $\fa * \chi_B = \fa * \fb=\lim_{n \to \infty} \fa *\fb_n$ is singular. However, if $\fa * \fb_n \in \steinalg\grpd$ is non-singular, then there is no obvious reason for $\fa*\fb$ to be in the closure of $\asingideal = \singideal \cap \algalg{\grpd}$.
In both our examples, we define $B$-singular functions $\fa \in \steinalg\grpd$ which have a large support outside $B$, thus ensuring the non-singularity of $\fa * \fb_n$ for all \(n \in \NN\).

\textbf{Step 3:} \textbf{showing $\fa*\fb$ is not in $\ol\asingideal$}. To formally prove that $\fa*\fb$ is not in the closure of $\asingideal$, we show something stronger: that $\fa*\fb$ is not even in the supremum closure of $\asingideal$. In both cases, our construction ensures that if $\ff \in \steinalg\grpd$ is  such that $\supp {\fa*\fb} \subseteq \supp \ff,$ then $\supp \ff$ necessarily contains an open set, which immediately implies that  $\fa*\fb$ cannot be approximated by singular functions of $\steinalg\grpd$ in supremum norm (recall that the values taken by $\fa*\fb$ are by construction discrete in $\CC$). The analogous statement for $\algalg \grpd$ follows from \editB{the fact that $J \cap   \steinalg \grpd$ is dense in $\asingideal$ \cite[Thm. 6.3]{GH2025}}. In \cref{sec:minimal-effective}, the proof is explicitly broken down into these two steps; in \cref{sec:bundle-groups}, we give a direct proof for $\algalg \grpd$.

\begin{remark}
We note the following in passing.
\begin{enumerate}
  \item Examples built following the strategy above always have algebraic singular functions, even in the Steinberg algebra.
  Indeed, if $\gamma \in \supp{\fa*\fb}$, then there exists some compact open neighborhood $U$ of $s(\gamma) = \gamma^{-1}\gamma$ contained in $B$. Then $\fa*\chi_U \in \stein\grpd$ is singular, since it is a restriction of $\fa*\fb$, but non-zero, as $\gamma \in \supp{\fa*\chi_U}$.
  \item In Step 1, we can only ensure that $\{\fb_n\}_{n \in \NN}$ is Cauchy at the cost of the amenability of $\grpd$ via \cref{prop:nonamenable-groups-can-scatter}. We suspect that non-amenability is necessary for our strategy to work. In \emph{Hausdorff} amenable groupoids, it is well-known that any \(\ff \in \redalg{\grpd}\) can be approximated by \(\algalg{\grpd} \ni \ff_n \to \ff\) such that \(\supp{\ff_n} \subseteq \supp{\ff}\) (cf.\ \cite{buss-martinez-2025}*{Proposition 5.15}).
  In our examples, $\fb = \chi_B$ dramatically fails this property.
\end{enumerate}
\end{remark}

\section{First example: a bundle of groups} \label{sec:bundle-groups}
The first example we construct is a groupoid \(\grpd\) that is a bundle of groups (that is, \(s(\gamma) = \gamma^{-1} \gamma = \gamma\gamma^{-1} = r(\gamma)\) for all \(g \in \grpd\)). In particular, it is neither effective nor minimal.

\subsection{Description of the groupoid} \label{subsec:description-grpd}
Let \(H\) be a discrete non-amenable group, and let \(\ZZ_2 = \{0,1\}\) be the group with two elements. Put \(G \coloneqq \ZZ_2 \times H\).
Let \(X \coloneqq \{x_{ij}\}_{i, j \in \NN}, Y \coloneqq \{y_i\}_{i \in \NN}\) and \(Z \coloneqq \{z_i\}_{i \in \NN}\) be three countably infinite sets, and let \(\epsilon\) be an additional symbol.
We define our groupoid $\grpd$ as a bundle of groups on the unit space  
\(\grpd^{\left(0\right)} = X \sqcup Y \sqcup Z \sqcup \{\epsilon\},\)
where the isotropy groups at $\epsilon$ and $Z$ are isomorphic to $G$, the isotropies at $Y$ are isomorphic to $\ZZ_2$, and $X$ has trivial isotropies. Formally, the groupoid \(\grpd\) as a set is
\[
  \grpd = X \sqcup (\ZZ_2 \times Y) \sqcup (G \times Z) \sqcup (G \times \left\{\epsilon\right\}) ,
\]
and we identify \(\epsilon\), \(z \in Z\)  and $y \in Y$ with \((0, 1_H, \epsilon)\), \((0, 1_H, z)\) and $(0,y)$ respectively.
Given any pairs $(g,u), (h,u)$ in either $\ZZ_2 \times Y$, $G \times Z$, or $G \times \left\{\epsilon\right\}$, we define $s(g,u)=r(g,u)=u$ and $(g,u)(h,u)=(gh,u)$, that is, the operation within each fiber is inherited from $H$ and $\ZZ_2$.

The topology on $\grpd^{(0)}$ is the following: the points $x \in X$ and $z \in Z$ are discrete; 
$\{x_{ij}\}_{j=1}^\infty$ converges to $y_i$\editB{; $\{x_{ij}\}_{i = 1}^\infty$ converges to $\epsilon$ for all $j$;} and $\{y_i\}_{i=1}^\infty$ and  $\{z_i\}_{i=1}^\infty$ both converge to $\epsilon$ \editB{too}. 
Putting $X_{i} \coloneqq \{x_{ij}\colon j \in \NN\}$, a basis of compact open sets of $\grpd^{(0)}$ is given by sets of the form
\begin{enumerate}
\item $\{x\}$ for any $x \in X$ and $\{z\}$ for any $z \in Z$;
\item for any $y_i$, $i \in \mathbb N$ and any $F \Subset X_i$ finite set, $U_{y_i,F}\coloneqq\{y_i\} \cup X_i \setminus F$;
\item for any $F_Y \Subset Y$, $F_Z \Subset Z$ finite sets, 
$$U_{F_Y,F_Z}\coloneqq \grpd^{(0)} \setminus  \left(\bigcup_{y \in  F_Y} U_{y,\emptyset} \cup  F_Z\right).$$
\end{enumerate}
It is routine to verify that these sets are compact and form a basis with respect to the topology they generate, which coincides with the topology described above. Now for $g=(n,h) \in G$, let 
$$U_g \coloneqq X \sqcup (n \times Y) \sqcup (g \times Z) \sqcup (g \times \left\{\epsilon\right\}),$$
then a basis of compact open sets of $\grpd$ consists bisections of the form $U_gU$ where $U$ is any basic compact open set in $\grpd^{(0)}$. It follows that $\grpd$ is étale, moreover ample as it has a basis of compact open bisections. It is also clear that $\grpd$ is countable and second countable.
Observe that \(x_{ij}\) converges in \(j\) to both \((0, y_i)\) and \((1,y_i)\), and converges in \(i,j\) to every \((g, \epsilon)\). In particular, \(\grpd\) is not Hausdorff, in fact the set of dangerous units is exactly \(Y \cup \{\epsilon\}\).

The rest of the section is dedicated to proving the following.
\begin{theorem} \label[theorem]{thm:alg-sing-are-dense}
  If \(H\) is a finitely generated non-amenable group, and \(\grpd\) is as above, then \(\asingideal = \singideal \cap \algalg{\grpd}\) is not dense in \(\singideal\).
  In particular, neither is \(\singideal \cap \stein \grpd\).
\end{theorem}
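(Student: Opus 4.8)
The plan is to follow the three-step strategy outlined in \cref{sec:strategy}, making each step concrete for this particular bundle of groups $\grpd$.

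\textbf{Step 1 (scattering to build $\fb = \chi_B$).} I take $B \coloneqq X \sqcup (\{0\} \times Z) \sqcup (\{0\} \times \{\epsilon\})$, that is, the open subset of $\grpd^{(0)}$ consisting of the trivial group elements together with all of $X$ — or perhaps more precisely, I aim to scatter over the non-amenable group $H$ sitting in the isotropy at $\epsilon$ and at the points of $Z$. Concretely: apply \cref{prop:nonamenable-groups-can-scatter} to $H$ to get finite subsets $H_n \subseteq H$ with $\frac{1}{|H_n|}\sum_{h \in H_n} h \to 0$ in $\redalg H$. For each $h \in H$, consider the compact open bisection $U_{(0,h)} \cdot U_{F_Y,\emptyset}$ for suitable cofinite pieces, and set $\CB_n \coloneqq \{U_{(0,h)} U_n : h \in H_n\}$ for an appropriate shrinking sequence of basic compact opens $U_n \subseteq \grpd^{(0)}$. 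The point is that distinct bisections in $\CB_n$ agree exactly on the portion of $\grpd^{(0)}$ where the $H$-coordinate is irrelevant — which I will arrange to be $B \cap U_n$, i.e.\ the part supported on $X$ (since on $X$ the isotropy is trivial, $U_{(0,h)}$ restricted to $X$ is just the identity on $X$). Then $\fb_n \coloneqq \frac{1}{|\CB_n|}\sum_{B' \in \CB_n}\chi_{B'}$ converges in supremum norm to $\chi_{B'_\infty}$ for the appropriate open (non-compact, since it meets $X$ in a cofinite set but $X$ is not closed) set, and the scattering estimate via \cref{prop:nonamenable-groups-can-scatter} (transported to $\redalg\grpd$ via the left regular representations, which on each fiber restrict to copies of $\lambda_H$ or $\lambda_{\ZZ_2}$) shows $\{\fb_n\}$ is Cauchy in $\norm{\cdot}$; then \cref{cauchy-plus-convergent} gives $\fb \coloneqq \chi_{B} \in \redalg\grpd$ where $B$ is non-compact open in $\grpd^{(0)}$, hence $\fb$ is singular (its support $B$ has empty interior in $\grpd$ — wait, $B \subseteq \grpd^{(0)}$ which is open, so I should instead multiply to get singularity; see Step 2).

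\textbf{Step 2 ($B$-singular function $\fa$).} I choose $\fa \in \stein\grpd$ to exploit the $\ZZ_2$-coordinate at $Y$: something like $\fa \coloneqq \chi_{U_{(0,1_H)}} - \chi_{U_{(1,1_H)}}$ restricted appropriately, i.e.\ a function that vanishes precisely on the units $y_i$ where the two sheets $(0,y_i)$ and $(1,y_i)$ collide, while remaining nonzero on (a cofinite subset of) each $X_i$ and on $Z$, $\epsilon$. The key computation: $\fa * \fb = \fa * \chi_B = \fa|_{\grpd B}$, and because $x_{ij} \to y_i$ from within $X$ while $\fa(x_{ij}) \neq 0$ but $\fa(y_i) = 0$ (as the two colliding sheets cancel), and because $B$ omits $Y$, the support of $\fa * \chi_B$ will have empty interior — the nonzero values occur on the $X$-points and on $Z\sqcup\{\epsilon\}$-type points, all of which are limits of sequences where $\fa*\chi_B$ vanishes. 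So $\fa*\fb$ is a genuine singular element of $\redalg\grpd$. Meanwhile $\fa*\fb_n \in \stein\grpd$ has support containing the cofinite subsets of the $X_i$'s glued along convergent sequences — this support has \emph{nonempty} interior (it contains $\bigcup_i (X_i \setminus F_i)$ which, together with enough $y_i$'s, is open), so $\fa*\fb_n$ is non-singular for every $n$.

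\textbf{Step 3 (not in $\ol{\asingideal}$), the main obstacle.} This is the heart of the argument and where I expect the real work. The strategy is to show $\fa*\fb \notin \ol{\asingideal}^{\infty}$ (supremum-norm closure), which suffices since $\norm{\cdot} \geq \supnorm{\cdot}$. Suppose $\fg \in \algalg\grpd$ with $\supnorm{\fa*\fb - \fg}$ small. Since the nonzero values of $\fa*\fb$ are discrete in $\CC$ (they are $\pm 1$ or differences of roots of unity times group-algebra coefficients — a finite, $0$-avoiding set), $\supp{\fa*\fb} \subseteq \supp{\fg}$. Now I argue, using the concrete topology of $\grpd$: because $\supp{\fa*\fb}$ contains a cofinite subset of each $X_i$ for infinitely many $i$, and each $X_i$ accumulates at $y_i$, continuity of $\fg$ on open bisections forces $\fg$ to be nonzero on a whole neighborhood — or $\fg(y_i) \neq 0$ for infinitely many $i$ — and then, since $y_i \to \epsilon$, iterating forces $\fg$ nonzero on an open set around $\epsilon$. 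This contradicts $\fg$ being singular. The delicate point is handling that $\algalg\grpd$ functions, while continuous on each bisection, can jump between sheets at the non-Hausdorff points — so I must be careful that the cancellation built into $\fa$ genuinely obstructs a continuous singular approximant, using that any $\fg \in \algalg\grpd$ is a \emph{finite} sum $\sum_k f_k$ with $f_k \in \contc{U_{g_k} U^{(k)}}$, and analyze the finitely many group-coordinates $g_k$ appearing. The $\algalg\grpd$ case versus the $\stein\grpd$ case: for Steinberg functions the values are locally constant so the argument is cleanest; for $\algalg\grpd$ one passes from the Steinberg conclusion via the continuity/finite-support structure, as the remark in \cref{sec:strategy} indicates — in this simple groupoid this can be done directly. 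The ``in particular'' for $\stein\grpd$ is then immediate since $\stein\grpd \cap \singideal \subseteq \algalg\grpd \cap \singideal = \asingideal$ and density in $\singideal$ would a fortiori give density of the smaller set, contradiction.
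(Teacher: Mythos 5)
Your overall plan is the paper's three-step strategy, and you even pick the same $B$-singular element $\fa = \chi_{U_{(0,1_H)}} - \chi_{U_{(1,1_H)}}$, but the construction is derailed by one concrete error that propagates through all three steps: you have misidentified where the scattering bisections coincide. For $h_1 \neq h_2$ the intersection $U_{(0,h_1)} \cap U_{(0,h_2)}$ is $X \sqcup (\{0\}\times Y)$, \emph{not} $X \sqcup Z \sqcup \{\epsilon\}$: over $Y$ the isotropy is only $\ZZ_2$ and $(0,h)\mapsto 0$ forgets the $H$-coordinate, so all the $U_{(0,h)}$ pass through the same sheet $\{0\}\times Y$, whereas over $Z$ and $\epsilon$ the isotropy is all of $G=\ZZ_2\times H$ and the bisections are pairwise disjoint there. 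Hence the scattering limit is $\fb=\chi_B$ with $B=X\sqcup Y$. (Your candidate $X\sqcup Z\sqcup\{\epsilon\}$ is not even open in $\grpd^{(0)}$, since every neighbourhood of $\epsilon$ contains cofinitely many $y_i$; and the extra shrinking sets $U_n$ you insert are unnecessary and would change the supremum-norm limit.)

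This inversion breaks Step 2: since both $U_{(0,1_H)}$ and $U_{(1,1_H)}$ contain all of $X$, the function $\fa$ vanishes identically on $X$ and takes the values $\pm 1$ on the two colliding sheets $(0,y)$ and $(1,y)$ --- exactly the opposite of what you assert. With the correct $B$, the convolution $\fa*\chi_B$ is supported precisely on $\ZZ_2\times Y$, which has empty interior because every neighbourhood of $(n,y_i)$ contains a cofinite part of $X_i$, where $\fa$ vanishes; with your $B$, the support of $\fa*\chi_B$ would consist of points of $G\times Z$ (and over $\epsilon$), and the points of $G\times Z$ are \emph{isolated} in $\grpd$, so the resulting element would not be singular at all. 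Step 3 must then be run from the correct premise: the obstruction is that $\abs{(\fa*\fb)(0,y)}=1$ for all $y\in Y$, and any $\ff\in\algalg{\grpd}$ with $\abs{\ff(y)}\geq 1/2$ on all of $Y$ is forced --- by writing $\ff$ as a finite sum of functions in $\contc{U_g}$ and using continuity of each summand at $(g,\epsilon)$, where both $Y$ and $Z$ accumulate --- to be nonzero at some isolated point $(g,z)$ with $z\in Z$, hence non-singular. Your version of this argument (``$\supp{\fa*\fb}$ contains a cofinite subset of each $X_i$, so continuity forces $\ff(y_i)\neq 0$'') starts from a false description of $\supp{\fa*\fb}$ and runs the continuity argument in the wrong direction.
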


\subsection{Scattering}
Since \(H\) is non-amenable, by \cref{prop:nonamenable-groups-can-scatter} we can find some sequence \(\{H_n\}_{n \in \NN}\) of finite non-empty subsets of \(H\) such that the sequence \(\frac{1}{\abs{H_n}} \sum_{h \in H_n} h\) tends to \(0\) in \(\redalg{H}\).
Viewing \(H \subseteq G\), we have that \(\redalg{H} \subseteq \redalg{G}\), and thus \(\frac{1}{\abs{H_n}} \sum_{h \in H_n} h\) tends to \(0\) in \(\redalg{G}\) as well.

Let
\[
\fb_n \coloneqq \iota\left(\frac{1}{\abs{H_n}} \sum_{h \in H_n} h\right) = \frac{1}{\abs{H_n}} \sum_{h \in H_n} \chi_{U_{(0,h)}} \in \stein \grpd \subseteq \algalg{\grpd}.
\]

To show that $\fb_n$ converges in norm, we use the following lemma, which is well-known and routine to check.

\begin{lemma}
\label{lem:bundle-convergence}
Let $\grpd$ be a bundle of groups, and let $\varphi \colon \grpd w = w \grpd w \to K$ be a group isomorphism. There is an induced (spatially implemented) isomorphism $\hat \varphi \colon \CB(\ell^2(\grpd w)) \to \CB(\ell^2(K))$ such that
$$\lambda_w(\chi_{U})=\hat\varphi^{-1}(\lambda_K(\varphi(U \cap w \grpd w)))$$
for every $U \subseteq \grpd$ compact open bisection, where $\lambda_w \colon \algalg\grpd \to \CB(\ell^2(\grpd w))$
and $\lambda_K \colon K \to \CB(\ell^2(K))$ are the regular representations of \(\grpd\) (at \(w\)) and of \(K\) respectively. That is, \(\hat\varphi\) intertwines \(\lambda_w\) and \(\lambda_K\).
\end{lemma}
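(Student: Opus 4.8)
The plan is to realize $\hat\varphi$ as conjugation by the unitary that $\varphi$ induces between the two $\ell^2$-spaces, and then to check the displayed identity by a one-line computation on basis vectors. The point is that for a bundle of groups the set $\grpd w = w\grpd w$ is literally the isotropy group at $w$, so $\ell^2(\grpd w)$ carries the left regular representation of that group, and $\lambda_w$ applied to the characteristic function of a bisection is just a single term of left translation (or $0$).

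First I would set up the unitary. Since $\grpd$ is a bundle of groups, $\gamma\gamma^{-1} = \gamma^{-1}\gamma = w$ for every $\gamma \in \grpd w$, so $\grpd w = w\grpd w$ is the isotropy group at $w$ and $\varphi$ is a group isomorphism of it onto $K$. Let $V_\varphi \colon \ell^2(\grpd w) \to \ell^2(K)$ be the unitary determined on the canonical orthonormal basis by $V_\varphi\delta_\gamma = \delta_{\varphi(\gamma)}$ (unitarity is immediate as $\varphi$ is a bijection), and put $\hat\varphi \coloneqq \operatorname{Ad}(V_\varphi) \colon T \mapsto V_\varphi T V_\varphi^*$. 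This is a spatially implemented $*$-isomorphism $\CB(\ell^2(\grpd w)) \to \CB(\ell^2(K))$, so it only remains to verify the intertwining formula.

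Next I would compute. For a compact open bisection $U \subseteq \grpd$ the defining sum for $\lambda_w$ collapses, because $\grpd\gamma\gamma^{-1} = \grpd w = w\grpd w$, to
\[
	\lambda_w(\chi_U)\delta_\gamma = \sum_{\alpha \in w\grpd w} \chi_U(\alpha)\,\delta_{\alpha\gamma} = \sum_{\alpha \in U \cap w\grpd w} \delta_{\alpha\gamma},
\]
and since $U$ is a bisection the index set $U \cap w\grpd w$ has at most one element. If it is empty, then $\lambda_w(\chi_U) = 0$ and $\lambda_K(\varphi(U \cap w\grpd w)) = \lambda_K(\emptyset) = 0$, so there is nothing to prove. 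If $U \cap w\grpd w = \{\alpha_0\}$, write $k_0 \coloneqq \varphi(\alpha_0) \in K$; then for every $k \in K$,
\[
	\hat\varphi(\lambda_w(\chi_U))\,\delta_k = V_\varphi\,\lambda_w(\chi_U)\,\delta_{\varphi^{-1}(k)} = V_\varphi\,\delta_{\alpha_0\varphi^{-1}(k)} = \delta_{\varphi(\alpha_0)\varphi(\varphi^{-1}(k))} = \delta_{k_0 k} = \lambda_K(k_0)\,\delta_k,
\]
where the third equality uses that $\varphi$ is a homomorphism. Hence $\hat\varphi(\lambda_w(\chi_U)) = \lambda_K(k_0) = \lambda_K(\varphi(U \cap w\grpd w))$, and applying $\hat\varphi^{-1}$ gives the claimed identity.

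There is no genuine obstacle here; the statement is essentially bookkeeping. The only points deserving a word of care are that the bundle-of-groups hypothesis is exactly what makes $\grpd w$ a group and truncates the $\lambda_w$-sum to a single term, and that one must read $\lambda_K$ applied to the (at most one-element) set $\varphi(U \cap w\grpd w)$ as $\lambda_K$ of that element when the set is nonempty and as $0$ otherwise, so that the formula is meaningful in both cases.
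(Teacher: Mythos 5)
Your proof is correct and is precisely the routine verification the paper alludes to (the paper states the lemma as ``well-known and routine to check'' and omits the argument): conjugation by the unitary $V_\varphi\delta_\gamma=\delta_{\varphi(\gamma)}$, together with the observation that the bundle-of-groups hypothesis collapses the defining sum for $\lambda_w(\chi_U)$ to the single term indexed by $U\cap w\grpd w$. Nothing is missing, and your remark on how to read $\lambda_K$ of the empty set is the right way to make the displayed formula meaningful in all cases.
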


\begin{proposition} \label[proposition]{cauchy-bundle-groups}
  The sequence \(\{\fb_n\}_{n \in \NN}\) is Cauchy in \(\redalg{\grpd}\). In fact, it converges to \(\chi_B\), where \(B \coloneqq X \sqcup Y\).
\end{proposition}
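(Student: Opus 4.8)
The plan is to evaluate $\lambda_w(\fb_n)$ at each unit $w \in \grpd^{(0)}$ separately, using \cref{lem:bundle-convergence} to transfer the computation to the relevant isotropy group, and then combine these pointwise estimates into a uniform bound. Recall that $\rednorm{\fb_n - \fb_m} = \sup_{w} \norm{\lambda_w(\fb_n - \fb_m)}$, so it suffices to control each $\norm{\lambda_w(\fb_n-\fb_m)}$ uniformly in $w$.

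First I would sort the units into four cases according to their isotropy. For $w = \epsilon$ or $w = z_i \in Z$, the isotropy group is $G = \ZZ_2 \times H$, and $\fb_n$ restricts on this fiber to $\frac{1}{\abs{H_n}}\sum_{h \in H_n} (0,h)$; by \cref{lem:bundle-convergence} its image under $\lambda_w$ has the same norm as $\lambda_G\bigl(\frac{1}{\abs{H_n}}\sum_{h}(0,h)\bigr)$, which equals $\rednorm{\frac{1}{\abs{H_n}}\sum_{h \in H_n} h}$ in $\redalg{H}$ (the inclusion $\redalg{H} \hookrightarrow \redalg{G}$ is isometric). By the choice of $\{H_n\}$ via \cref{prop:nonamenable-groups-can-scatter}, this tends to $0$, hence $\{\lambda_w(\fb_n)\}_n$ is Cauchy with this bound, uniformly over all such $w$. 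For $w = y_i \in Y$, the isotropy is $\ZZ_2$ and every bisection $U_{(0,h)}$ meets $y_i\grpd y_i$ in the single element $(0, y_i)$, so $\lambda_{y_i}(\chi_{U_{(0,h)}})$ is the identity on $\ell^2(\ZZ_2 y_i)$ for every $h$; therefore $\lambda_{y_i}(\fb_n)$ is the identity for all $n$, so this sequence is (trivially) Cauchy and converges to $\lambda_{y_i}(\chi_B)$ since $y_i \in B$. For $w = x_{ij} \in X$, the isotropy is trivial, $\grpd x_{ij} = \{x_{ij}\}$, and each $\chi_{U_{(0,h)}}$ acts as the identity (scalar $1$) on $\ell^2(\{x_{ij}\}) \cong \CC$, so again $\lambda_{x_{ij}}(\fb_n) = 1$ for all $n$, matching $\lambda_{x_{ij}}(\chi_B)$ since $x_{ij} \in B$.

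Assembling these: $\norm{\lambda_w(\fb_n - \fb_m)} = 0$ whenever $w \in X \sqcup Y$, and $\norm{\lambda_w(\fb_n-\fb_m)} \le \rednorm{\frac{1}{\abs{H_n}}\sum_{H_n} h} + \rednorm{\frac{1}{\abs{H_m}}\sum_{H_m} h}$ whenever $w \in Z \sqcup \{\epsilon\}$, so taking the supremum over $w$ shows $\{\fb_n\}_n$ is Cauchy in $\rednorm{\cdot}$. For the identification of the limit, I note that $\fb_n \to \chi_B$ in supremum norm: indeed $\fb_n$ takes value $1$ on $X \sqcup Y = B$ (each such unit lies in every $U_{(0,h)}$), and off $B$ every $\gamma \in \grpd$ lies in at most one of the bisections $\{U_{(0,h)}\}_{h \in H_n}$ — here one checks that for $\gamma = (g, w)$ with $w \in Z \sqcup\{\epsilon\}$, $\gamma \in U_{(0,h)}$ forces $g = (0,h)$, pinning down $h$ uniquely, while for $\gamma$ in a $\ZZ_2$-fiber over $Y$ the value is already accounted for in $B$ — so $\abs{\fb_n(\gamma)} \le 1/\abs{H_n} \to 0$ off $B$ (provided $\abs{H_n} \to \infty$, which we may assume, discarding finitely many terms). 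Since $\{\fb_n\}$ is Cauchy in $\rednorm{\cdot}$ and converges to $\chi_B$ in $\supnorm{\cdot}$, \cref{cauchy-plus-convergent} gives $\chi_B \in \redalg{\grpd}$ and $\fb_n \to \chi_B$ in $\rednorm{\cdot}$.

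The only genuinely substantive point is the case $w \in Z \sqcup\{\epsilon\}$, where non-amenability of $H$ is used through \cref{prop:nonamenable-groups-can-scatter}; the remaining cases are bookkeeping about which bisections pass through which isotropy groups. I expect the main obstacle in writing this cleanly to be stating \cref{lem:bundle-convergence} precisely enough that the identity $\norm{\lambda_w(\chi_{U_{(0,h)}})} $-type computations over the $\ZZ_2$- and trivial fibers, as well as the $G$-fibers, all follow uniformly — in particular that the isomorphism $\redalg{H} \hookrightarrow \redalg{G}$ really does preserve the norm of the averaged element, which is standard since $H$ is a subgroup of $G$.
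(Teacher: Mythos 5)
Your argument is correct and follows essentially the same route as the paper: a case analysis over the fibers $X$, $Y$, $Z\sqcup\{\epsilon\}$ via \cref{lem:bundle-convergence}, with non-amenability entering only through the $G$-fibers, followed by the pointwise computation of $\fb_n$ and an appeal to \cref{cauchy-plus-convergent} to identify the limit as $\chi_B$. The only cosmetic difference is that the paper records explicitly that $\fb_n(1,y)=0$ on the $\ZZ_2$-fibers over $Y$ (these points lie outside $B$ yet in no $U_{(0,h)}$), which is the precise content of your slightly vague remark that those values are ``accounted for.''
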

\begin{proof}
In order to show that \(\{\fb_n\}_{n \in \NN}\) is Cauchy, it suffices to show that \(\{\lambda_x(\fb_n)\}_{n \in \NN}, \{\lambda_y(\fb_n)\}_{n \in \NN}\) and \(\{\lambda_z(\fb_n)\}_{n \in \NN}\) are Cauchy for any (equivalently all) \(x \in X, y \in Y\) and \(z \in Z\).
(In particular, notice that the behaviour at \(\epsilon\) is identical to that at any \(z \in Z\).)
Let $\varphi_x,\varphi_y,\varphi_z$ denote the isomorphisms $x\grpd x \to \{1\}, y\grpd y \to \mathbb Z_2, z\grpd z \to G$ respectively; and as in \cref{lem:bundle-convergence}, consider the induced isomorphisms $\hat\varphi_x, \hat\varphi_y, \hat\varphi_z$. It follows that
\begin{eqnarray}
\label{eqna:bundle_itemx} \lambda_x(\chi_{U_{(0,h)}})&=&\hat\varphi_x^{-1} \circ\lambda_{\{1\}}(1)=\id_{\ell^2(\grpd x)} \in \CB(\ell^2(\grpd x)); \\
\label{eqna:bundle_itemy} \lambda_y(\chi_{U_{(0,h)}})&=&\hat\varphi_y^{-1}\circ\lambda_{\ZZ_2}(0)=\id_{\ell^2(\grpd y)} \in \CB(\ell^2(\grpd y)); \\
\label{eqna:bundle_itemz} \lambda_z(\chi_{U_{(0,h)}})&=&\hat\varphi_z^{-1}\circ\lambda_{G}(0,h).
\end{eqnarray}
Equations \eqref{eqna:bundle_itemx} and \eqref{eqna:bundle_itemy} imply that 
$$\lambda_x(\fb_n)= \frac{1}{\abs{H_n}} \sum_{h \in H_n}\id_{\ell^2(\grpd x)} \quad \text{ and } \quad \lambda_y(\fb_n)= \frac{1}{\abs{H_n}} \sum_{h \in H_n}\id_{\ell^2(\grpd y)}$$
are constant, and therefore Cauchy.
Furthermore from \eqref{eqna:bundle_itemz} we obtain
$$\lambda_z\left(\fb_n\right) = \frac{1}{\abs{H_n}} \sum_{h \in H_n} \hat\varphi_z^{-1}\circ\lambda_{G}(0,h)=\hat\varphi_z^{-1} \circ \lambda_{G} \left( \frac{1}{\abs{H_n}}\sum_{h \in H_n} (0,h) \right).$$
Observe that the norm of $\lambda_{G} \left( \frac{1}{\abs{H_n}}\sum_{h \in H_n} (0,h) \right)$ in $\CB(\ell^2(G))$ is exactly the norm of $\frac{1}{\abs{H_n}}\sum_{h \in H_n} (0,h)$ in $\redalg G$ by definition, which, by assumption, is Cauchy and converges to \(0\) as $n \to \infty$.
It follows that \(\{\fb_n\}_{n \in \NN} \subseteq \redalg{\grpd}\) converges to some function \(\fb \in \redalg{\grpd}\).

By \cref{cauchy-plus-convergent}, in order to finish the proof it thus suffices to show that \(\supnorm{\fb_n - \chi_B} \to 0\) when \(n \to \infty\).
For this, observe that
\begin{itemize}
  \item \(\fb_n(x) = 1\) for all \(x \in X\);
  \item \(\fb_n(0, y) = 1\) for all $y \in Y$;
  \item \(\fb_n(1, y) = 0\) for all $y \in Y$; and
  \item \(\fb_n(g, z) = \fb_n(g, \epsilon) = 1/\abs{H_n}\) for all $g \in G$ and $z \in Z$.
\end{itemize}
In particular it follows that \(\fb_n\) restricts to \(1\) on \(B = X \sqcup Y\), and tends to \(0\) on the rest, as desired.
\end{proof}

\subsection{Finding a \texorpdfstring{\(B\)}{B}-singular function.}
Consider the element
\[
  \fa \coloneqq \chi_{U_{(0,1_H)}} - \chi_{U_{(1, 1_H)}} \in \stein{\grpd},
\]
and also consider the convolution \(\fa * \fb = \fa|_{\grpd B} \in \redalg{\grpd}\).
Then $\supp{\fa*\fb} \subseteq \grpd B \cap (U_{(0,1_H)} \cup U_{(1,1_H)})$, and so we have
\begin{itemize}
  \item \((\fa*\fb)(x) = 0\) for all \(x \in X\);
  \item \((\fa*\fb)(0, y) = 1\) for all $y \in Y$;
  \item \((\fa*\fb)(1, y) = -1\) for all $y \in Y$; and
  \item \((\fa*\fb)(g, z) = (\fa * \fb)(g, \epsilon) = 0\) for all $g \in G$ and $z \in Z$.
\end{itemize}
In particular, it follows that \(\supp{\fa*\fb}\) contains no open set, and so $\fa*\fb \in J$.

\subsection{Showing \texorpdfstring{$\fa *\fb$}{a*b} is not in \texorpdfstring{$\overline{J_{alg}}$}{Jalg}.}
The last step to showing \cref{thm:alg-sing-are-dense} is to prove that \(\fa*\fb\) is not contained in the closure of \(\singideal \cap \algalg{\grpd}\) under the supremum norm.

\begin{lemma} \label[lemma]{lem:bundle_being close to singular makes you unsingular}
  Let \(\ff \in \algalg{\grpd}\) be such that \(\abs{\ff(y)} \geq 1/2\) for all \(y \in Y\). Then \(\ff\) is not singular.
\end{lemma}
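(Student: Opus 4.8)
Suppose, for contradiction, that $\ff$ is singular while $\abs{\ff(y_i)} \geq 1/2$ for every $i$; I will deduce $\lim_{i\to\infty}\ff(y_i)=0$, which is absurd. (Recall that the unit $y_i$ is identified with the element $(0,y_i)$ of $\grpd$.) First I would fix a presentation of $\ff$: applying \cref{prop:alg alg description} to the basis of $\grpd$ consisting of the compact open bisections $U_g V$, with $g\in G$ and $V\subseteq\grpd^{(0)}$ a basic compact open set, write $\ff=\sum_{k=1}^{N}\ff_k$ with $\ff_k\in\contc{W_k}$ and $W_k=U_{g_k}V_k$. Since each $W_k$ is compact, $\ff_k$ is nothing more than a continuous function on $W_k$, and the source map restricts to a homeomorphism from $W_k$ onto $V_k$; this continuity, transported along that homeomorphism, is the only regularity of $\ff$ the argument uses.

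The heart of the proof is a two-step ``propagation of vanishing'' toward the point $\epsilon$. Step one: each singleton $\{(g,z)\}=U_g\{z\}$ is open in $\grpd$, so the singularity of $\ff$ forces $\ff(g,z)=0$ for all $g\in G$ and all $z\in Z$. Step two: fix $g\in G$ and let $l\to\infty$. One checks termwise that $\ff_k(g,z_l)\to\ff_k(g,\epsilon)$: if $g_k\neq g$ this is trivial; if $g_k=g$ but $\epsilon\notin V_k$ then $V_k$ (being of type (1) or (2)) contains at most one $z_l$, so both sides are eventually $0$; and if $g_k=g$ and $\epsilon\in V_k$ it follows from $z_l\to\epsilon$ inside the open set $V_k$, the homeomorphism $W_k\cong V_k$, and continuity of $\ff_k$. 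Summing over the finitely many $k$ gives $0=\ff(g,z_l)\to\ff(g,\epsilon)$, so $\ff$ vanishes on the whole isotropy group $G\times\{\epsilon\}$.

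I would then run the identical continuity argument one more time, with $y_i\to\epsilon$ in place of $z_l\to\epsilon$. The point is that $(0,y_i)\in W_k=U_{g_k}V_k$ forces the first coordinate of $g_k$ to be $0$ (because $U_{g_k}\cap(\ZZ_2\times Y)=\{n_k\}\times Y$ for $g_k=(n_k,h_k)$), and for such $k$ the element of $W_k$ lying over $\epsilon$ is exactly $(g_k,\epsilon)$. Hence, grouping the resulting termwise limits by the value of $g_k$,
\[
  \lim_{i\to\infty}\ff(y_i)\;=\;\sum_{h\in H}\ff(0,h,\epsilon)\;=\;0,
\]
where the last equality is the vanishing on $G\times\{\epsilon\}$ established above. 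This contradicts $\abs{\ff(y_i)}\geq 1/2$, and the lemma follows.

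The one place that needs genuine care --- as opposed to routine point-set topology --- is the bookkeeping of which bisections $W_k$ contain which of the points $(0,y_i)$, $(g,z_l)$, $(g,\epsilon)$, and with which ``label'' $g_k$. The subtlety comes from non-Hausdorffness: the point $(0,y_i)$ does not have a unique limit in $\grpd$ (it converges to every $(0,h,\epsilon)$, $h\in H$), so one cannot speak of ``the'' limit of $\ff$ along the $y_i$. The argument sidesteps this by performing every limit inside a single bisection $W_k$, where the source homeomorphism $W_k\cong V_k$ pins down the relevant limiting element unambiguously. Everything else --- extending the $\ff_k$ by zero outside $W_k$, the finiteness of the sum $\sum_k$, and interchanging the limit with this finite sum --- is routine and I would not belabour it.
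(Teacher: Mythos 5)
Your proof is correct, and it reaches the conclusion by a genuinely different (and in some ways cleaner) route than the paper. The paper works with the coarser cover $\{U_g\}_{g\in G}$ and runs a quantitative argument: with $\varepsilon=\tfrac{1}{5C}$ it chooses, for each nonzero piece $\ff_g$, a basic neighbourhood of $(g,\epsilon)$ on which $\ff_g$ varies by less than $\varepsilon$; then for $y\in Y$ and $z\in Z$ avoiding the finitely many excluded indices, the pigeonhole principle applied to $\tfrac12\le\abs{\ff(y)}\le\sum_h\abs{\ff_{(0,h)}(y)}$ produces an $h$ with $\abs{\ff_{(0,h)}(y)}\ge\tfrac{1}{2C}$, and the two-step continuity estimate through $(g,\epsilon)$ gives $\abs{\ff_g(g,z)}>\tfrac{1}{2C}-2\varepsilon>0$, exhibiting the open singleton $\{(g,z)\}$ inside $\supp\ff$. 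You instead take the contrapositive and turn the whole thing into a limit computation: singularity forces $\ff$ to vanish on the isolated points $G\times Z$, continuity of the pieces along $z_l\to\epsilon$ inside each bisection then kills $\ff$ on $G\times\{\epsilon\}$, and the same continuity along $y_i\to\epsilon$ yields $\lim_i\ff(y_i)=\sum_{h\in H}\ff(0,h,\epsilon)=0$, contradicting the hypothesis. Your bookkeeping is sound: the finer decomposition into $\contc{U_{g_k}V_k}$ with $V_k$ basic is licensed by \cref{prop:alg alg description}, the case analysis on whether $\epsilon\in V_k$ correctly isolates the terms that survive the limit, and the identity $\lim_i\ff(y_i)=\sum_h\ff(0,h,\epsilon)$ (valid for every $\ff\in\algalg\grpd$) is exactly the right structural statement. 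What each approach buys: yours avoids all $\varepsilon$-management and the pigeonhole step, and makes transparent \emph{why} non-Hausdorffness at $\epsilon$ is the culprit; the paper's version is quantitative -- it locates a concrete isolated point where $\abs{\ff}$ is bounded below by $\tfrac{1}{2C}-2\varepsilon$ -- which is closer in spirit to the supremum-norm perturbation arguments needed for the harder minimal-and-effective example in the last section.
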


\begin{proof}
Observe that $\{U_g \colon g \in G\}$ is a compact open cover of $\grpd$, and thus by \cref{prop:alg alg description} we may write any $\ff \in  \algalg{\grpd}$ as
$$\ff=\sum_{g \in G} \ff_g$$
where $\ff_g \in C_c(U_g)$, and only finitely many -- say $C$ -- summands are nonzero.

Let $\varepsilon \coloneqq \frac{1}{5C} < \frac{1}{4C}$, and consider $(g, \epsilon) \in U_g$. Since $\ff_g$ is continuous on $U_g$,  $(g, \epsilon)$ has some basic open neighborhood $U_{g, \varepsilon}$ such that for any $\gamma \in U_{g, \varepsilon}$ we have
\begin{equation}
\label{eq:bundle-epsilon-close}
|\ff_g(g, \epsilon)-\ff_g(\gamma)| < \varepsilon.
\end{equation}
We may assume that $U_{g, \varepsilon}$ is a basic compact open bisection that is of the form $U_gU_{F^g_Y, F_Z^g}$ for some finite sets $F^g_Y \Subset Y$, $F^g_Z \Subset Z$.
Put $F_Y \coloneqq \bigcup_{g \in G, \ff_g \neq 0} F_Y^g$ and $F_Z \coloneqq \bigcup_{g \in G, \ff_g \neq 0} F_Z^g$, 
and let $y \in Y \setminus F_Y$, $z \in Z \setminus F_Z$.
Observe that we have
\begin{equation*}
\frac{1}{2}\leq|\ff(y)|=\left|\sum_{h \in H} \ff_{(0,h)}(y)\right| \leq \sum_{h \in H} |\ff_{(0,h)}(y)|,
\end{equation*}
where the first inequality is by assumption, and the equality follows from $\supp{\ff_g} \subseteq U_g$ and the observation that $y\in U_g$ if and only if $g=(0,h)$ for some $h \in H$.
Since there are at most $C$ nonzero terms on the right hand side, there exists some $h \in H$ such that $|\ff_{(0,h)}(y)|\geq \frac{1}{2C}$. Put $g \coloneqq (0,h)$, then $y\notin F_Y \supseteq F_Y^g$ and $z \notin F_Z$ ensure that $y, (g,z) \in U_{g, \varepsilon}$, hence
we obtain
$$\frac{1}{2C}-|\ff_g(g,z)|\leq|\ff_g(y)|-|\ff_g(g,z)|\leq
|\ff_g(y)-\ff_g(g,z)|\leq |\ff_g(y)-\ff_g(g,\epsilon)|+|\ff_g(g,\epsilon)-\ff_g(g,z)| <2\varepsilon,
$$
where the last inequality is by \eqref{eq:bundle-epsilon-close}.
In particular, as $\frac{1}{2C}-2\varepsilon >0$, we have
$\ff_g(g,z)\neq 0$. But notice that $\supp{\ff_g} \subseteq U_g$ implies $\ff(g,z)=\ff_g(g,z)$, and so $(g,z) \in \supp \ff$, which is an open set and therefore $\ff$ is nonsingular.
\end{proof}

Using \cref{lem:bundle_being close to singular makes you unsingular}, the proof of \cref{thm:alg-sing-are-dense} is now easy. Indeed, suppose that \(\fa*\fb \in \redalg{\grpd}\) is the supremum norm limit of some \(\{\ff_n\}_{n \in \NN} \subseteq \algalg{\grpd}\).
Since \(\abs{\fa*\fb(y)} = 1\) for all \(y \in Y\), it follows that \(\abs{\ff_n(y)} \geq 1/2\) for all large enough \(n \in \NN\).
\cref{lem:bundle_being close to singular makes you unsingular} then implies that \(\ff_n\) is not singular, as desired.

%
%

\subsection{The groupoid \texorpdfstring{\(\grpd\)}{G} is a counterexample to the (essential) Baum--Connes Conjecture}
In this subsection we will prove that the groupoid \(\grpd\) is another counterexample to the Baum--Connes Conjecture. In fact, it is a counterexample to what we term as the \emph{essential} Baum--Connes Conjecture.
We refer the reader to \cite{higson-lafforgue-skandalis} for a short introduction to the Baum--Connes Conjecture and some of the previous counterexamples, and to \cite{tu-1999} for a treatise of this conjecture in the case when \(\grpd\) is Hausdorff.
In particular, it involves a \emph{reduced assembly map}
\begin{equation} \label{eq:red-assembly-map}
  \mu_\grpd^{\red} \colon K^{\rm top}_*\left(\grpd\right) \to K_*\left(\redalg{\grpd}\right),
\end{equation}
and asks whether it is an isomorphism of abelian groups. 
The authors of \cite{higson-lafforgue-skandalis} use several compatibility relations of \(\mu^{\red}_\grpd\) to deduce \cite{higson-lafforgue-skandalis}*{Proposition 2}, yielding a plethora of counterexamples to \eqref{eq:red-assembly-map} being \emph{surjective}.
These techniques are the main ideas we will use to prove that \(\grpd\) violates the Baum--Connes Conjecture.
In fact, see \cref{thm:baum-connes}, these ideas in \cite{higson-lafforgue-skandalis} will let us prove even more, that the composition of \(\mu_\grpd^{\red}\) with the map induced on \(K_0\) by the canonical quotient map \(\Lambda \colon \redalg{\grpd} \to \essalg \grpd\) is also not surjective.
Note that the counterexamples in \cite{higson-lafforgue-skandalis} are either with no nonzero singular functions (and thus \(\redalg{\grpd} = \essalg{\grpd}\)), or such that the composition
\begin{equation} \label{eq:ess-assembly-map}
  \mu_\grpd^{\ess} \colon K^{\rm top}_*\left(\grpd\right) \xrightarrow{\mu_\grpd^{\red}} K_*\left(\redalg \grpd\right) \xrightarrow{K_*\left(\Lambda\right)} K_*\left(\essalg{\grpd}\right)
\end{equation}
is surjective (see \cite{higson-lafforgue-skandalis}*{Section 2, 3rd Counterexample}). 

For the reader's convenience, and (not quite, but almost) following the notation of \cite{higson-lafforgue-skandalis}, we let:
\[
  F \coloneqq \left\{\epsilon\right\} \sqcup \left\{z_i : i \in \NN\right\} \quad \text{ and } \quad V \coloneqq \left\{x_{ij}, y_i : i,j \in \NN\right\}.
\]
Note that \(F\) is closed, \(V\) is open, and they partition \(\grpd^{(0)}\).
Likewise, let \(\grpd_F \coloneqq F \grpd F\) and \(\grpd_V \coloneqq V \grpd V\) be the restriction of \(\grpd\) to these groupoids.
It follows from arguments similar to those in \cite{BenAli}*{Corollary 4.4} or simply by routine checking, that the restriction map \(\rest_F \colon \stein{\grpd} \to \stein{\grpd_F}\) yields a short exact sequence
\[
  0 \to \stein{\grpd_V} \to \stein{\grpd} \to \stein{\grpd_F} \to 0.
\]
Using the same arguments as in \cite{higson-lafforgue-skandalis}*{pp.\ 334} we see that the above short exact \editB{sequence} induces a short exact sequence of the maximal \cstar{}algebras:
\begin{equation} \label{eq:max-short-exact-seq}
  0 \to \maxalg{\grpd_V} \to \maxalg{\grpd} \to \maxalg{\grpd_F} \to 0.
\end{equation}
The reduced \editB{and essential analogues} of the above \editB{are} not exact in the middle.
In fact, following ideas of \cites{higson-lafforgue-skandalis,rufus}, this can be detected at the level of \(K\)-theory.
\begin{proposition} \label[proposition]{prop:k-0-failure-exactness}
  Let \(\grpd, V\) and \(F\) be as above. Neither
  \begin{enumerate}
    \item \(K_0(\redalg{\grpd_V}) \to K_0(\redalg{\grpd}) \to K_0(\redalg{\grpd_F})\) nor
    \item \(K_0(\essalg{\grpd_V}) \to K_0(\essalg{\grpd}) \to K_0(\redalg{\grpd_F})\)
  \end{enumerate}
  is exact in the middle.
\end{proposition}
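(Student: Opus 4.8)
The plan is to produce one explicit $K_0$-class that witnesses both failures: the class $[\chi_B] \in K_0(\redalg\grpd)$ of the projection $\chi_B$ produced by scattering in \cref{cauchy-bundle-groups} (it is a projection since $B \subseteq \grpd^{(0)}$), together with its image $[\Lambda(\chi_B)] \in K_0(\essalg\grpd)$. The governing observation is that, although the \emph{function} $\chi_B$ vanishes identically on $\grpd_F$, the $C^*$-algebraic restriction $\rest_F^{\red}(\chi_B) \in C^*_{\red,\triv}(\grpd_F)$ is a nonzero ``ghost'' projection. To exploit this I would first identify $C^*_{\red,\triv}(\grpd_F)$: since $\grpd_F$ is the trivial bundle of groups $G \times \bar{\NN}$, one has $\redalg{\grpd_F} = \cont{\bar{\NN}} \otimes \redalg{G}$ and $C^*_{\red,\triv}(\grpd_F) = \cont{\bar{\NN}} \otimes C^*_{\red,\triv}(G)$, where $C^*_{\red,\triv}(G)$ is the completion of $\CC[G]$ in $a \mapsto \max\{\norm{\lambda_G(a)}, \abs{\pitriv(a)}\}$. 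Because $H \leq G$ is non-amenable, \cref{prop:nonamenable-groups-can-scatter} lets one adjust the trivial component by adding small scalar multiples of finitely supported probability measures of arbitrarily small reduced norm; this shows the canonical $*$-homomorphism $C^*_{\red,\triv}(G) \to \redalg{G} \oplus \CC$, $a \mapsto (\lambda_G(a), \pitriv(a))$, is an isometric isomorphism. Hence $C^*_{\red,\triv}(\grpd_F) \cong \cont{\bar{\NN}, \redalg{G}} \oplus \cont{\bar{\NN}}$, and the canonical quotient $\pi \colon C^*_{\red,\triv}(\grpd_F) \twoheadrightarrow \redalg{\grpd_F}$ becomes the projection onto the first summand.

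Next I would compute $\rest_F^{\red}(\chi_B)$ using continuity of $\rest_F^{\red}$ together with $\chi_B = \lim_n \fb_n$: one obtains $\rest_F^{\red}(\chi_B) = \lim_n \tfrac{1}{\abs{H_n}} \sum_{h \in H_n} \chi_{\{(0,h)\} \times F}$, which in the identification above equals $1_{\cont{\bar{\NN}}} \otimes \lim_n \tfrac{1}{\abs{H_n}} \sum_{h} (\lambda_G(0,h), 1)$. Since $\tfrac{1}{\abs{H_n}} \sum_h \lambda_G(0,h) \to 0$ by the choice of $\{H_n\}$, while $\pitriv$ is identically $1$ on group elements, this limit is $1_{\cont{\bar{\NN}}} \otimes (0 \oplus 1) =: \tilde p$, i.e.\ $\tilde p = 0 \oplus 1_{\cont{\bar{\NN}}}$ in the direct-sum decomposition. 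Consequently $\pi(\tilde p) = 0$, whereas $[\tilde p] = \big(0, [1_{\cont{\bar{\NN}}}]\big) \neq 0$ in $K_0(C^*_{\red,\triv}(\grpd_F))$, because evaluation at $\epsilon$ detects $[1_{\cont{\bar{\NN}}}] \neq 0$ in $K_0(\cont{\bar{\NN}})$.

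It remains to assemble the pieces via the rows of \cref{prop:big-commuting-diagram} and the six-term exact sequence in $K$-theory. For (1): the middle row is a short exact sequence $0 \to \redalg{\grpd_V} \to \redalg\grpd \xrightarrow{\rest_F^{\red}} C^*_{\red,\triv}(\grpd_F) \to 0$, so exactness at $K_0(\redalg\grpd)$ gives $\operatorname{im}\!\big(K_0(\redalg{\grpd_V}) \to K_0(\redalg\grpd)\big) = \ker\big((\rest_F^{\red})_*\big)$, which does not contain $[\chi_B]$ since $(\rest_F^{\red})_*[\chi_B] = [\tilde p] \neq 0$. On the other hand the map $\redalg\grpd \to \redalg{\grpd_F}$ of part (1) is $\pi \circ \rest_F^{\red}$, so it sends $[\chi_B]$ to $\pi_*[\tilde p] = 0$, and the composite $K_0(\redalg{\grpd_V}) \to K_0(\redalg\grpd) \to K_0(\redalg{\grpd_F})$ already vanishes at the level of $C^*$-algebras by exactness of the middle row. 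Hence the sequence is not exact in the middle. For (2) the argument is identical using the bottom row of \cref{prop:big-commuting-diagram}: commutativity of the diagram gives $\rest_F^{\ess} \circ \Lambda = \rest_F^{\red}$, whence $(\rest_F^{\ess})_*[\Lambda(\chi_B)] = (\rest_F^{\red})_*[\chi_B] = [\tilde p] \neq 0$, so $[\Lambda(\chi_B)] \notin \operatorname{im}\!\big(K_0(\essalg{\grpd_V}) \to K_0(\essalg\grpd)\big) = \ker\big((\rest_F^{\ess})_*\big)$, while its image in $K_0(\redalg{\grpd_F})$ is $\pi_*[\tilde p] = 0$.

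The main obstacle is the middle two steps. The conceptual heart is recognizing that $\rest_F^{\red}(\chi_B)$ is the nonzero ghost projection $\tilde p$ rather than $0$ — in stark contrast with the naive ``restrict the function'' picture — and the technical cost is pinning down the isomorphism $C^*_{\red,\triv}(\grpd_F) \cong \cont{\bar{\NN}, \redalg{G}} \oplus \cont{\bar{\NN}}$ (hence $\pi$ and $[\tilde p]$) precisely enough to read off both $\pi_*[\tilde p] = 0$ and $[\tilde p] \neq 0$. Once that is in place, the conclusion is a formal consequence of the exact sequences of \cref{prop:big-commuting-diagram} and the standard six-term sequence.
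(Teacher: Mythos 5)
Your proof is correct, and for the decisive step it takes a genuinely different route from the paper. Both arguments use the same witness class $[\chi_B]_0$ (resp.\ $[\chi_B + \singideal]_0$) and both observe that it dies in $K_0(\redalg{\grpd_F})$ because $\chi_B$ vanishes identically on $\grpd_F$; but to show the class is \emph{not} in the image of $K_0(\redalg{\grpd_V})$, the paper argues pointwise: it evaluates $K_0$ classes under $(\lambda_{y_i})_*$ and notes that $\lambda_{y_i}(\chi_B)$ is never $0$, whereas any class coming from $\redalg{\grpd_V} \cong \bigoplus_i \redalg{\grpd|_{A_i}}$ vanishes at cofinitely many $y_i$. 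You instead invoke half-exactness of $K_0$ for the middle (resp.\ bottom) row of \cref{prop:big-commuting-diagram}, so that the image equals $\ker (\rest_F^{\red})_*$, and then detect $[\chi_B]_0$ by computing its image in $C^*_{\red,\triv}(\grpd_F)$ to be the nonzero ghost projection $\tilde p = 0 \oplus 1_{\cont{\bar{\NN}}}$ under the identification $C^*_{\red,\triv}(\grpd_F) \cong \left(\cont{\bar{\NN}} \otimes \redalg{G}\right) \oplus \cont{\bar{\NN}}$ (your verification of that identification via scattering, and of $\rest_F^{\red}(\chi_B) = \tilde p$ by continuity, is sound). This also correctly resolves an apparent tension in the paper's wording: the sentence ``$\rest_F^{\red}(\chi_B) = 0$'' there can only refer to the composite into $\redalg{\grpd_F}$, since, as you show, the image in $C^*_{\red,\triv}(\grpd_F)$ is nonzero --- it must be, or else exactness of the middle row would put $\chi_B$ in $\redalg{\grpd_V}$ and the proposition would fail. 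The trade-off: the paper's evaluation argument is shorter and needs only that $\redalg{\grpd_V}$ is a $c_0$-direct sum, not the exactness of the rows of \cref{prop:big-commuting-diagram}; your argument costs the extra computation $C^*_{\red,\triv}(G) \cong \redalg{G} \oplus \CC$ but makes the ghost-projection mechanism of Higson--Lafforgue--Skandalis explicit and pinpoints exactly where the class $[\chi_B]_0$ lives in the boundary algebra.
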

\begin{proof}
  The argument here is essentially the same as the one in \cite{higson-lafforgue-skandalis}*{Proposition 3}.

  By \cref{cauchy-bundle-groups} the function \(\chi_B\) is a projection belonging to \(\redalg{\grpd}\).
  Recalling that \(B \coloneqq X \sqcup Y\) it follows that \(\rest_F^{\red}(\chi_B) = 0\), so \([\chi_B]_0\) does certainly belong to the kernel of the right map in the first sequence.
  It is thus enough to prove that it is not in the image of \(K_0(\redalg{\grpd_V}) \to K_0(\redalg{\grpd})\), but this is clear, for \(\lambda_{y_i}([\chi_B]_0)\) is never \(0\), whereas \(\lambda_{y_i}([\fa]_0)\) is \(0\) for co-finitely many \(i \in \NN\) whenever \(\fa \in \redalg{\grpd_V}\).

  The argument for the second item is fundamentally the same. Indeed, note that \(\chi_B + \singideal \in \essalg{\grpd}\) is a projection whose \(K_0\) class is never \(0\) on any \editB{\(\{x_{ij} : i, j \in \NN\}\)}.
  However, the elements in the image of \(K_0(\essalg{\grpd_V})\) are \(0\) on co-finitely many \(\{y_i : i \in \NN\}\).
\end{proof}

\begin{theorem} \label[theorem]{thm:baum-connes}
  If \(H\) has the Haagerup property then \(\grpd\) is a (non-Hausdorff) counterexample to the Baum--Connes Conjecture. In fact, neither \(\mu_\grpd^{\red}\) \eqref{eq:red-assembly-map} nor \(\mu_{\grpd}^{\ess}\) \eqref{eq:ess-assembly-map} are surjective. 
\end{theorem}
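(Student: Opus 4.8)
The plan is to deduce \cref{thm:baum-connes} from \cref{prop:k-0-failure-exactness} in the same way that Higson--Lafforgue--Skandalis deduce \cite{higson-lafforgue-skandalis}*{Proposition 3} from a failure of exactness in $K$-theory, the only new ingredient being that the Haagerup property of $H$ makes one comparison map at the level of $\grpd_F$ injective on $K_0$.

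I would begin with the factorisation of the two assembly maps through the maximal one. Writing $\pi\colon\maxalg{\grpd}\to\redalg{\grpd}$ and $q\coloneqq\Lambda\circ\pi\colon\maxalg{\grpd}\to\essalg{\grpd}$ for the canonical surjections, naturality of the Baum--Connes assembly map gives $\mu_\grpd^{\red}=K_*(\pi)\circ\mu_\grpd^{\full}$ and $\mu_\grpd^{\ess}=K_*(q)\circ\mu_\grpd^{\full}$, where $\mu_\grpd^{\full}\colon K^{\rm top}_*(\grpd)\to K_*(\maxalg{\grpd})$ is the maximal assembly map. Hence $\operatorname{Im}\mu_\grpd^{\red}\subseteq\operatorname{Im}K_0(\pi)$ and $\operatorname{Im}\mu_\grpd^{\ess}\subseteq\operatorname{Im}K_0(q)$, so --- and this is the whole point --- it is enough to produce classes that are \emph{not} hit by $K_0(\pi)$, resp.\ by $K_0(q)$. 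The natural candidates are the projection $[\chi_B]_0\in K_0(\redalg{\grpd})$ from \cref{cauchy-bundle-groups} and its image $[\chi_B+\singideal]_0\in K_0(\essalg{\grpd})$ (note $B=X\sqcup Y$ is open, so $\chi_B\notin\singideal$ and $[\chi_B+\singideal]_0$ is a genuine class).

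Next I would isolate where the hypothesis on $H$ is used. Since $H$ is a-T-menable, so is $G=\ZZ_2\times H$, and hence by Higson--Kasparov the canonical map $\maxalg{G}\to\redalg{G}$ is a $K$-isomorphism. As $\grpd_F$ is the trivial bundle $\bar{\NN}\times G$, one has $\maxalg{\grpd_F}=C(\bar{\NN})\otimes\maxalg{G}$ and $\redalg{\grpd_F}=C(\bar{\NN})\otimes\redalg{G}$, and since $\bar{\NN}$ is totally disconnected the comparison map $\pi_F\colon\maxalg{\grpd_F}\to\redalg{\grpd_F}$ is an inductive limit of finite direct sums of copies of $\maxalg{G}\to\redalg{G}$; in particular $K_0(\pi_F)$ is injective. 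This is the only place a-T-menability of $H$ enters --- for $H$ with property $(T)$ it fails, via a Kazhdan projection in $\maxalg{G}$ --- which is the structural reason the theorem is only claimed under the Haagerup hypothesis.

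With this in hand the diagram chase is routine. Suppose $[\chi_B]_0=K_0(\pi)(\xi)$ for some $\xi\in K_0(\maxalg{\grpd})$. Restricting to $\grpd_F$ (in the honest sense, landing in $\redalg{\grpd_F}$) and using the commuting square from \cref{prop:big-commuting-diagram}, $K_0(\pi_F)$ sends the image of $\xi$ in $K_0(\maxalg{\grpd_F})$ to the image of $[\chi_B]_0$ in $K_0(\redalg{\grpd_F})$, which is $0$ because $B\subseteq V$ forces $\chi_B$ to restrict to $0$ on $\grpd_F$ (this is the computation already carried out in the proof of \cref{prop:k-0-failure-exactness}). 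Injectivity of $K_0(\pi_F)$ therefore kills the image of $\xi$ in $K_0(\maxalg{\grpd_F})$, and exactness of the maximal sequence \eqref{eq:max-short-exact-seq} expresses $\xi$ through $K_0(\maxalg{\grpd_V})$; pushing this back through the commuting left square of \cref{prop:big-commuting-diagram} places $[\chi_B]_0$ in the image of $K_0(\redalg{\grpd_V})\to K_0(\redalg{\grpd})$, contradicting \cref{prop:k-0-failure-exactness}(1). Hence $\mu_\grpd^{\red}$ is not surjective, and the essential statement follows by the verbatim argument with $q$ in place of $\pi$, the third row of \cref{prop:big-commuting-diagram} in place of the second, and $\essalg{\grpd_V}=\bigoplus_{i=1}^\infty C(\bar{\NN})$ in place of $\redalg{\grpd_V}$, invoking \cref{prop:k-0-failure-exactness}(2) at the end. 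The main obstacle in writing this up carefully is not the chase itself but the bookkeeping behind the third paragraph: verifying $\grpd_F\cong\bar{\NN}\times G$ with $\pi_F$ the corresponding amplification, and keeping track of the fact that the relevant restriction maps are the honest ones landing in $\redalg{\grpd_F}$ (on which $\chi_B$ vanishes), \emph{not} the maps into $C^*_{\red,\triv}(\grpd_F)$ appearing in \cref{prop:big-commuting-diagram} (on which it does not).
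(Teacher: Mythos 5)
Your argument is correct and is essentially the paper's own proof with the Higson--Lafforgue--Skandalis machinery unpacked: the paper gets non-surjectivity of \(\mu^{\red}_\grpd\) by citing \cite{higson-lafforgue-skandalis}*{Lemma 1 and Proposition 2} together with \cref{prop:k-0-failure-exactness}(1), and handles \(\mu^{\ess}_\grpd\) by the same diagram chase through \(\maxalg{\grpd}\) that you carry out, with the Haagerup hypothesis entering exactly where you place it --- to make \(K_*(\maxalg{\grpd_F}) \to K_*(\redalg{\grpd_F})\) injective (the paper invokes \cite{tu-1999}*{Thm.~0.1} for the groupoid \(\grpd_F\), you invoke Higson--Kasparov for \(G\) plus the identification \(\grpd_F \cong \bar{\NN} \times G\); these amount to the same thing). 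Your closing remark distinguishing the honest restriction into \(\redalg{\grpd_F}\) (on which \(\chi_B\) vanishes) from the map into \(C^*_{\red,\triv}(\grpd_F)\) appearing in \cref{prop:big-commuting-diagram} (on which it does not) is precisely the right point to be careful about, and is consistent with how \cref{prop:k-0-failure-exactness} is stated.
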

\begin{proof}
  Non-surjectivity of \(\mu_\grpd^{\red} \colon K^{\rm top}_*(\grpd) \to K_*(\redalg{\grpd})\) is an immediate consequence of \cite{higson-lafforgue-skandalis}*{Lemma 1 and Proposition 2} and the first item of \cref{prop:k-0-failure-exactness}.
  On the other hand, \(\mu_{\grpd}^{\ess} \colon K^{\rm top}_*(\grpd) \to K_*(\essalg{\grpd})\) is non-surjective essentially for the same reason, so we only sketch the proof.
  Note that \(\bar{X_i}\) is a closed subgroupoid of \(\grpd\), and that the diagram
  \begin{center}
    \begin{tikzcd}
      & K^{\rm top}_*\left(\grpd\right) \arrow[d]{}{\mu^{\max}_{\grpd}} \arrow[r]{}{} & K^{\rm top}_*\left(\grpd_F\right) \arrow[equal,d]{}{} \\
      K_*\left(\maxalg{\grpd_V}\right) \arrow[d]{}{} \arrow[r]{}{} & K_*\left(\maxalg{\grpd}\right) \arrow[d]{}{} \arrow[r]{}{} & K_*\left(\maxalg{\grpd_F}\right) \arrow[equal,d]{}{} \\
            K_*\left(\oplus_{n \in \NN}\cont{\bar{\NN}}\right) \arrow[r]{}{} & K_*\left(\essalg{\grpd}\right) \arrow[r]{}{} & K_*\left(\redalg{\grpd_F}\right)
    \end{tikzcd}
  \end{center}
  commutes (the equalities on the right column are a consequence of \editB{the Haagerup property of \(H\) and} \cite{tu-1999}*{Thm.~0.1}). Noting that \(\essalg{\grpd_V} = \oplus_{n \in \NN} \cont{\bar{\NN}}\), the claim follows from the second item in \cref{prop:k-0-failure-exactness} by trying (and failing) to trace \([\chi_B + \singideal]_0 \in K_0(\essalg{\grpd})\) back to \(K^{\rm top}_*(\grpd)\) in the previous diagram.
\end{proof}

Observe that \cite{higson-lafforgue-skandalis}*{Section 5} proves there are Hausdorff \'etale groupoids which do not satisfy the Baum--Connes Conjecture. Likewise, there are non-Hausdorff counterexamples in \cite{higson-lafforgue-skandalis}*{Sections 2,3 and 4}.
However, the counterexamples there either have no singular functions or \(\mu_{\grpd}^{\ess}\) is surjective.\footnote{\, The fact that \(\mu_{\grpd}^{\ess}\) is surjective for the ``\(\FF_2\)''-headed snake in \cite{higson-lafforgue-skandalis}*{Section 2, 3rd Counterexample} follows since \(\essalg{\grpd} = \cont{\bar{\NN}}\), and a routine \(6\)-term exact sequence argument applied to \(0 \to \contz{\NN} \to \cont{\bar{\NN}} \to \CC \to 0\).}
Thus, \cref{thm:baum-connes} slightly improves the results of \cite{higson-lafforgue-skandalis}.

\section{Second example: a minimal and effective groupoid} \label{sec:minimal-effective}

We now move on to the second and last example of the paper, which is a groupoid associated to a self-similar action in the sense of \cref{subsec:selfsim_prelim}. This is more involved than the one in \cref{sec:bundle-groups}, but follows the same basic strategy (outlined in \cref{sec:strategy}). 

Let $F \coloneqq F(a,b)$ be the free group on two generators $a$ and $b$, and let $H$ be any countable non-amenable group.
Let $G \coloneqq H \times F \times \ZZ \times \ZZ$, and let $K \coloneqq H \times F \times \ZZ$. For ease of notation, when it does not cause confusion, we identify the groups $H, F$ and $\ZZ^2$ with their natural images in $G$ and view them as subgroups. We define a (faithful) self-similar action of $G$ on the alphabet $\ZZ \sqcup \ZZ \sqcup K \sqcup K$.
\cref{fig:tree-drawings} is a pictorial depiction of the action. The precise definition below involves quite a bit of notation, we encourage the reader to think over how this corresponds to \cref{fig:tree-drawings}.

\begin{figure}[h]
  \includegraphics[width=0.45\linewidth]{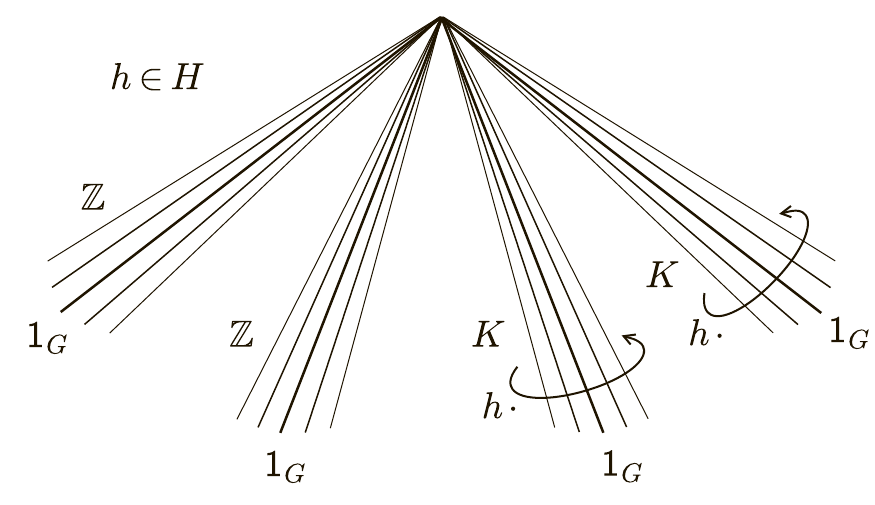}
  \begin{tabular}{cc}
    \includegraphics[width=0.45\linewidth]{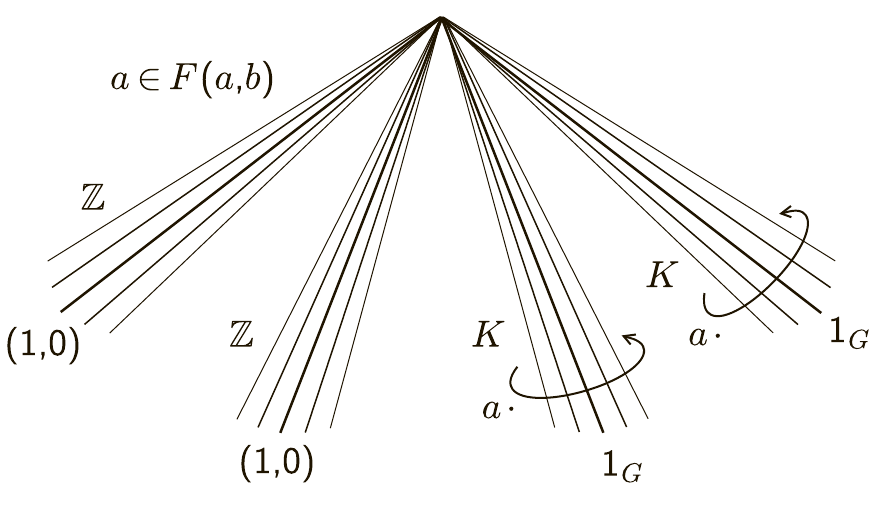} & \includegraphics[width=0.45\linewidth]{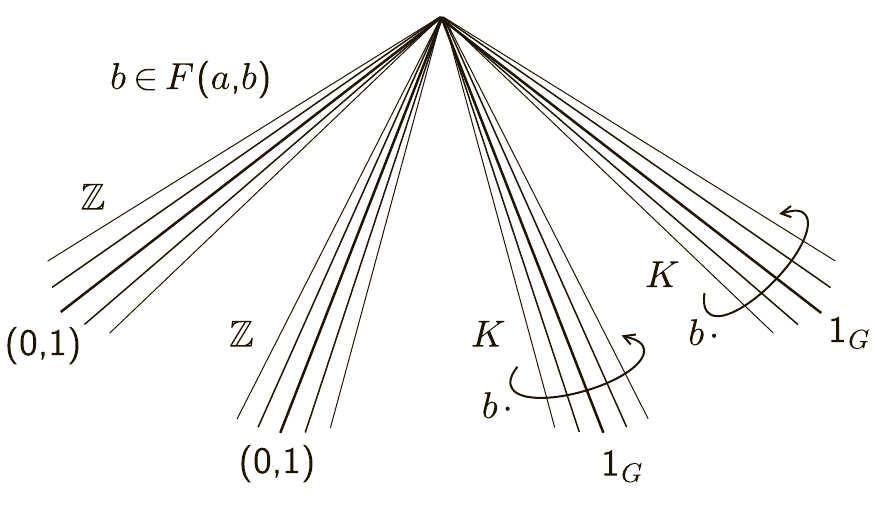}\\
    \includegraphics[width=0.45\linewidth]{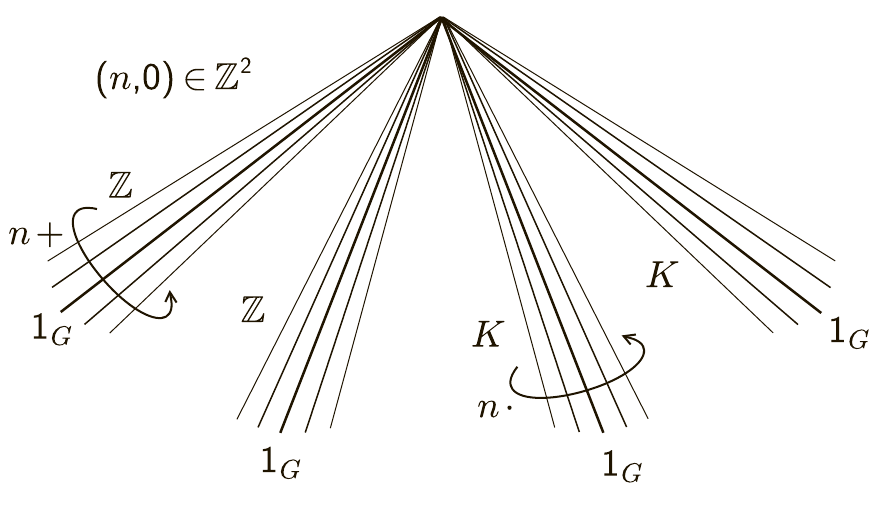} & \includegraphics[width=0.45\linewidth]{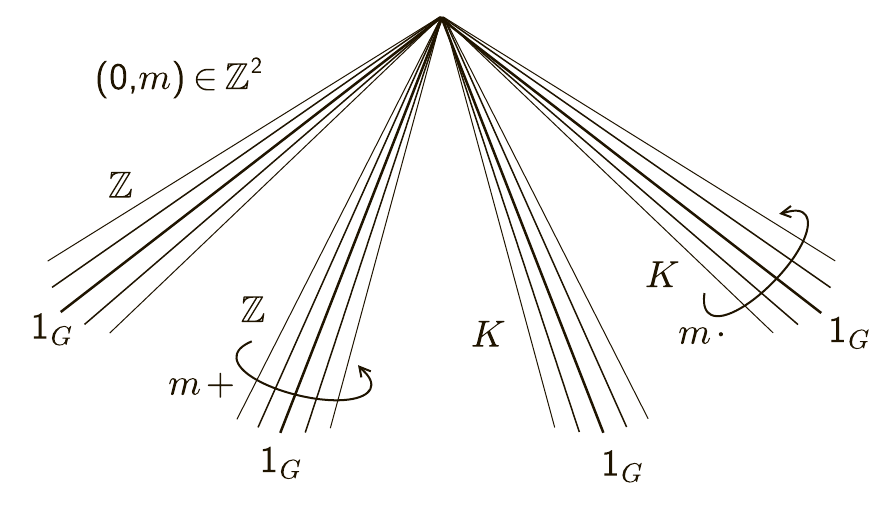}
  \end{tabular}
  \caption{The self-similar action of $G$, depicting how different elements of $G$ act on letters (viewed as the first level of the tree associated to finite words in \(X\)), and what their sections are at each letter.}
  \label{fig:tree-drawings}
\end{figure}  

Let $\tau \colon G \to G$ denote the morphism which maps
$$\begin{array}{l}
(h, a, n,m) \mapsto (1_H, 1_F, 1,0) \\
(h, b, n,m) \mapsto (1_H, 1_F, 0,1)
\end{array}$$
for any $h \in H$ and $(n,m) \in \ZZ^2$. In other words, $\tau$ is simply the abelianization map $F \to \ZZ^2$ extended to $G$.
Let furthermore $\pi_1 \colon G \to K$ be the obvious morphism with kernel $\{1_H\} \times \{1_F\} \times \{0\} \times \ZZ$ and $\pi_2 \colon G \to K$ the morphism with kernel $\{1_H\} \times \{1_F\} \times \ZZ \times \{0\}$.
Let $\zeta_1 \colon G \to \ZZ$ be the morphism with kernel $H \times F \times  \{0\} \times \ZZ$  and $\zeta_2\colon G \to \ZZ$ the morphism with kernel $H \times F \times \ZZ \times \{0\}$. These maps are depicted in
\cref{fig:maps}.

\begin{figure} 
  \begin{tikzpicture}[
    every node/.style={font=\Large},
    text height=1.5ex, text depth=.25ex,
    arrow/.style={-{Straight Barb}, thick},
    node distance=2.5cm and 3.5cm
    ]
    
    \node (G) at (0,6) {$G = H \times {F}(\textcolor{teal}{a}, \textcolor{orange}{b}) \times \textcolor{purple}{\mathbb{Z}} \times \textcolor{blue}{\mathbb{Z}}$};
    
    \node (H1) at (-4,4) {\raisebox{-3pt}{$K = H \times {F}(\textcolor{teal}{a}, \textcolor{orange}{b}) \times \textcolor{purple}{\mathbb{Z}}$}};
    \node (H2) at (4,4)  {\raisebox{-3pt}{$K = H \times {F}(\textcolor{teal}{a}, \textcolor{orange}{b}) \times \textcolor{blue}{\mathbb{Z}}$}};
    
    \node (Z1) at (-4,2) {\raisebox{-3pt}{$\mathbb{Z} = 1_H \times 1_F \times \textcolor{purple}{\mathbb{Z}}$}};
    \node (Z2) at (4,2)  {\raisebox{-3pt}{$\mathbb{Z} = 1_H \times 1_F \times \textcolor{blue}{\mathbb{Z}}$}};
    
    \node (Center) at (0,1) {$ 1_H  \times \textcolor{teal}{\mathbb{Z}} \times \textcolor{orange}{\mathbb{Z}} \times  (\textcolor{purple}{0},\textcolor{blue}{0})  \cong \textcolor{purple}{\mathbb{Z}} \times \textcolor{blue}{\mathbb{Z}} \leq G$};
    
    
    \draw[arrow] (G) -- node[above left=1pt] {$\pi_1$} (H1);
    \draw[arrow] (G) -- node[above right=1pt] {$\pi_2$} (H2);
    
    \draw[arrow] (G) to[bend left] node[left=2pt] {$\zeta_1$} (Z1);
    \draw[arrow] (G) to[bend right] node[right=2pt] {$\zeta_2$} (Z2);
    
    \draw[arrow] (G) -- node[right=2pt] {$\tau$} (Center);
    
    
  \end{tikzpicture}
  \caption{Maps used to define the self-similar action of $G$.}
  \label{fig:maps}
\end{figure}

Let $Y \coloneqq \{y_n^1: n \in \ZZ\} \cup \{y_n^2 : n \in \ZZ\}$, and $Z \coloneqq \{z_k^1: k \in K\} \cup \{z_k^2 : k \in K\}$.
We define the alphabet as $X \coloneqq Y \cup Z$ and denote the set of finite words over $X$ by $X^*$. The self-similar action of $G$ on $X^*$ is defined as follows:
\begin{equation} \label{def:self-similar-action}
  \begin{array}{ccc}
    g(y_n^1) \coloneqq y_{\zeta_1(g)+ n}^1; & \quad g(y_n^2) \coloneqq y_{\zeta_2(g)+ n}^2; & \quad g|_{y^1_n} = g|_{y^2_n} \coloneqq \tau(g); \vspace{0.5em}\\
    g(z_k^1) \coloneqq z_{\pi_1(g)\cdot k}^1; & \quad g(z_k^2) \coloneqq z_{\pi_2(g)\cdot k}^2; & \quad g|_{z^1_k}=g|_{z^2_k} \coloneqq 1_G. 
  \end{array}
\end{equation}
In particular, the action of $G$ is faithful even on $Z$, as $\ker \pi_1 \cap \ker \pi_2 = \{1_G\}$, and hence also on $X^*$. As usual, we extend the self-similar action of $G$ on $X^*$ to the set of (right) infinite words $X^\omega$ in the obvious way.
Recall that we say that $g$ \emph{strongly fixes} a word $\gamma \in X^*$ if $g(\gamma)=\gamma$ and $g|_\gamma=1_G$. Let $\grpd$ be the ample groupoid associated to the self-similar action as defined in \cref{subsec:selfsim_prelim}. Recall that two germs $[s, w]$, $[t, w]$ of $\grpd$ are equal if there is some prefix $\gamma$ of $w$ with $s\gamma=t\gamma$.

\begin{proposition}
The groupoid \(\grpd\) is minimal and effective.
\end{proposition}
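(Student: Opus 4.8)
The goal is to verify two dynamical properties of the germ groupoid $\grpd$ attached to this self-similar action: minimality (every orbit is dense in the unit space $X^* \cup X^\omega$) and effectiveness ($\operatorname{int}(\operatorname{Iso}\grpd) = \grpd^{(0)}$). For minimality, recall that in the germ groupoid of a self-similar group, the orbit of a word $w$ contains $g(v)$ for every $g \in G$ and every prefix-restriction, and basic open sets are the cylinders $D(\alpha)$ (intersected with finitely many complements $D(\alpha\beta)^c$). So I would show that for any finite word $\alpha \in X^*$, the orbit of any $w \in X^* \cup X^\omega$ meets $D(\alpha)$. The key observation is that the action on each ``coordinate'' of the alphabet is transitive in a strong sense: on the $y^1$-letters $G$ acts through $\zeta_1 \colon G \to \ZZ$ which is surjective, so $G$ acts transitively on $\{y^1_n : n \in \ZZ\}$; similarly on $\{y^2_n\}$ through $\zeta_2$; on $\{z^1_k : k \in K\}$ through $\pi_1 \colon G \to K$ which is surjective (so transitively); and on $\{z^2_k\}$ through $\pi_2$. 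Hence $G$ acts transitively on the alphabet $X$. Moreover the sections $g|_x$ range over all of $\tau(G) = \ZZ^2$ (for $y$-letters) or are trivial (for $z$-letters), and because the whole of $G$ can still be reached from $\ZZ^2 \le G$ after a further step — indeed one must check that starting from $\tau(G)=\ZZ^2$ and applying further restrictions/translations one can navigate any cylinder — one can steer $w$ into any given cylinder $D(\alpha)$ letter by letter. More precisely: given $\alpha = x_1 \cdots x_n$, pick $g_1 \in G$ with $g_1(w_1) = x_1$ (possible by transitivity on $X$); then one needs $g_1|_{w_1}$ to be ``large enough'' to continue, which is why the construction routes $y$-restrictions into $\ZZ^2$ and then uses the $\ZZ^2 \le G$ embedding together with the fact that $\zeta_1,\zeta_2$ restricted to $\ZZ^2$ are the two coordinate projections, again surjective onto $\ZZ$. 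Iterating, one lands any word into $D(\alpha)$, so orbits are dense.

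For effectiveness, I would show that the interior of the isotropy bundle is exactly the unit space, equivalently that no nonempty open bisection other than (subsets of) the unit space consists entirely of germs $[g,w]$ with $g(w)=w$. Since a basis of open bisections is given by $(\,g, U)$ for $g \in G$ (really for $s = \alpha g \beta^*$, but the isotropy question reduces to the ``diagonal'' elements $g \in G$ acting on cylinders) and basic compact open $U \subseteq D(s^*s)$, it suffices to show: for every $g \in G \setminus \{1_G\}$ and every nonempty cylinder-type open set $U \subseteq X^\omega$, there is a point $w \in U$ with $[g,w] \ne [1,w]$, i.e. $g$ does not strongly fix $w$ and is not ``eventually trivial'' along $w$. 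Here is where the product structure $G = H \times F \times \ZZ \times \ZZ$ does the work. Write $g = (h, f, n, m)$. If the $F$-component $f \ne 1_F$ or the $H$-component $h \ne 1_H$, then since the restrictions at $y$-letters are $\tau(g)$, which kills $H$ and $F$, the only hope for $g$ to fix a $z$-letter would require $\pi_1(g)$ or $\pi_2(g)$ to fix $k$, but $\pi_1,\pi_2$ are injective on the $H\times F$ part and the left-multiplication action of $K$ on itself is free — so $g$ moves \emph{every} $z^1_k$ and $z^2_k$ and hence moves the first letter of any word beginning with a $z$-letter; choosing $w \in U$ with a $z$-letter available as first (or some) letter shows $g$ doesn't strongly fix $w$. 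If instead $h = 1_H$, $f = 1_F$ but $(n,m) \ne (0,0)$, then $g \in \ZZ^2 \le G$, and now $g$ acts on $y^1$-letters by translation by $\zeta_1(g) = n$ and on $y^2$-letters by translation by $\zeta_2(g) = m$, so if $n \ne 0$ (resp. $m \ne 0$) it moves every $y^1$-letter (resp. $y^2$-letter); and crucially $g|_{y^i_j} = \tau(g) = g$ itself (since $\tau|_{\ZZ^2} = \id$), so even if one letter happened to be fixed the restriction is again the nontrivial $g$, and iterating we find a letter it moves. Either way $g$ fails to strongly fix some $w \in U$, so $(g,U)$ is not contained in the isotropy, giving $\operatorname{int}(\operatorname{Iso}\grpd) \subseteq \grpd^{(0)}$; the reverse inclusion is automatic.

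\textbf{Main obstacle.} The genuinely delicate point is effectiveness in the case $g \in \ZZ^2 \setminus \{0\}$, because $\ZZ^2$ is abelian and acts on the $y$-letters merely by translation while its restrictions there stay inside $\ZZ^2$ (indeed stay equal to $g$) — so one must rule out the possibility that $g$ strongly fixes an infinite word built entirely from $y$-letters. But a translation by a nonzero integer has \emph{no} fixed letters at all, so there is simply no letter it fixes, let alone strongly; the only subtlety is bookkeeping: ensuring the open set $U$ one is handed actually contains a word admitting a $y^1$-letter (if $n \ne 0$) or $y^2$-letter (if $m\ne 0$) at the relevant position, which follows because the cylinders defining $U$ always leave infinitely many letters available at every level. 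The other steps — transitivity of $G$ on $X$ via surjectivity of $\zeta_i, \pi_i$, and navigating cylinders for minimality — are routine once the surjectivity and freeness facts are spelled out. I would organize the write-up as: (1) $G$ acts transitively on $X$ and the restrictions at $y$-letters realize all of $\ZZ^2 \le G$; (2) deduce minimality by steering into cylinders; (3) case analysis on $g = (h,f,n,m) \ne 1_G$ for effectiveness, using freeness of $K \curvearrowright K$ and fixed-point-freeness of nonzero translations of $\ZZ$.
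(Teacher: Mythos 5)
There are genuine gaps in both halves. For minimality, your key claim that \(G\) acts transitively on \(X\) is false: by \eqref{def:self-similar-action} the action preserves each of the four families \(\{y^1_n\}\), \(\{y^2_n\}\), \(\{z^1_k\}\), \(\{z^2_k\}\) (a \(y^1\)-letter is always sent to a \(y^1\)-letter, etc.), so \(G\) has four orbits on \(X\); worse, the ``type'' of every position of a word is preserved by \(G\) and by all sections, so no amount of letter-by-letter steering with elements of \(G\) moves a word supported on \(y^1\)-letters into a cylinder \(D(z^1_k)\). Minimality cannot come from the \(G\)-action at all: it comes from the full inverse monoid \(S\), whose elements \(\alpha g\beta^*\) with \(\alpha\neq\beta\) let you \emph{prepend} prefixes. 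The orbit of any \(w\) contains \(\alpha w\) for every \(\alpha\in X^*\), and since \(X\) is infinite one can insert one extra letter to avoid the finitely many excluded subcylinders in a basic open set \(D(\alpha)\cap D(\alpha\beta_1)^c\cap\dots\cap D(\alpha\beta_n)^c\). You mention ``prefix-restriction'' but never use prepending, which is the only mechanism that works here. (The paper outsources both properties to \cite{BenNora21}*{Proposition 6.1}, which packages exactly this.)

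For effectiveness, your reduction is both logically inverted and false as a statement. Effectiveness requires: if \((g,U)\subseteq\operatorname{Iso}\grpd\) (i.e.\ \(g\) fixes \(U\) pointwise) then all germs \([g,w]\), \(w\in U\), are units; showing instead that every \((g,U)\) contains a non-unit germ is not sufficient (and, combined with a pointwise-fixed open set, would actually \emph{disprove} effectiveness). Moreover the claim ``for every \(g\neq 1_G\) and every nonempty \(U\) there is \(w\in U\) with \([g,w]\neq[1,w]\)'' fails: take \(g=(h,1_F,0,0)\) with \(h\neq 1_H\); then \(\zeta_1(g)=\zeta_2(g)=0\) and \(\tau(g)=1_G\), so \(g\) strongly fixes every \(y\)-letter, hence \([g,w]=[1,w]\) for \emph{every} \(w\in D(y^1_0)\). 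Your proposed repair --- choose \(w\in U\) with a \(z\)-letter ``at some position'' --- does not work, because the element acting at position \(\geq 2\) is the section \(g|_{\gamma}\) (which is \(\tau(g)\) or \(1_G\)), not \(g\) itself. The correct argument, which your first case essentially contains, is the paper's: since \(\ker\pi_1\cap\ker\pi_2=\{1_G\}\) and \(K\) acts freely on itself by left multiplication, any \(g\neq 1_G\) moves \emph{every} letter in at least one of the infinite families \(\{z^1_k\}\), \(\{z^2_k\}\); hence no \(g\neq 1_G\) strongly fixes a cofinite set of letters, and effectiveness follows from \cite{BenNora21}*{Proposition 6.1} (a reduction you would otherwise have to prove, since it is exactly the step your ``it suffices to show'' mis-states).
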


\begin{proof}
By \cite{BenNora21}*{Proposition 6.1} it suffices to show that whenever $g \in G$ strongly fixes a cofinite set of letters in $X$, it is necessarily the identity. Observe that if $g \neq 1_G$ then at least one of $\pi_1(g)$ or $\pi_2(g)$ is not $1_H$, hence $g$ does not fix any letter in at least one of the infinite sets $\{z_k^1: k \in K\}$ or $\{z_k^2: k \in K\}$.
\end{proof}

\begin{theorem} \label[theorem]{thm:alg-sing-minimal-effective}
The algebraic singular ideal $\asingideal = \singideal \cap \algalg{\grpd}$ is not dense in the singular ideal $\singideal$.
In particular, neither is $\singideal \cap \steinalg{\grpd}$ dense in $J$.
\end{theorem}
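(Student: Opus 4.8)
The plan is to run the three-step strategy of \cref{sec:strategy}, isolating only the ingredients particular to this $\grpd$. For $g \in G$ write $u_g \coloneqq \chi_{(g, D(g^*g))} \in \steinalg{\grpd}$ for the canonical unitary implementing $g$ (since $g = \epsilon g \epsilon^* \in S$ and $D(g^*g) = \grpd^{(0)}$ is compact, $u_g$ really lies in $\steinalg\grpd$); recall $u_g u_h = u_{gh}$ and $u_{1_G} = \chi_{\grpd^{(0)}}$. For \emph{Step 1 (scattering)}, non-amenability of $H$ gives via \cref{prop:nonamenable-groups-can-scatter} finite sets $H_n \subseteq H$ with $\frac{1}{\abs{H_n}}\sum_{h \in H_n} h \to 0$ in $\redalg{H}$. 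I would take $\CB_n \coloneqq \{(h, D(h^*h)) : h \in H_n\}$ and $B \coloneqq \bigcup_{n \in \ZZ}\bigl(D(y_n^1) \cup D(y_n^2)\bigr)$, the open, non-compact subset of $\grpd^{(0)}$ of words beginning with a letter of $Y$. Since $\zeta_1(h) = \zeta_2(h) = 0$ and $\tau(h) = 1_G$ for every $h \in H$, each $h$ strongly fixes all letters of $Y$, so $(h,D(h^*h))$ contains the germ $[1_G,w]$ whenever $w \in B$, while $H$ acts freely on $Z$, so distinct $h,h'\in H_n$ give bisections meeting exactly in $\{[1_G,w] : w \in B\}\cong B$. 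Then $\fb_n \coloneqq \frac{1}{\abs{H_n}}\sum_{h\in H_n}u_h \to \chi_B$ in $\supnorm{\cdot}$ as in \cref{sec:strategy}; for $\norm{\cdot}$ one observes that, for each unit $w$, $h \mapsto \lambda_w(u_h)$ is the permutation representation of the $H$-action $h\cdot\gamma \coloneqq [h,r(\gamma)]\gamma$ on $\grpd w$, which decomposes as a sum of copies of the regular representation (the orbits whose sources do not begin with a letter of $Y$, where $H$ acts freely) and of the trivial representation (the remaining, singleton, orbits). Hence $\norm{\lambda_w(\fb_n - \fb_m)} \le \rednorm{\frac{1}{\abs{H_n}}\sum_{H_n}h} + \rednorm{\frac{1}{\abs{H_m}}\sum_{H_m}h}$ uniformly in $w$, so $\{\fb_n\}$ is Cauchy and \cref{cauchy-plus-convergent} yields $\fb_n \to \fb \coloneqq \chi_B \in \redalg{\grpd}$.

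For \emph{Step 2 (a $B$-singular function)}, let $a,b$ be the free generators of $F \le G$ and set
\[
  \fa \coloneqq (u_a - u_{1_G})(u_b - u_{1_G}) = u_{ab} - u_a - u_b + u_{1_G} \in \steinalg{\grpd}.
\]
Then $\fa*\fb = \fa*\chi_B$ is the restriction $\fa|_{\grpd B}$ of $\fa$ to arrows with source in $B$ (as in the identity $\ff * \chi_U = \ff|_{\grpd U}$). The key claim to establish is that $\fa|_{\grpd B}$ is supported on the set of germs lying over single-letter words of $Y$. Indeed, since $\zeta_1,\zeta_2$ vanish on $F$, each of $a,b,ab,1_G$ fixes the first letter of $w \in B$ and leaves there the section $\tau(a)=(1,0)$, $\tau(b)=(0,1)$, $\tau(ab)=(1,1)$, or $\tau(1_G)=(0,0)\in\ZZ^2$; a short case distinction on the type ($1$ or $2$) of the \emph{second} letter of $w$ then shows that exactly two of the four germs $[a,w],[b,w],[ab,w],[1_G,w]$ coincide and the coefficients of $\fa$ cancel in pairs, so $(\fa*\fb)(\gamma)=0$ for every germ $\gamma$ over a word of $B$ with more than one letter; for $w = y_n^i$ a single letter the four germs are pairwise distinct (their pairwise quotients have nonzero image under $\tau$, hence strongly fix no prefix of $w$) and $\fa*\fb$ takes the values $-1,-1,1,1$ at $[a,y_n^i],[b,y_n^i],[ab,y_n^i],y_n^i$. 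Thus $\supp{\fa*\fb}$ sits inside the nowhere-dense set of germs over single-letter words, so $\fa*\fb\in\singideal$, while $\fa*\fb(y_n^i)=1$ for all $n,i$. In addition, $\fa*\fb_n = \frac{1}{\abs{H_n}}\sum_h(u_{abh}-u_{ah}-u_{bh}+u_h)\in\steinalg\grpd$ is \emph{not} singular for any $n$: over words beginning with a letter of $Z$ its support contains honest open bisections (e.g.\ $\{[ah,w] : w \in D(z_k^1)\}$), because $a$ acts freely on $\{z_k^1 : k \in K\}$; this is the ``large support outside $B$'' required by the strategy.

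For \emph{Step 3 ($\fa*\fb \notin \ol{\asingideal}$)}, since $\fa*\fb(y_n^i)=1$ and $\supnorm{\cdot}\le\norm{\cdot}$ (\cref{cauchy-plus-convergent}), any $\fg \in \algalg{\grpd}$ with $\norm{\fg - \fa*\fb}<\tfrac12$ has $\abs{\fg(y_n^i)}>\tfrac12$ for all $n,i$; but $\fg|_{\grpd^{(0)}}$ is continuous on the Hausdorff unit space (locally constant when $\fg\in\steinalg\grpd$), so $\fg$ is nonzero on an open neighbourhood of $y_n^i$ in $\grpd^{(0)}$, which is open in $\grpd$, whence $\supp{\fg}$ has nonempty interior and $\fg$ is not singular. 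Hence $\asingideal$ is not dense in $\singideal$, and \emph{a fortiori} neither is $\singideal \cap \steinalg{\grpd}$; following the strategy, this step is presented first for $\steinalg\grpd$ (via local constancy of $\fg|_{\grpd^{(0)}}$) and then extended to $\algalg\grpd$ by the continuity argument above. The part I expect to be the main obstacle is \emph{Step 2}: for a one-term candidate such as $u_g - u_{1_G}$, the set of $w\in B$ on which $g$ fails to strongly fix a prefix is determined by the first two letters of $w$ and is therefore open, so the resulting function is not singular; passing to the product of two such differences with $a,b$ the \emph{free} generators (so that $\tau(a),\tau(b)$ are linearly independent in $\ZZ^2$) is exactly what forces the pairwise cancellation on all words of length at least $2$ and collapses the support onto germs over single letters, and carrying out this $\tau$-bookkeeping carefully is the main technical work.
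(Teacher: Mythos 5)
Your Steps 1 and 2 essentially coincide with the paper's: the same scattering family $\{(h,D(\epsilon)) : h \in H_n\}$ with pairwise intersection $B=\bigcup_{y\in Y}D(y)$, the same decomposition of $\lambda_w$ restricted to the copy of $\CC H$ into trivial parts (on germs over $Y$-words) and free parts (elsewhere), and the same element $\fa$ with the same $\tau$-bookkeeping showing $\supp{\fa*\fb}=\{[1,y],[a,y],[b,y],[ab,y]\colon y\in Y\}$.

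Step 3, however, contains a fatal gap. You assert that $\fg|_{\grpd^{(0)}}$ is continuous for $\fg\in\algalg{\grpd}$ and locally constant for $\fg\in\steinalg{\grpd}$. This is false, and the failure is exactly the non-Hausdorff phenomenon the whole paper is about: a compact open bisection can meet the unit space in a set that is open but not closed. Concretely, for $h\in H\setminus\{1_G\}$ one has $(h,D(\epsilon))\cap\grpd^{(0)}=B$, so $\chi_{(h,D(\epsilon))}|_{\grpd^{(0)}}=\chi_B$, which is discontinuous at $\epsilon$; likewise $\chi_{(a,D(\epsilon))}|_{\grpd^{(0)}}$ equals $1$ at $y_n^1y_m^2$ for every $m$ but $0$ at $y_n^1$, so it is discontinuous at the very points of $Y$ your argument relies on. Moreover your argument proves too much: $\fa*\fb_n\in\steinalg{\grpd}$ converges to $\fa*\fb$ in the reduced (hence supremum) norm, so for large $n$ it is within $\frac12$ of $\fa*\fb$, takes values close to $1$ at each single-letter $y\in Y$ and close to $0$ at every longer unit $yw$; if restrictions to the unit space were continuous, no element of $\algalg{\grpd}$ could be sup-norm close to $\fa*\fb$ at all, contradicting this. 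The correct reason such a $\fg$ is non-singular is entirely different and is where the paper's real work lies: writing $\fg=\sum_s c_s\chi_{(s,D(s^*s))}$, nonvanishing at almost all (``typical'') $y_n^1$ forces $\sum_{g\in G_1}c_g\neq 0$ with $G_1=\ker\tau\cap\ker\zeta_1$, hence some coset sum $\sum_{\pi_1(g)=\pi_1(h)}c_g\neq 0$ over $Z_1=\ker\pi_1$; since $\pi_1$ governs the action on the letters $z_k^1$, this places an entire open bisection $(h,D(z_k^1))$ inside $\supp{\fg}$ for almost all $k\in K$ --- the open set sits over $Z$-words in the isotropy, not near $Y$ in the unit space. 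A further approximation lemma (comparing $\fg\in\algalg{\grpd}$ with a Steinberg-algebra element away from finitely many forbidden letters) is then needed to pass from $\steinalg{\grpd}$ to $\algalg{\grpd}$. None of this appears in your proposal.
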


We prove \cref{thm:alg-sing-minimal-effective} following the steps outlined in \cref{sec:strategy}. 

\subsection{Scattering.}
Since $H$ is a non-amenable group, by \cref{prop:nonamenable-groups-can-scatter} we can find some sequence $\{H_n\}_{n \in \NN}$ of finite non-empty subsets of $H$ such that the sequence $\frac{1}{\abs{H_n}}\sum_{h \in H_n} h$ tends to $0$ in $\redalg H$.
Identifying $H$ with its image under the natural embedding $H \subseteq G$, we may consider $\{H_n\}_{n \in \NN}$ as a sequence of subsets in $G$.
Now consider the sets of compact open bisections 
\begin{equation} \label{mini-eff-def-un}
  \CU_n \coloneqq \left\{\left(h, D\left(\epsilon\right)\right) : h \in H_n\right\}
\end{equation}
of $\grpd$. As \(D(\epsilon) = \grpd^{(0)}\), every bisection in every \(\CU_n\) has full source and range.

\begin{lemma} \label[lemma]{min-eff-h-intersects-in-B}
For any $h_1 \neq h_2$, we have $(h_1, D(\epsilon)) \cap (h_2, D(\epsilon))=\displaystyle{\bigcup_{y \in Y}} D(y)$.
\end{lemma}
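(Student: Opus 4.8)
The plan is to compute the intersection $(h_1, D(\epsilon)) \cap (h_2, D(\epsilon))$ germ-by-germ. A germ $[h_1, w]$ lies in this intersection precisely when $[h_1, w] = [h_2, w]$ in $\grpd$, which by the description of germs in \cref{subsec:selfsim_prelim} happens if and only if there is a prefix $\gamma$ of $w$ with $h_1 \gamma = h_2 \gamma$ in the inverse monoid $S$. Since $w$ is a word (finite or infinite) over the alphabet $X = Y \cup Z$, a nonempty prefix $\gamma$ begins with some letter $x \in X$, and using the identity $g\alpha = g(\alpha)g|_\alpha$ in $S$ we have $h_i \gamma = h_i(\gamma) h_i|_\gamma$.

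\begin{proof}
Fix $h_1 \neq h_2$ in $H$. A germ $[h_1, w] \in (h_1, D(\epsilon))$ lies in $(h_2, D(\epsilon))$ as well exactly when $[h_1, w] = [h_2, w]$, i.e.\ when there is a prefix $\gamma$ of $w$ with $h_1 \gamma = h_2 \gamma$ in $S$. If $w = \epsilon$ this would force $h_1 = h_2$, which is excluded, so $w$ must have a first letter $x \in X$; writing $\gamma = x\gamma'$ it suffices (and is necessary) that $h_1 x = h_2 x$ in $S$, since then $h_1 \gamma = h_1(x) h_1|_x \gamma' $ and $h_2 \gamma = h_2(x) h_2|_x \gamma'$ agree. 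Conversely, if no prefix works then in particular $h_1 x \neq h_2 x$.

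Now we use the self-similar action \eqref{def:self-similar-action}. Since $h_1, h_2 \in H$, we have $\zeta_1(h_i) = \zeta_2(h_i) = 0$ and $\tau(h_i) = 1_G$, so for a letter $y = y_n^1$ or $y = y_n^2$ we get $h_i(y) = y$ and $h_i|_y = \tau(h_i) = 1_G$; hence $h_1 y = y\, 1_G = h_2 y$ in $S$. On the other hand, for a letter $z = z_k^1$ we have $h_i(z) = z_{\pi_1(h_i)\cdot k}^1$, and since $h_1 \neq h_2$ in $H \leq K$ and $\pi_1$ is injective on $H$, the letters $h_1(z)$ and $h_2(z)$ differ, so $h_1 z \neq h_2 z$ in $S$; the same holds for $z = z_k^2$ using $\pi_2$. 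Therefore the set of $w$ for which $[h_1, w] = [h_2, w]$ is exactly the set of words beginning with a letter of $Y$, i.e.\ $\bigcup_{y \in Y} D(y)$, which gives $(h_1, D(\epsilon)) \cap (h_2, D(\epsilon)) = \bigcup_{y \in Y} D(y)$ as claimed.
\end{proof}

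The only mild subtlety — and the step I would be most careful about — is the reduction from "some prefix $\gamma$ of $w$ satisfies $h_1\gamma = h_2\gamma$" to "the first-letter prefix already satisfies it": this works because once $h_1 x = h_2 x$ holds for the first letter $x$, right-multiplying by the remaining word $\gamma'$ preserves the equality, while if $h_1 x \neq h_2 x$ then no \emph{longer} prefix can fix it either, since longer prefixes are obtained by further right multiplication and $S$ has no nontrivial cancellation pathologies forcing $h_1 x \gamma' = h_2 x \gamma'$ with $h_1 x \neq h_2 x$ when $x \in Z$ (indeed $h_1 x$ and $h_2 x$ then start with different letters). Everything else is a direct unwinding of the definitions.
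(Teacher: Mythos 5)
Your proof is correct and follows essentially the same route as the paper's: reduce germ equality to the existence of a prefix $\gamma$ with $h_1\gamma = h_2\gamma$ in $S$, then observe that $h_1 y = y = h_2 y$ for $y \in Y$ (since $\zeta_i(h)=0$ and $\tau(h)=1_G$ for $h \in H$) while $h_1 z \neq h_2 z$ for $z \in Z$ because $H$ acts freely on $Z$. Your extra care about why a longer prefix cannot repair a first-letter mismatch is exactly the point the paper compresses into the phrase ``since $H$ acts freely on $Z$, $h_1\gamma=h_2\gamma$ implies that $\gamma$ begins with $y$.''
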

\begin{proof}
By definition, the intersection $(h_1, D(\epsilon)) \cap (h_2, D(\epsilon))$ consists of germs $[h_1,w]$ where 
$$h_1(\gamma)h_1|_\gamma=h_1\gamma=h_2\gamma=h_2(\gamma)h_2|_\gamma$$
for some prefix $\gamma$ of $w$. Since $H$ acts freely on $Z$, $h_1\gamma=h_2\gamma$ implies that $\gamma$ (and therefore $w$) begins with $y$. Conversely for any $y \in Y$,  we have $h_iy=h_i(y)h_i|y=y$ $(i=1,2)$, hence $[h_1,w]=[h_2,w]=[1, w]$ whenever $w$ begins with $y$. It follows that $[h_1,w]=[h_2,w]$ if and only if $w \in {\bigcup_{y \in Y}} D(y)$, in which case $[h_1,w]=[h_2,w]=[1,w]=w$.
\end{proof}

Define
\begin{equation} \label{mini-eff-def-bn}
  B \coloneqq \displaystyle{\bigcup_{y \in Y}} D(y) \subseteq \grpd^{(0)}; \; C \coloneqq \displaystyle{\bigcup_{z \in Z}} D(z) \subseteq \grpd^{(0)}; \; \text{ and } \; \fb_n \coloneqq \frac{1}{\abs{\CU_n}} \sum_{U \in \CU_n} \chi_{U} = \frac{1}{\abs{H_n}} \sum_{h \in H_n} \chi_{\left(h, D\left(\epsilon\right)\right)}.
\end{equation}
The following is the analogue of \cref{cauchy-bundle-groups} in this setting.
\begin{proposition} \label[proposition]{cauchy-minimal-effective}
The sequence $\{\fb_n\}_{n \in \NN}$ is Cauchy in $\redalg\grpd$. In fact, it converges to \(\fb \coloneqq \chi_B\).
\end{proposition}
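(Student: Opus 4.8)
The plan is to mimic the proof of \cref{cauchy-bundle-groups} (the bundle-of-groups case) but adapt it to the self-similar groupoid $\grpd$, where the left-regular representations $\lambda_w$ no longer split into group representations of a single isotropy group. Concretely, I want to show $\{\lambda_w(\fb_n)\}_{n \in \NN}$ is Cauchy uniformly in $w \in \grpd^{(0)}$, and that $\supnorm{\fb_n - \chi_B} \to 0$; then \cref{cauchy-plus-convergent} gives both that the limit lies in $\redalg\grpd$ and that it equals $\chi_B$.

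\textbf{Step 1: the supremum-norm estimate.} This is the easier half and should be essentially identical to Step 1 of the general strategy in \cref{sec:strategy}. Fix a germ $\gamma = [h, w] \in \grpd$. If $w$ begins with some $y \in Y$, then by the computation in \cref{min-eff-h-intersects-in-B} every $h \in H$ fixes $y$ and satisfies $hy = y$, so $\chi_{(h,D(\epsilon))}(\gamma) = 1$ for all $h \in H_n$, whence $\fb_n(\gamma) = 1$; in particular $\fb_n|_B \equiv 1$. If instead $\gamma \notin \bigcup_{y}D(y)$ — i.e. $w$ begins with some $z \in Z$ (or $w$ is empty, handled similarly) — then since $H$ acts freely on $Z$, at most one $h \in H_n$ can satisfy $\chi_{(h,D(\epsilon))}(\gamma)\neq 0$, so $\abs{\fb_n(\gamma)} \le 1/\abs{H_n}$. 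Since $\abs{H_n} \to \infty$ (the averages $\frac{1}{\abs{H_n}}\sum h$ converge to $0$ in $\redalg H$, which forces $\abs{H_n}\to\infty$), we get $\supnorm{\fb_n - \chi_B} \to 0$.

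\textbf{Step 2: the Cauchy estimate in the reduced norm.} Fix $w \in X^*\cup X^\omega$ and analyze $\lambda_w(\fb_n)$ on $\ell^2(\grpd w)$. The point is that $\grpd w$ decomposes according to the first letter of $w$. If $w$ begins with a letter in $Y$, then as above each $\chi_{(h,D(\epsilon))}$ acts on $\delta_\gamma$ (for $\gamma \in \grpd w$) in a way that only depends on how $h$ moves the $Y$-prefix; because the $Y$-part of the section map is governed by $\zeta_1,\zeta_2$ which kill $H$, I expect $\lambda_w(\chi_{(h,D(\epsilon))})$ to act trivially enough that $\lambda_w(\fb_n)$ is (eventually) constant in $n$, hence Cauchy — this mirrors equations \eqref{eqna:bundle_itemx} and \eqref{eqna:bundle_itemy}. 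If $w$ begins with a letter in $Z$, then $H$ acts freely (via $\pi_1$ or $\pi_2$) on the relevant copy of $Z$, and I expect $\lambda_w$ restricted to the $Z$-part to intertwine with a multiple of the regular representation of $H$ (or of $G$); then $\norm{\lambda_w(\fb_n)}$ on that part equals (or is dominated by) the norm of $\frac{1}{\abs{H_n}}\sum_{h\in H_n}h$ in $\redalg H$, which is Cauchy by hypothesis. The remaining contribution comes from germs $\gamma\in\grpd w$ whose own source word begins with $Y$ even though $w$ begins with $Z$ — but $\chi_{(h,D(\epsilon))}$ only ever produces germs $[h\gamma', w]$ with range word $h(w)$, so bookkeeping which basis vectors $\delta_{h\gamma}$ appear and with what coefficients, and checking the sum over $h\in H_n$ really does assemble into $\lambda_H\big(\frac{1}{\abs{H_n}}\sum h\big)$ up to a spatial isomorphism, is the crux. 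Uniformity in $w$ is automatic once the two cases above give bounds depending only on $n$ (through $\norm{\frac{1}{\abs{H_n}}\sum h}_{\redalg H}$ and $1/\abs{H_n}$), and not on $w$.

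\textbf{The main obstacle} I anticipate is exactly this last bookkeeping in Step 2: unlike the bundle-of-groups case, a single $\lambda_w$ sees the whole orbit of $w$, which threads through both $Y$-type and $Z$-type words, and the self-similarity means sections $g|_\gamma$ can be nontrivial ($\tau(g)$ on $Y$-letters). I would handle this by first reducing, via the orbit structure and the explicit description of germs, to understanding $\lambda_w(\chi_{(h,D(\epsilon))})$ as a partial isometry permuting an orthonormal basis of $\ell^2(\grpd w)$ indexed by germs, then grouping basis vectors into $H$-invariant ``blocks'' on which the averaged operator $\lambda_w(\fb_n)$ restricts to (a conjugate of) $\lambda_H\big(\frac{1}{\abs{H_n}}\sum_{h\in H_n}h\big)\otimes \id$ on the $Z$-supported blocks and to the identity on the $Y$-supported blocks. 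Once that block decomposition is in place, $\norm{\lambda_w(\fb_n) - \lambda_w(\fb_m)} \le \norm{\frac{1}{\abs{H_n}}\sum_{h\in H_n}h - \frac{1}{\abs{H_m}}\sum_{h\in H_m}h}_{\redalg H}$ uniformly in $w$, so $\{\fb_n\}$ is Cauchy in $\redalg\grpd$, and \cref{cauchy-plus-convergent} combined with Step 1 identifies the limit as $\chi_B$.
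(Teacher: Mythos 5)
Your final block decomposition is exactly the paper's argument, so the core of the proposal is sound: the paper partitions $\grpd w = B\grpd w \sqcup C\grpd w \sqcup \epsilon\grpd w$ according to where the \emph{range} of each arrow lies, checks each piece is invariant under $\lambda_w(\chi_{(h,D(\epsilon))})$, and shows the resulting subrepresentations of $H$ are trivial on $\ell^2(B\grpd w)$ and weakly contained in $\lambda_H$ on $\ell^2(C\grpd w)$ and $\ell^2(\epsilon\grpd w)$, yielding $\norm{\ff}_{\redalg{\grpd}} \leq \max\{\norm{\pitriv(\ff)},\norm{\lambda_H(\ff)}\}$ for $\ff$ in the copy of $\stein{H}$; your Step 1 likewise matches the paper's appeal to \cref{min-eff-h-intersects-in-B} and the scattering computation. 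Three corrections to how you get there. First, the opening case split of Step 2 on the first letter of $w$ is a dead end, not merely a bookkeeping nuisance: by minimality \emph{every} $\grpd w$ contains arrows whose range starts with a $Y$-letter and arrows whose range starts with a $Z$-letter, so $\lambda_w(\fb_n)$ is never eventually constant in $n$ (it always has norm at least $1$, coming from the $B$-range block) and is never dominated by $\norm{\frac{1}{\abs{H_n}}\sum_h h}_{\redalg{H}}$ alone; the relevant first letter is that of $r(\gamma)$ for each $\gamma \in \grpd w$, independently of what $w$ begins with. Your last paragraph states this correctly, so the earlier casework should simply be discarded. Second, your blocks omit the arrows with range $\epsilon$, which are present exactly when $w$ is a finite word; on that block $H$ acts by a copy of its left regular representation, so the estimate survives, but it must be accounted for. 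Third, your claimed uniform bound $\norm{\lambda_w(\fb_n-\fb_m)} \leq \norm{\ff_n-\ff_m}_{\redalg{H}}$ holds only because $\pitriv(\ff_n)=1$ for every $n$, so the identity blocks cancel in the difference; this cancellation is precisely why the trivial-representation contribution (which by itself prevents $\fb_n \to 0$) is harmless, and it should be made explicit, as in the paper's $\max\{\norm{\pitriv(\cdot)},\norm{\lambda_H(\cdot)}\}$ bound.
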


The proof of \cref{cauchy-minimal-effective} is somewhat long, so we divide it into smaller, more digestible pieces.
In the following, whenever we write \([\alpha g \beta^*, w] \in \grpd\) we are implicitly assuming that the germ is defined, that is, that \(\beta \in X^*\) is a finite prefix of the (possibly infinite) word \(w \in \grpd^{(0)}\).

\begin{lemma} \label[lemma]{lemma:y-z-epsilon-well-behaved-under-h}
  Let $w \in \grpd^{(0)}$ and consider the sets $C \grpd w, B \grpd w,  \epsilon \grpd w \subseteq \grpd w$. The following assertions hold.
  \begin{enumerate}
    \item \label{lemma:y-z-epsilon-well-behaved-under-h:1} \(C \grpd w, B \grpd w\) and \(\epsilon \grpd w\) partition \(\grpd w\).
    \item \label{lemma:y-z-epsilon-well-behaved-under-h:2} \(\epsilon \grpd w\) is non-empty if and only if \(w\) is a finite word.
    \item \label{lemma:y-z-epsilon-well-behaved-under-h:3} 
    The sets \(B \grpd w, C \grpd w\) and \(\epsilon \grpd w\) are invariant under the \editB{left} action of $(h, D(\epsilon))$
    for all \(h \in H \subseteq G\).
    \item \label{lemma:D(z)} Any arrow of $D(z)\grpd w$ can be expressed in the form $[z\alpha g \beta^*, w]$ with $z \in Z$.
  \end{enumerate}
\end{lemma}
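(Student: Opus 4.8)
The plan is to obtain all four assertions by unwinding the definition of the self-similar action \eqref{def:self-similar-action} and of the germ groupoid $\grpd$, with two preliminary observations carrying most of the load. First, directly from \eqref{eq:convention about AB}, for any $A \subseteq \grpd^{(0)}$ one has $A\grpd w = \{\gamma \in \grpd w : r(\gamma) \in A\}$, since a unit is composable with $\gamma$ precisely when it equals $r(\gamma)$. Second, the homomorphisms $\zeta_1, \zeta_2$ and $\tau$ all vanish on the subgroup $H \leq G$, whereas $\pi_i$ restricts on $H$ to the inclusion $H \hookrightarrow K$; reading this off \eqref{def:self-similar-action}, every $h \in H$ \emph{strongly fixes} each letter of $Y$ (that is, $h(y) = y$ and $h|_y = 1_G$ for $y \in Y$) and satisfies $h(z^i_k) = z^i_{hk}$ with $h|_{z^i_k} = 1_G$. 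Consequently $h$ fixes the set $B$ pointwise, maps $C$ bijectively onto itself, and fixes the empty word $\epsilon$.

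Parts \ref{lemma:y-z-epsilon-well-behaved-under-h:1} and \ref{lemma:y-z-epsilon-well-behaved-under-h:2} then follow quickly. Every unit $u \in \grpd^{(0)} = X^* \cup X^\omega$ is either $\epsilon$, or begins with a letter of $Y$, or begins with a letter of $Z$, and these cases are mutually exclusive and exhaustive because $Y \cap Z = \emptyset$ and a word has a unique first letter; hence $B$, $C$ and $\{\epsilon\}$ partition $\grpd^{(0)}$, and applying $r^{-1}$ gives \ref{lemma:y-z-epsilon-well-behaved-under-h:1}. For \ref{lemma:y-z-epsilon-well-behaved-under-h:2}, write an arrow of $\grpd w$ as $[\alpha g\beta^*, w]$ with $\beta$ a prefix of $w$, say $w = \beta v$; its range is $\alpha g(v)$, which is the empty word if and only if $\alpha = \epsilon$ and $v = \epsilon$ (the action preserves lengths), \emph{i.e.}\ precisely when $w = \beta$ is finite, whereas conversely $[w^*, w] \in \epsilon\grpd w$ whenever $w \in X^*$.

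For \ref{lemma:y-z-epsilon-well-behaved-under-h:3}, observe that $(h, D(\epsilon)) = \{[h, u] : u \in \grpd^{(0)}\}$ is a bisection with full source and range, and that left-multiplying $\gamma \in \grpd w$ by its unique element of source $r(\gamma)$ produces $[h, r(\gamma)]\,\gamma \in \grpd w$ with range $h(r(\gamma))$. By the preliminary observation, $h(r(\gamma))$ lies in $B$, lies in $C$, or equals $\epsilon$ exactly when $r(\gamma)$ does; so left-multiplication by $(h, D(\epsilon))$ carries each of $B\grpd w$, $C\grpd w$, $\epsilon\grpd w$ into itself, and running the same argument with $h^{-1} \in H$ supplies the reverse inclusions.

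Finally, for \ref{lemma:D(z)}, let $\gamma \in D(z)\grpd w$, so $r(\gamma) \in D(z)$ for the letter $z \in Z$; write $r(\gamma) = zu''$. The canonical bisection of $z \in S$, namely $(z, D(\epsilon)) = \{[z, v] : v \in \grpd^{(0)}\}$, contains the arrow $[z, u'']$ of range $zu'' = r(\gamma)$, whence $\gamma = [z, u'']\cdot\gamma'$ with $\gamma' \coloneqq [z, u'']^{-1}\gamma \in \grpd w$ of range $u''$. Picking any representative $\gamma' = [\alpha g\beta^*, w]$, with $\beta$ a prefix of $w$, and using the identity $(z 1_G \epsilon^*)(\alpha g\beta^*) = z\alpha g\beta^*$ in $S$ gives $\gamma = [z\alpha g\beta^*, w]$, as required. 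I expect no serious obstacle here; the step most prone to error will be the correct extraction from \eqref{def:self-similar-action} of the restricted $H$-action on $X$ underlying the second preliminary observation, together with the composability checks and semigroup-level identities behind the germ manipulations in \ref{lemma:y-z-epsilon-well-behaved-under-h:3} and \ref{lemma:D(z)}.
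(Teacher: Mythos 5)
Your proof is correct and, for parts (1)--(3), follows essentially the same route as the paper: the partition of $\grpd^{(0)}$ into $\{\epsilon\}\sqcup B\sqcup C$ pulled back along the range map, length-preservation for (2), and the observation that the range of $(h,D(\epsilon))\gamma$ is $h(r(\gamma))$ together with the fact that $h\in H$ fixes $B$ pointwise, permutes $C$, and fixes $\epsilon$ -- which is exactly the content of the paper's case analysis on $\alpha$, just packaged more conceptually. For part (4) you take a genuinely different (and slightly cleaner) path: the paper argues directly on a representative $[\gamma g\delta^*,w]$, splitting into the cases $\gamma\in D(z)$ and $\gamma=\epsilon$ and then rewriting via the semigroup identity $g\delta^*\delta x = zg|_x x^*\delta^*\delta x$, whereas you factor the arrow as $[z,u'']\gamma'$ through the canonical bisection of $z$ and compose germs; both are valid, and your version avoids the explicit semigroup manipulation at the cost of invoking the germ composition rule.
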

\begin{proof}
  The proof of (\ref{lemma:y-z-epsilon-well-behaved-under-h:1}) immediately follows from the fact that \(\grpd^{(0)} = \{\epsilon\} \sqcup B \sqcup C\). 

  For (\ref{lemma:y-z-epsilon-well-behaved-under-h:2}), note that
  if \(w \in X^*\) is a finite word then \([1_G w^*, w] \in \epsilon \grpd w\),
  and conversely $X^\omega$ is invariant under the action of $S$, so \(\epsilon \grpd w\) is empty whenever $w$ is infinite.

  Let us continue onto (\ref{lemma:y-z-epsilon-well-behaved-under-h:3}). Fix let \(h \in H\), and let $[\alpha g \beta^*, w] \in \grpd w$ be given. Observe that $r([\alpha g \beta^*, w])=\alpha g(\beta^*w)$, furthermore
  \[
    \left(h, D\left(\epsilon\right)\right) \, \left[\alpha g \beta^*, w\right] 
    = [h\cdot\alpha g \beta^*, w]=
     \left[h\left(\alpha\right) h|_{\alpha} g \beta^*, w\right],
  \]
  where $r([h\left(\alpha\right) h|_{\alpha} g \beta^*, w])=h(\alpha)h|_{\alpha} g (\beta^* w)$.
  Since the actions of $g$ and $h$ are length-preserving, we have 
  $$\alpha g(\beta^*w)=\epsilon \iff \alpha=\beta^*w=\epsilon \iff h(\alpha)g(\beta^*w)=\epsilon,$$
  which shows the invariance of $\epsilon \grpd w$. Secondly, we have $\alpha g(\beta^*w) \in B$ if either $\alpha=\epsilon$ and $g(\beta^*w) \in B$, in which case $h(\alpha)g(\beta^*w)= g(\beta^*w)\in B$ as well; or $\alpha \in B$, in which case $h(\alpha) \in B$ as well, since $h$ fixes the first letter of any nonempty word whose first letter is in \(B\). This shows the invariance of $B$. The invariance of $C$ follows from the fact that \(h\) (or, equivalently, \((h, D(\epsilon))\)) defines a homeomorphism of \(\grpd^{(0)} = \{\epsilon\} \sqcup B \sqcup C\).
  
  For the last part, let $[\gamma g \delta^*, w] \in D(z)\grpd w$, that is, suppose $r([\gamma g \delta, w])=\gamma g(\delta^* w) \in D(z)$. Then either $\gamma \in D(z)$, in which case $[\gamma g \delta^*, w]$ is already in the required form, or $\gamma=\epsilon$ and $g(\delta^* w) \in D(z)$, in which case $w=\delta x w'$ where $g(x)=z$. Then 
  $$g\delta^*\delta x=zg|_x=zg|_xx^*\delta^*\delta x,$$
  therefore $[g\delta^*, w]=[zg|_xx^*\delta^*, w]$.
\end{proof}

Recall the definition of $\lambda_w$ from \cref{sec:prelim}, and note that by \cite{BenNora21}*{Thm. 2.11} there is an embedding $\CC (H) \to \steinalg\grpd \subseteq \algalg\grpd$, $h \mapsto \chi_{(h, D(\epsilon))}$, which allows us to consider $\CC (H)$ as a subalgebra of $\redalg \grpd$.
A consequence of \cref{lemma:y-z-epsilon-well-behaved-under-h} is the following.
\begin{lemma} \label[lemma]{lemma:y-z-epsilon-well-behaved-under-h-projections}
  Let \(p_{Y,w}\), $p_{Z,w}$, $p_{\epsilon,w}$  be the orthogonal projections from $\ell^2(\grpd w)$ to $\ell^2(B \grpd w)$, $\ell^2(C \grpd w)$, $\ell^2(\epsilon \grpd w)$ respectively. Then these are mutually orthogonal projections in $\CB(\ell^2(\grpd w))$ such that
  \begin{enumerate}
    \item \label{lemma:y-z-epsilon-well-behaved-under-h-projections:1} \(p_{Y,w} + p_{Z,w} + p_{\epsilon,w}\) is the identity of \(\CB(\ell^2(\grpd w))\);
    \item \label{lemma:y-z-epsilon-well-behaved-under-h-projections:2} \(p_{\epsilon,w} \neq 0\) if and only if \(w\) is a finite word; and
    \item \label{lemma:y-z-epsilon-well-behaved-under-h-projections:3} \(\lambda_w(\chi_{(h, D(\epsilon))})\) commutes with \(p_{Y,w}, p_{Z,w}\) and \(p_{\epsilon,w}\) for all \(h \in H\).
  \end{enumerate}
\end{lemma}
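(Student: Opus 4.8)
The plan is to obtain \cref{lemma:y-z-epsilon-well-behaved-under-h-projections} as a routine $\ell^2$-level reformulation of \cref{lemma:y-z-epsilon-well-behaved-under-h}, treating the three items in turn. The starting point is that, by part (\ref{lemma:y-z-epsilon-well-behaved-under-h:1}) of that lemma, the sets $B\grpd w$, $C\grpd w$ and $\epsilon\grpd w$ partition the index set $\grpd w$, so that
\[
	\ell^2(\grpd w) = \ell^2(B\grpd w) \oplus \ell^2(C\grpd w) \oplus \ell^2(\epsilon\grpd w)
\]
is an internal orthogonal direct sum, and $p_{Y,w}$, $p_{Z,w}$, $p_{\epsilon,w}$ are precisely the three coordinate projections of this decomposition.

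With this in hand, item (\ref{lemma:y-z-epsilon-well-behaved-under-h-projections:1}) is immediate, since the coordinate projections of an orthogonal direct sum are automatically mutually orthogonal and add up to the identity of $\CB(\ell^2(\grpd w))$. Item (\ref{lemma:y-z-epsilon-well-behaved-under-h-projections:2}) is equally short: $p_{\epsilon,w} \neq 0$ if and only if $\ell^2(\epsilon\grpd w) \neq 0$, i.e.\ $\epsilon\grpd w \neq \emptyset$, which by part (\ref{lemma:y-z-epsilon-well-behaved-under-h:2}) of \cref{lemma:y-z-epsilon-well-behaved-under-h} holds exactly when $w$ is a finite word.

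The one item carrying any content is (\ref{lemma:y-z-epsilon-well-behaved-under-h-projections:3}). Fix $h \in H$. I would first unwind the definition of the regular representation $\lambda_w$ from \cref{sec:prelim}: because $(h, D(\epsilon))$ is a compact open bisection with full source and range (as $D(\epsilon) = \grpd^{(0)}$), for each $\gamma \in \grpd w$ there is a unique arrow of $(h, D(\epsilon))$ with source $r(\gamma)$, namely the germ $[h, r(\gamma)]$, so that $\lambda_w(\chi_{(h, D(\epsilon))})\delta_\gamma = \delta_{[h, r(\gamma)]\gamma}$. In other words $\lambda_w(\chi_{(h, D(\epsilon))})$ is the permutation unitary of $\ell^2(\grpd w)$ implementing the bijection $\gamma \mapsto (h, D(\epsilon))\gamma$ of $\grpd w$ obtained by left multiplication by the bisection. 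By part (\ref{lemma:y-z-epsilon-well-behaved-under-h:3}) of \cref{lemma:y-z-epsilon-well-behaved-under-h} this bijection maps each of $B\grpd w$, $C\grpd w$, $\epsilon\grpd w$ onto itself, so the corresponding unitary preserves each of the subspaces $\ell^2(B\grpd w)$, $\ell^2(C\grpd w)$, $\ell^2(\epsilon\grpd w)$, and hence commutes with each of $p_{Y,w}$, $p_{Z,w}$, $p_{\epsilon,w}$.

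The main (and only genuine) obstacle is the bookkeeping inside item (\ref{lemma:y-z-epsilon-well-behaved-under-h-projections:3}): correctly identifying $\lambda_w(\chi_{(h, D(\epsilon))})$ as the basis permutation induced by left multiplication by the full bisection $(h, D(\epsilon))$, and matching this with the concrete description of that action used in the proof of \cref{lemma:y-z-epsilon-well-behaved-under-h}. Once that identification is made, all three commutation relations drop out simultaneously from the invariance statements already proved there, and no estimates or further analysis are needed.
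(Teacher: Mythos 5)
Your proof is correct and matches the paper's intent exactly: the paper gives no separate argument, simply recording the lemma as a direct consequence of \cref{lemma:y-z-epsilon-well-behaved-under-h}, and your write-up supplies precisely the routine details (the orthogonal decomposition of \(\ell^2(\grpd w)\) along the partition, and the identification of \(\lambda_w(\chi_{(h,D(\epsilon))})\) as the permutation unitary induced by left multiplication by the full bisection, which preserves each invariant subspace and hence commutes with the projections).
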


Recalling \eqref{mini-eff-def-bn}, it follows from \cref{lemma:y-z-epsilon-well-behaved-under-h-projections} that 
\begin{equation} \label{eq:lambda-w-fb-n-decomposes}
  \lambda_w\left(\fb_n\right) = \lambda_w\left(\fb_n\right) p_{Y,w} + \lambda_w\left(\fb_n\right) p_{Z,w} + \lambda_w\left(\fb_n\right) p_{\epsilon,w}.
\end{equation}
This allows us to decompose \(\lambda_w\) into sub-representations (\emph{only}) on the part of $\algalg\grpd$ coming from \(\CC (H)\) (which is sufficient for the scattering).
Consider the map
\[
  \pi_{Y,w} \colon H \to \CB\left(\ell^2\left(B \grpd w\right)\right), \; \text{ where } \; \pi_{Y,w}\left(h\right) \coloneqq \lambda_w\left(\chi_{\left(h,D(\epsilon)\right)}\right) \, p_{Y,w}.
\]
Similarly, we may define \(\pi_{Z,w}(h) \coloneqq \lambda_w(\chi_{(h,D(\epsilon))}) p_{Z,w}\) and \(\pi_{\epsilon,w}(h) \coloneqq \lambda_w(\chi_{(h,D(\epsilon))}) p_{\epsilon,w}\).

\begin{lemma} \label[lemma]{min-eff-norm-bound-above}
  The maps \(\pi_{Y,w}, \pi_{Z,w}\) and \(\pi_{\epsilon,w}\) are representations of \(H\). Moreover, the following hold.
  \begin{enumerate}
    \item \label{min-eff-norm-bound-above:1} \(\pi_{Y,w}(h) = 1 \in \CB(\ell^2(B \grpd w))\) for all \(h \in H\); and
    \item \label{min-eff-norm-bound-above:2} \(\pi_{Z,w}\) and \(\pi_{\epsilon,w}\) are weakly contained in \(\lambda_H \colon H \to \CB(\ell^2(H))\) (the left regular representation of \(H\)).
  \end{enumerate}
  In particular,
  \[
    \norm{\ff}_{\redalg{\grpd}} \stackrel{\rm (def)}{=} \sup_{w \in \grpd^{(0)}} \norm{\lambda_w\left(\ff\right)} \leq \max \left\{\norm{\pitriv (\ff)}, \norm{\lambda_H(\ff)}\right\}
  \]
  for all \(\ff \in \stein H \subseteq \redalg{\grpd}\).
\end{lemma}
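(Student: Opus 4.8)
The plan is to deduce everything from the permutation action of $H$ on the source fibre $\grpd w$ given by $h\cdot[t,w]=[ht,w]$: since $(h,D(\epsilon))$ is a bisection and $D(\epsilon)=\grpd^{(0)}$, the operator $\lambda_w(\chi_{(h,D(\epsilon))})$ is exactly the unitary permutation $\delta_\gamma\mapsto\delta_{h\cdot\gamma}$ of the orthonormal basis of $\ell^2(\grpd w)$. Granting this, the fact that $\pi_{Y,w},\pi_{Z,w},\pi_{\epsilon,w}$ are (unitary) representations of $H$ is formal: by \cref{lemma:y-z-epsilon-well-behaved-under-h-projections} each of $p_{Y,w},p_{Z,w},p_{\epsilon,w}$ commutes with $\lambda_w(\chi_{(h,D(\epsilon))})$ for every $h$, so $\pi_{\bullet,w}$ is simply the restriction of the unitary representation $h\mapsto\lambda_w(\chi_{(h,D(\epsilon))})$ to the invariant subspace $\ell^2(B\grpd w)$, $\ell^2(C\grpd w)$, resp.\ $\ell^2(\epsilon\grpd w)$ (multiplicativity, unitality and unitarity are then automatic, using that $\chi_{(1_H,D(\epsilon))}=\chi_{\grpd^{(0)}}$ acts as the identity).

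For part (1), I would first observe that every $h\in H$ \emph{strongly} fixes each letter of $Y$: from the definition of the action, $\zeta_1(h)=\zeta_2(h)=0$ gives $h(y_n^i)=y_n^i$, and $\tau(h)=1_G$ gives $h|_{y_n^i}=1_G$, so $hy=y$ in $S$ for all $y\in Y$. Hence whenever $r(\gamma)$ begins with a letter of $Y$ (equivalently $\gamma\in B\grpd w$), the germ $[h,r(\gamma)]$ already coincides with the unit $r(\gamma)$, so $h\cdot\gamma=\gamma$. Therefore $\pi_{Y,w}(h)=1\in\CB(\ell^2(B\grpd w))$.

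For part (2), the key claim is that $H$ acts \emph{freely} on $C\grpd w$ and on $\epsilon\grpd w$. For $\epsilon\grpd w$: this fibre is empty unless $w$ is a finite word by part (2) of \cref{lemma:y-z-epsilon-well-behaved-under-h}, and when $w$ is finite a short computation with $w^*w=1$ in $S$ identifies $\epsilon\grpd w=\{[gw^*,w]:g\in G\}$, with $g\mapsto[gw^*,w]$ a bijection intertwining the $H$-action with left multiplication of $H$ on $G$; freeness is then immediate since $H\le G$. For $C\grpd w$: by part (4) of \cref{lemma:y-z-epsilon-well-behaved-under-h} every arrow has the form $[z_k^i\alpha g\beta^*,w]$ with $z_k^i\in Z$, and since $h|_{z_k^i}=1_G$ one gets $h\cdot[z_k^i\alpha g\beta^*,w]=[z_{\pi_i(h)k}^i\alpha g\beta^*,w]$, i.e.\ $h$ only relabels the leading letter via $k\mapsto\pi_i(h)k$; comparing normal forms in $S$ shows this germ equals the original one only if $\pi_i(h)k=k$, and since $\pi_i|_H$ is injective this forces $h=1_H$. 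A free permutation representation of $H$ is a (possibly countably infinite) multiple $\bigoplus_I\lambda_H$ of the left regular representation, which is weakly contained in $\lambda_H$ (and the empty-fibre case gives the zero representation, trivially weakly contained); this is exactly (2).

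The ``in particular'' estimate is then routine. By part (1) of \cref{lemma:y-z-epsilon-well-behaved-under-h-projections}, for $\ff\in\stein H$ we have the orthogonal decomposition $\lambda_w(\ff)=\pi_{Y,w}(\ff)\oplus\pi_{Z,w}(\ff)\oplus\pi_{\epsilon,w}(\ff)$, hence $\norm{\lambda_w(\ff)}=\max\{\norm{\pi_{Y,w}(\ff)},\norm{\pi_{Z,w}(\ff)},\norm{\pi_{\epsilon,w}(\ff)}\}$. Writing $\ff=\sum_h c_h h$, part (1) gives $\pi_{Y,w}(\ff)=\big(\sum_h c_h\big)\cdot 1$, so $\norm{\pi_{Y,w}(\ff)}=\abs{\sum_h c_h}=\norm{\pitriv(\ff)}$, while part (2) and the definition of weak containment give $\norm{\pi_{Z,w}(\ff)},\norm{\pi_{\epsilon,w}(\ff)}\le\norm{\lambda_H(\ff)}$; taking the supremum over $w\in\grpd^{(0)}$ yields the claimed inequality. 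I expect the only genuine work to lie in part (2) — namely setting up the identification $\epsilon\grpd w\cong G$ and justifying the normal-form comparison for the germs $[z_k^i\alpha g\beta^*,w]$, since that is where the actual structure of the self-similar action (injectivity of the $\pi_i$ on $H$, and $h|_{z_k^i}=1_G$) is used; everything else is bookkeeping.
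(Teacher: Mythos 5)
Your proposal is correct and follows the paper's proof in all essentials: the same decomposition into the three invariant subspaces via \cref{lemma:y-z-epsilon-well-behaved-under-h-projections}, the same observation that $H$ strongly fixes every letter of $Y$ for part (1), and the same identification $\epsilon\grpd w\cong G$ for the $\pi_{\epsilon,w}$ case. The one place you diverge is the weak containment of $\pi_{Z,w}$: the paper decomposes $p_{Z,w}=\sum_z p_{z,w}$, builds an explicit unitary onto $\ell^2(K)\otimes\ell^2(\ZZ_2)\otimes\ell^2(\grpd w)$ intertwining $\pi_{Z,w}$ with $\lambda_K\otimes\pitriv$, and then invokes Fell's absorption principle, whereas you prove directly that the $H$-action on $C\grpd w$ is free (using the normal form $[z_k^i\alpha g\beta^*,w]$ and injectivity of $\pi_i|_H$) and quote the general fact that a free permutation representation is a multiple of $\lambda_H$. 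Both arguments rest on exactly the same structural inputs (the leading letter of the range is a complete invariant that $h$ permutes freely), so this is a repackaging rather than a new idea; your version is marginally more elementary in that it avoids Fell absorption, while the paper's intertwiner makes the multiplicity structure explicit. No gaps.
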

\begin{proof}
  The fact that \(\pi_{Y,w}, \pi_{Z,w}\) and \(\pi_{\epsilon,w}\) are all representations follows from the fact that
  $h \mapsto \chi_{(h, D(\epsilon))}$ embeds $\stein{H}$ into $\algalg\grpd$, that
   \(\lambda_w\) is a representation and that \(\lambda_w(\chi_{(h, D(\epsilon))})\) commutes with \(p_{Y,w}, p_{Z,w}\) and \(p_{\epsilon,w}\) (see \cref{lemma:y-z-epsilon-well-behaved-under-h-projections}).

  In order to prove (\ref{min-eff-norm-bound-above:1}), let \([\alpha g \beta^*, w] \in B \grpd w\) be given: in particular $r([\alpha g \beta^*, w])=yw'$ for some $y \in Y$ and $w'=X^\ast \cup X^\omega$.
  Then
  \[
    \pi_{w,Y}\left(h\right) \delta_{\left[ \alpha g \beta^*, w\right]} \stackrel{\rm (def)}{=} \lambda_w\left(\chi_{\left(h, D\left(\epsilon\right)\right)}\right) \delta_{\left[ \alpha g \beta^*, w\right]} = \delta_{\left(h, D\left(\epsilon\right)\right) \left[ \alpha g \beta^{*}, w\right]} = 
    \delta_{[h, yw']\left[\alpha g\beta^*, w\right]}.
  \]
  Now note that $hy=h(y)h|_y=y$, so $[h,yw']=[1,yw']$, in particular $[h, yw']\left[\alpha g\beta^*, w\right]=[\alpha g \beta^*, w]$.

  Item (\ref{min-eff-norm-bound-above:2}) follows from analogous ideas, so let us first consider the case of \(\pi_{\epsilon,w}\).
  We claim that \(\pi_{\epsilon,w}\) is either \(0\) or unitarily equivalent to the left regular representation \(\lambda_H \colon H \to \CB(\ell^2(H))\).
  In fact, by \cref{lemma:y-z-epsilon-well-behaved-under-h-projections} we may assume that \(w \in X^*\) is a finite word, for it is was not then \(p_{\epsilon,w} = 0\), and thus \(\pi_{\epsilon, w} = 0\) would indeed be weakly contained anywhere.
  Observe that \(\epsilon \grpd \epsilon = \{[g, \epsilon] : g \in G\}\), and that these elements are pairwise different, since $g\epsilon=g$ for all $g \in G$.
  In particular it follows that there is a bijection \(\epsilon \grpd w \ni [g w^*, w] \mapsto g \in G\) which, in turn, induces a unitary operator \(U_{\epsilon,w} \colon \ell^2(\epsilon \grpd w) \to \ell^2(G)\) intertwining the representation \(\pi_{\epsilon,w}\) and \(\lambda_H\) (which we see as a sub-representation of \(\lambda_G\)).
  Indeed, the diagram
  \begin{center}
    \begin{tikzcd}
      \ell^2\left(\epsilon \grpd w\right) \arrow[r]{}{U_{\epsilon,w}} \arrow[d]{}{\pi_{\epsilon,w}\left(h\right)} & \ell^2\left(G\right) \arrow[d]{}{\lambda_H\left(h\right)} \\
      \ell^2\left(\epsilon \grpd w\right) \arrow[r]{}{U_{\epsilon,w}} & \ell^2\left(G\right)
    \end{tikzcd}
  \end{center}
  commutes for all \(h \in H\), for \(U_{\epsilon,w} \pi_{\epsilon,w}(h) \delta_{[gw^*,w]} =
  U_{\epsilon,w}\delta_{[hgw^*,w]}= \delta_{hg} = \lambda_H(h) \delta_g=\lambda_H U_{\epsilon,w}\delta_{[gw^*,w]}\).

  The case for \(\pi_{Z,w}\) is more involved. Recall that \(K = H \times F \times \ZZ\), and that \(Z = \{z_k^1 : k \in K\} \cup \{z_k^2 : k \in K\}\).
  Also, see \eqref{def:self-similar-action} (cf.\  \cref{fig:tree-drawings}) for the definition of the action of \(H\) on \(Z\), which is free:
  \begin{equation} \label{eq:how h acts on z}
    h\left(z_k^1\right) = z_{h k}^1; \quad h\left(z_k^2\right) = z_{h k}^2; \quad h|_{z^1_k} = h|_{z^2_k} = 1_H = 1_G 
  \end{equation}
  for all \(h \in H \subseteq G\) (since \(\pi_i(h) = h\), see \cref{fig:maps}).
  Moreover, observe that \(p_{Z,w} = \sum_{z \in Z} p_{z,w}\) (where the sum is in the strong operator topology), and \(p_{z,w}\) is defined to be the orthogonal projection from \(\ell^2(C \grpd w)\) onto \(\ell^2(D(z) \grpd w)\) (similarly to before).
  These are mutually orthogonal projections, since they correspond to the sets \(D(z) \grpd w\), which partition \(C \grpd w\), for \(C = \bigcup_{z \in Z} D(z)\) by definition.
  Recall that $D(z)$ consists of arrows of the form $[z\alpha g\beta^*,w]$ by \cref{lemma:y-z-epsilon-well-behaved-under-h} (\ref{lemma:D(z)}).
  In addition, it follows from \eqref{eq:how h acts on z} that
  \[
    \lambda_w\left(\chi_{\left(h, D\left(\epsilon\right)\right)}\right) \, p_{z^i_k,w} = p_{z^i_{hk},w} \, \lambda_{w}\left(\chi_{\left(h, D\left(\epsilon\right)\right)}\right)
  \]
    and
  \[
  \lambda_w\left(\chi_{\left(h, D\left(\epsilon\right)\right)}\right) \, \delta_{\left[z_k^i \alpha g \beta^*, w\right]} = \delta_{\left[h z_k^i \alpha g \beta^*, w\right]} = \delta_{\left[h\left(z_k^i\right) \alpha g \beta^*, w\right]} = \delta_{\left[z_{hk}^i \alpha g \beta^*, w\right]}
  \]
  for all \(z^i_k \in Z\), $\left[z_{k}^i \alpha g \beta^*, w\right] \in D(z^i_k)\grpd w$ and \(h \in H\).
  
  Now observe that $[z_1\alpha_1 g_1 \beta_1^*, w]=[z_2\alpha_2 g_2 \beta_2^*, w]$ if and only if $z_1=z_2$ and $[\alpha_1 g_1 \beta_1^*, w]=[\alpha_2 g_2 \beta_2^*, w]$. It follows that there is a bijection 
  \(C \grpd w \ni [z_k^i \alpha g \beta^*, w] \mapsto (k, i, [\alpha g\beta^*,w]) \in K \times \ZZ_2 \times \grpd w,\)
  which in turn induces a unitary \(U\) making the diagram
  \begin{center}
    \begin{tikzcd}
      \ell^2\left(C \grpd w\right) \arrow[r]{}{U} \arrow[d]{}{\pi_{Z,w}\left(h\right)} & \ell^2\left(K\right) \otimes \ell^2\left(\ZZ_2\right) \otimes \ell^2\left(\grpd w\right) \arrow[d]{}{\lambda_K\left(h\right) \otimes \pitriv\left(h\right)} \\
      \ell^2\left(C \grpd w\right) \arrow[r]{}{U} & \ell^2\left(K\right) \otimes \ell^2\left(\ZZ_2\right) \otimes \ell^2\left(\grpd w\right)
    \end{tikzcd}
  \end{center}
  commute for all \(h \in H\).
  It follows that \(\pi_{Z,w}\) is unitarily equivalent to a subrepresentation of \(\lambda_K \otimes \pitriv\), which is weakly contained in \(\lambda_K\) by Fell's absorption principle.

  For the last statement of the lemma, note that it follows from (\ref{min-eff-norm-bound-above:1}) and (\ref{min-eff-norm-bound-above:2}) that
  \begin{equation} \label{eq:min-eff:bounds-norms}
    \norm{\pi_{Y,w}\left(\ff\right)} \stackrel{(\ref{min-eff-norm-bound-above:1})}{=} \norm{\pitriv\left(\ff\right)}; \quad \norm{\pi_{Z,w}\left(\ff\right)} \stackrel{(\ref{min-eff-norm-bound-above:2})}{\leq} \norm{\lambda_H\left(\ff\right)}; \quad \text{ and } \quad \norm{\pi_{\epsilon,w}\left(\ff\right)} \stackrel{(\ref{min-eff-norm-bound-above:2})}{\leq} \norm{\lambda_H\left(\ff\right)}
  \end{equation}
  for all \(\ff \in \stein H\) and \(w \in \grpd^{(0)}\).
  Thus, the ``in particular'' statement follows routinely:
  \begin{align*}
    \norm{\ff}_{\redalg{\grpd}} & \stackrel{\rm (def)}{=} \sup_{w \in \grpd^{(0)}} \norm{\lambda_w\left(\ff\right)} \stackrel{\rm \eqref{eq:lambda-w-fb-n-decomposes}}{=} \sup_{w \in \grpd^{(0)}} \norm{\lambda_w\left(\ff\right) \left(p_{Y,w} + p_{Z,w} + p_{\epsilon,w}\right)} \\
    & \stackrel{\rm (\cref{lemma:y-z-epsilon-well-behaved-under-h-projections})}{=} \, \max_{\star \in \left\{Y, Z, \epsilon\right\}} \; \sup_{w \in \grpd^{(0)}} \norm{\pi_{\star,w}\left(\ff\right)} \stackrel{\rm \eqref{eq:min-eff:bounds-norms}}{\leq} \max \left\{\norm{\pitriv (\ff)}, \norm{\lambda_H(\ff)}\right\}
  \end{align*}
  for all \(\ff \in \stein H \subseteq \redalg{\grpd}\).
\end{proof}

\begin{proof}[Proof of \cref{cauchy-minimal-effective}]
  We must prove the sequence \(\{\fb_n\}_{n \in \NN}\) is Cauchy in the reduced norm, that is,
  \begin{equation} \label{eq:min-effe-have-to-show}
    \norm{\fb_n - \fb_m} = \sup_{w \in \grpd^{(0)}} \norm{\lambda_w\left(\fb_n-\fb_m\right)} \xrightarrow{n, m \to \infty} 0.
  \end{equation}
  If we define
  \[
    \ff_n \coloneqq \frac{1}{\abs{\CU_n}} \sum_{h \in H_n} h \in \stein H
  \]
  for all \(n \in \NN\), then the embedding of $\CC (H) \to \redalg \grpd$ via $h \mapsto \chi_{(h,D(\epsilon))}$ sends $\ff_n$ to $\fb_n$ by \eqref{mini-eff-def-bn}, 
   and the ``in particular'' statement in \cref{min-eff-norm-bound-above} yields that 
  \[
    \norm{\fb_n - \fb_m} \leq \max \left\{\norm{\pitriv (\ff_n - \ff_m)}, \norm{\lambda_H(\ff_n - \ff_m)}\right\}.
  \]
  Note that \(\pitriv(\ff_n) = 1\) for all \(n \in \NN\). Moreover, \(\norm{\ff_n}_{\redalg H} \to 0\) by \cref{prop:nonamenable-groups-can-scatter}. In particular, \eqref{eq:min-effe-have-to-show} follows, so \(\{\fb_n\}_{n \in \NN}\) indeed converges to some \(\fb \in \redalg{\grpd}\).
  Thus, by \cref{cauchy-plus-convergent}, the proof will be complete once we show that \(\fb_n\) converges to \(\chi_B\) in the supremum norm. This now follows from \cref{min-eff-h-intersects-in-B} and Step 1 of \cref{sec:strategy}.
\end{proof}

\subsection{Finding a \texorpdfstring{$B$}{B}-singular element.}
The $B$-singular element will be defined with the help of $F = F(a,b)$, which we identify with its image under the natural embedding $F \hookrightarrow G$. Let
\[
  \fa \coloneqq \chi_{(1, D(\epsilon))}-\chi_{(a, D(\epsilon))}-\chi_{(b, D(\epsilon))}+\chi_{(ab, D(\epsilon))} \in \stein\grpd.
\]

\begin{lemma} \label[lemma]{lemma:min-eff-b-singular}
The function $\fa$ is $B$-singular, that is, $\fa \ast \chi_B$ is singular.
\end{lemma}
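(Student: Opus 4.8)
The plan is to identify $\fa \ast \chi_B \in \redalg\grpd$ explicitly as a function on $\grpd$ and read off that its support is contained in the isotropy over the length-one words of $Y$, which has empty interior.

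First I would observe that $\fa \ast \chi_B$, viewed as a function on $\grpd$ via $j$, is nothing but the restriction of $\fa$ to $\grpd B = \{\gamma \in \grpd : s(\gamma) \in B\}$; that is, $(\fa \ast \chi_B)(\gamma) = \fa(\gamma)\,\chi_B(s(\gamma))$ for every germ $\gamma \in \grpd$. Indeed, $\chi_B = \lim_n \fb_n$ in $\redalg\grpd$ by \cref{cauchy-minimal-effective}, hence $\fa \ast \chi_B = \lim_n \fa \ast \fb_n$ in $\redalg\grpd$, and since $\supnorm{\cdot} \leq \norm{\cdot}$ this convergence is also uniform, in particular pointwise. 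For a fixed $\gamma$ the convolution sum $(\fa \ast \fb_n)(\gamma) = \sum_{\alpha\beta = \gamma} \fa(\alpha)\fb_n(\beta)$ has only finitely many nonzero summands: $\supp{\fa}$ is covered by the four bisections $(g, D(\epsilon))$ with $g \in \{1,a,b,ab\}$, and each such bisection meets the set of arrows with range $r(\gamma)$ in a single point. These summands are uniformly bounded and $\fb_n(\beta) \to \chi_B(\beta)$ uniformly, so one may pass to the limit inside the finite sum; as $\chi_B(\beta) \neq 0$ forces $\beta$ to equal the unit $s(\gamma) \in B$, this yields the claimed formula.

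Next I would evaluate $\fa$ at an arbitrary germ lying over $B$. If $\gamma \in \supp{\fa}$ with $s(\gamma) \in B$, then $\gamma = [g, w]$ with $g \in \{1,a,b,ab\} \subseteq F$ and $w = yu$ for some $y \in Y$, $u \in X^* \cup X^\omega$, and, writing $\epsilon_1 = \epsilon_{ab} = 1$ and $\epsilon_a = \epsilon_b = -1$, one has $\fa([g,w]) = \sum\{\epsilon_{g'} : g' \in \{1,a,b,ab\},\ [g', w] = [g, w]\}$. The crux is to describe germ equality here. By \eqref{def:self-similar-action}, every $g' \in F$ fixes the first letter $y$ with section $\tau(g') \in \ZZ^2$, while an element of $\ZZ^2$ acts on a word of ``type $i$'' (meaning its first letter is among the $y^i_n$ or $z^i_k$, $i \in \{1,2\}$) by translating the first letter via $\zeta_i$ and leaving a trivial section behind. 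Tracking this through the definition of germs, $[g', w] = [g, w]$ holds exactly when either $u = \epsilon$ and $\tau(g') = \tau(g)$, or $u \neq \epsilon$ and $\zeta_{i_0}(\tau(g')) = \zeta_{i_0}(\tau(g))$, where $i_0 \in \{1,2\}$ is the type of the first letter of $u$. Since $\tau$ is injective on $\{1,a,b,ab\}$, the first case gives $\fa([g,w]) = \epsilon_g \neq 0$; in the second case $\epsilon_{g'}$ equals $(-1)$ to the power of the sum of the two coordinates of $\tau(g') \in \{0,1\}^2$, and fixing the $i_0$-th coordinate leaves the other free to range over $\{0,1\}$, so the sum telescopes to $0$. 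Consequently $\fa \ast \chi_B$ vanishes on every germ whose source has length $> 1$, and $\supp{\fa \ast \chi_B} = \{[g, y] : g \in \{1,a,b,ab\},\ y \in Y\}$.

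Finally I would note that this set has empty interior in $\grpd$: a nonempty open subset of $\grpd$ contains a basic compact open bisection $(t, U)$ with $U$ a nonempty basic compact open subset of $X^* \cup X^\omega$, and any such $U$ --- obtained from some $D(\alpha)$ by removing finitely many cylinders, while $X$ is infinite --- contains words of length $> 1$, so $(t, U)$ contains germs whose source has length $> 1$ and hence cannot lie inside $\supp{\fa \ast \chi_B}$. Therefore $\fa \ast \chi_B$ is singular. I expect the real work to be the middle step: carefully deriving the germ-equality criterion from the formulas for the action and for the restriction maps, keeping the cases $u = \epsilon$, $u$ finite nonempty, and $u$ infinite properly separated.
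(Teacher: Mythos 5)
Your proposal is correct and follows essentially the same route as the paper: identify $\fa\ast\chi_B$ as the restriction of $\fa$ to $\grpd B$, compute germ equalities among $[1,yu],[a,yu],[b,yu],[ab,yu]$ via the sections $\tau(g)\in\ZZ^2$ and their action on the second letter, observe the pairwise cancellation for $u\neq\epsilon$ (your $\zeta_{i_0}$-fibre/parity argument is exactly the paper's multiset identity $\{1yx,abyx\}=\{ayx,byx\}$), and conclude that the support $\{[g,y]:g\in\{1,a,b,ab\},\,y\in Y\}$ has empty interior. The germ-equality criterion you flag as the "real work" is stated correctly and checks out.
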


\begin{proof}
We describe $\supp{\fa \ast \chi_B}$. Observe that $\fa \ast \chi_B=\chi_{(1, B)}-\chi_{(a, B)}-\chi_{(b, B)}+\chi_{(ab, B)}$, so all we need to understand is where these (non-compact) open bisections overlap, that is, which germs of the form $[1,yw], [a,yw], [b,yw], [ab,yw]$ coincide for $y \in Y$ and $w \in X^\ast \cup X^\omega$.
Recall that $[s,w]=[t,w]$ if and only if $s\gamma=t\gamma$ holds in $S$ for some prefix $\gamma$ of $w$.

For any $y \in Y$, by \eqref{def:self-similar-action} we have the following equalities in $S$, where as usual we identify $\ZZ \times \ZZ$ with its image in $G$:
\begin{equation}
\label{eq:product-with-y}
1y=y1,\ ay=y(1,0), \ by=y(0,1), \text{ and } aby=y(1,1).
\end{equation}
In particular, the germs $[1,y], [a,y], [b,y], [ab,y]$ are pairwise distinct and hence these germs are all in the support of $\fa \ast \chi_B$.

Now to see what happens for germs at longer words starting with $y$, we look at four cases depending on the second letter. Observe that for any $n \in \ZZ$ and $k \in K$, we have:
\begin{equation}
\label{eq:product-after-y}
\begin{array}{cc}
1y^1_n=y^1_n=(0,1)y^1_n; & \quad (1,0)y^1_n=y^1_{n+1}=(1,1)y^1_n; \vspace{0.5em}\\
1y^2_n=y^2_n=(1,0)y^2_n; & \quad (0,1)y^2_n=y^2_{n+1}=(1,1)y^2_n; \vspace{0.5em}\\
1z^1_k=z^1_k=(0,1)z^1_k; & \quad (1,0)z^1_n=z^1_{(1_H, 1_F, 1)\cdot k}=(1,1)z^1_k; \vspace{0.5em}\\
1z^2_k=z^2_k=(1,0)z^2_k; & \quad (0,1)z^2_n=z^2_{(1_H, 1_F, 1)\cdot k}=(1,1)z^2_k.  
\end{array}
\end{equation}
Combining \eqref{eq:product-with-y} and \eqref{eq:product-after-y} it follows that for any $y \in Y$ and $x \in X$, we have
$$\{1yx, abyx\}=\{ayx, byx\}.$$
In particular, for any non-empty (finite or infinite) word $w$, we have
$$\{[1,yw], [ab,yw]\}=\{[a,yw], [b,yw]\},$$
and as such $\fa \ast \chi_B$ cancels on all of these germs. It follows that 
$$\supp{\fa \ast \chi_B}=\{[1,y], [a,y], [b,y], [ab,y]\colon y\in Y\},$$
which has empty interior indeed.
\end{proof}


We have $\fa * \chi_B = \fa * \fb \in \singideal$; what remains to show is that it is not approximated by algebraic singular functions.

\subsection{ Showing \texorpdfstring{$\fa * \fb$}{a*b} is not in the closure of \texorpdfstring{$\asingideal$}{Jalg}}
Following the strategy outlined in \cref{sec:strategy}, we prove that if $\ff \in \steinalg \grpd$ is such that 
$\supp {\fa * \fb} \subseteq \supp \ff$, then $\ff \notin \singideal$. 
Recall that $\supp{\fa \ast \fb}=\{[1,y], [a,y], [b,y], [ab,y]\colon y\in Y\}$ (as is shown in the proof \cref{lemma:min-eff-b-singular}).
Therefore, in particular, $Y \subseteq \supp \ff$. \editB{\cref{thm:alg-sing-minimal-effective} then follows by \cite[Thm. 6.3]{GH2025}.}

\editB{In the following, whenever we say ``almost all'' we mean ``for all but, at most, finitely many''.}

\begin{lemma} \label[lemma]{lem:selfsim-stein}
Let $\ff \in \stein \grpd$ be such that $y \in \supp \ff$ for almost all $y \in Y$.
Then there exists $h \in G$ such that the compact open bisection $(h, D(z^1_k)) \subseteq \supp \ff$ for almost all $k \in K$.

In particular, if $\ff \in \steinalg \grpd$ is such that  $\supp {\fa * \fb} \subseteq \supp \ff$, then $\ff \notin \singideal$.
\end{lemma}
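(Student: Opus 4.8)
The plan is to read off enough structural information about $\ff$ from its values on the units $y^1_n \in Y$ and then transfer that information along $Z^1$, exploiting the precise shape of the action. By \cref{prop:stein description} I would begin by fixing a finite expansion $\ff = \sum_{i=1}^{N} c_i\,\chi_{(\alpha_i g_i \beta_i^{*},\,D(\beta_i))}$ with $c_i \neq 0$, $\alpha_i,\beta_i \in X^{*}$ and $g_i \in G$.

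\emph{Step A (reading $\ff$ off on $Y$).} Fix $n$ and compute $\ff(y^1_n) = \ff([1, y^1_n])$. As $y^1_n$ has length one, $y^1_n \in D(\beta_i)$ forces $\beta_i \in \{\epsilon, y^1_n\}$, and the second option can occur for at most one $n$ per term, so for all but finitely many $n$ every term contributing to $\ff(y^1_n)$ has $\beta_i = \epsilon$. For such a term, $[\alpha_i g_i, y^1_n] = [1, y^1_n]$ means $\alpha_i g_i \gamma = \gamma$ for a prefix $\gamma \in \{\epsilon, y^1_n\}$ of $y^1_n$; using the identity $g_i y^1_n = y^1_{\zeta_1(g_i)+n}\,\tau(g_i)$ in $S$ and comparing lengths, this holds exactly when $\alpha_i = \epsilon$, $\zeta_1(g_i) = 0$ and $\tau(g_i) = 1_G$. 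Setting $I_1 \coloneqq \{\,i : \alpha_i = \beta_i = \epsilon,\ \zeta_1(g_i) = 0,\ \tau(g_i) = 1_G\,\}$, this gives $\ff(y^1_n) = \sum_{i \in I_1} c_i$ for all but finitely many $n$; since by hypothesis $\ff(y^1_n) \neq 0$ for almost all $n$, we conclude $\sum_{i \in I_1} c_i \neq 0$.

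\emph{Step B (transferring to $Z^1$ and concluding).} Fix $h \in G$ and a word $w'$, and compute $\ff([h, z^1_k w'])$. Comparing ranges, a term can contribute only if $r([\alpha_i g_i \beta_i^{*}, z^1_k w']) = r([h, z^1_k w']) = z^1_{\pi_1(h)\cdot k}\,w'$. If $\beta_i \neq \epsilon$, then $z^1_k w' \in D(\beta_i)$ forces $\beta_i$ to begin with $z^1_k$, which happens for at most one $k$ per term, so for all but finitely many $k$ (independently of $w'$) every contributing term has $\beta_i = \epsilon$. Using $g_i|_{z^1_k} = 1_G$ and $g_i(z^1_k) = z^1_{\pi_1(g_i)\cdot k}$, the range condition then becomes $\alpha_i = \epsilon$ and $\pi_1(g_i) = \pi_1(h)$, and conversely these conditions do give $[g_i, z^1_k w'] = [h, z^1_k w']$. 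Hence, writing $L_\kappa \coloneqq \{\,i : \alpha_i = \beta_i = \epsilon,\ \pi_1(g_i) = \kappa\,\}$ for $\kappa \in K$, one obtains $\ff([h, z^1_k w']) = \sum_{i \in L_{\pi_1(h)}} c_i$ for all but finitely many $k$ and \emph{every} word $w'$. The crucial observation is that the conditions defining $I_1$ depend on $g_i$ only through $\pi_1(g_i) \in K = H \times F \times \ZZ$: indeed $\zeta_1(g_i) = 0$ is the vanishing of the $\ZZ$-coordinate of $\pi_1(g_i)$, and $\tau(g_i) = 1_G$ says its $F$-coordinate lies in $[F,F]$. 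Thus $I_1$ is a disjoint union of sets $L_\kappa$, and $\sum_{i \in I_1} c_i \neq 0$ forces $\sum_{i \in L_\kappa} c_i \neq 0$ for some $\kappa \in K$. Choosing $h \in G$ with $\pi_1(h) = \kappa$ yields $\ff([h, z^1_k w']) \neq 0$ for all but finitely many $k$ and all $w'$, that is, $(h, D(z^1_k)) \subseteq \supp{\ff}$ for almost all $k \in K$, which is the first assertion. For the ``in particular'' statement, the description of $\supp{\fa\ast\fb}$ in the proof of \cref{lemma:min-eff-b-singular} shows that every unit $y \in Y$ lies in $\supp{\fa\ast\fb}$; hence $\supp{\fa\ast\fb} \subseteq \supp{\ff}$ gives $Y \subseteq \supp{\ff}$, and the first part then places a nonempty compact open bisection $(h, D(z^1_k))$ inside $\supp{\ff}$, so $\supp{\ff}$ has nonempty interior and $\ff \notin \singideal$.

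I expect the difficulty to lie in bookkeeping rather than in any single idea: one must identify the index set $I_1$ in Step A precisely enough to recognise it as a union of the index sets $L_\kappa$ that control $\ff$ along $Z^1$ in Step B. This matching is exactly what the design of the self-similar action engineers — the cancellations that make $\fa\ast\fb$ singular over $Y$, which are governed by $\zeta_1$ and $\tau$, are forced to remain visible over the open set $\bigcup_{k\in K} D(z^1_k)$, which is governed by $\pi_1$; so no $\ff \in \stein{\grpd}$ whose support contains $\supp{\fa\ast\fb}$ can be singular.
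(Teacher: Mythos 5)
Your proposal is correct and follows essentially the same route as the paper: expand $\ff$ in the Steinberg basis, use "typical" indices in $Y^1$ to extract a nonzero sum of coefficients over $\ker\tau\cap\ker\zeta_1$, observe that this set is a union of fibres of $\pi_1$ (equivalently $\ker\pi_1\subseteq\ker\tau\cap\ker\zeta_1$), and transfer the resulting nonzero fibre-sum to the bisections $(h,D(z^1_k))$. Your identification of $I_1$ as a disjoint union of the sets $L_\kappa$ is exactly the paper's coset decomposition of $G_1$ by $Z_1=\ker\pi_1$.
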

\begin{proof}
Suppose $\ff$ satisfies the conditions of the lemma, and by \cref{prop:stein description} write
$$\ff=\sum_{s \in S\setminus \{0\}} c_s \chi_{(s, D(s^*s))},$$
where only finitely many coefficients are nonzero. (Recall that if $s=\alpha g \beta^*$, then $D(s^*s)=D(\beta)$.)
We call an integer $n \in \ZZ$ \emph{typical} if for $i=1,2$ we have $y_n^i\in \supp \ff$ and, furthermore, $c_s=0$ for any $s=\alpha g\beta^*$ where $\beta$ begins with $y_n^i$. Observe that almost all indices in $\ZZ$ are typical by definition.

Let $n \in \ZZ$ be a typical index. For $i=1,2$, we have
\begin{equation}
\label{eq:nonzero1}
0 \neq \ff([1, y_n^i]) = \sum_{[s,y_n^i]=[1, y_n^i]} c_s,
\end{equation}
where the equality follows from \cite{BenNora21}*{Corollary 2.12} (or can be verified by definition). We remark that the condition $[s,y_n^i]=[1, y_n^i]$ is of course understood to imply that $[s,y_n^i]$ is defined, \emph{i.e.}\ that $y_n^i \in D(s^*s)$.
For $s=\alpha g\beta^*$, we have $y_n^i \in D(s^*s)=D(\beta)$ exactly if either $\beta=\epsilon$ or $\beta=y_n^i$ -- but in the latter case, $c_s=0$ since $n$ is typical. It follows that 
if $c_s \neq 0$, then 
\begin{equation}
\label{eq:vivalent-conds}
[s,y_n^i]=[1, y_n^i] \iff s y_n^i=y_n^i \iff s=g \in G \hbox{ with }\tau(g)=1_G \hbox{ and } \zeta_i(g)=0.
\end{equation}
Let
$G_i=\ker \tau \cap \ker \zeta_i \normal G$, and let 
$Z_i=\ker \pi_i \normal G$. Observe (see \cref{fig:maps}) that $Z_i \subseteq G_i$.
From \eqref{eq:nonzero1} and \eqref{eq:vivalent-conds} applied to the case when $i=1$, we obtain
\begin{equation*}
0 \neq \sum_{g \in G_1} c_g=\sum_{\Delta \in G_1/Z_1}\sum_{g \in \Delta} c_g,
\end{equation*}
in particular there must be at least one $\Delta \in G_1/Z_1 \leq G/Z_1$ such that $\sum_{g \in \Delta} c_g \neq 0$. Put $\Delta=hZ_1$ for some $h \in G_1$. Then for $g \in G$, we have $g \in hZ_1$ if and only if $\pi_1(g)=\pi_1(h)$, and so
\begin{equation}
\label{eq:nonzero2}
0 \neq \sum_{g \in hZ_1} c_g= \sum_{\pi_1(g)=\pi_1(h)} c_g.
\end{equation}
\editB{We call $k \in K$ \emph{typical} if for $i=1,2$, we have $c_s=0$ for any $s=\alpha g\beta^*$ where $\beta$ begins with $z_k^i$. Again, almost all indices in $K$ are typical.}
Now fix a typical index $k \in K$ and let $w \in X^* \cup X^\omega$, and consider
\begin{equation*}
  \ff\left(\left[h, z_k^1w\right]\right) = \sum_{[s,z_k^1w]=[h, z_k^1w]} c_s.
\end{equation*}
We aim to show that this sum is nonzero. If $c_s\neq 0$, then as before, the typicality of $k$ implies that $z_k^1w \in D(ss^*)$ holds if and only if $s$ is of the form $\alpha g$. For such $s$, we have  $[\alpha g,z_k^1w]=[h, z_k^1w]$ if and only if $\alpha g z_k^1\gamma=hz_k^1\gamma$ for some prefix $\gamma$ of $w$, which can only hold if $\alpha=\epsilon$. In this case, 
\begin{equation*}
  g z_k^1\gamma=z^1_{\pi_1(g)\cdot k}\gamma;\ \ hz_k^1\gamma=z^1_{\pi_1(h)\cdot k}\gamma,
\end{equation*}
so in summary we have obtained that for any $s$ with $c_s \neq 0$, $[s,z_k^1w]=[h, z_k^1w]$ holds if and only if $s=g$ with $\pi_1(g)=\pi_1(h)$. Then combining with \eqref{eq:nonzero2}, we deduce
\begin{equation*}
  \ff\left(\left[h, z_k^1w\right]\right) = \sum_{\pi_1(g)=\pi_1(h)} c_g \neq 0,
\end{equation*}
which shows that the open set $(h, D(z^1_k))$ is contained in $\supp \ff$.
\end{proof}

\bibliography{bibgrpdsing}

\end{document}